\newtheorem{theorem}{Theorem}[section]
\newtheorem{lemma}[theorem]{Lemma}
\newtheorem{proposition}[theorem]{Proposition}
\newtheorem{question}[theorem]{Question}
\newtheorem{definition}[theorem]{Definition}
\numberwithin{equation}{section}
\begin{document}

\newcommand{\cc}{\mathfrak{c}}
\newcommand{\bb}{\mathfrak{b}}
\newcommand{\N}{\mathbb{N}}
\newcommand{\Q}{\mathbb{Q}}
\newcommand{\R}{\mathbb{R}}
\newcommand{\K}{\mathbb{K}}
\newcommand{\E}{\mathbb{E}}
\newcommand{\UE}{\mathbb{U}\mathbb{E}}
\newcommand{\RN}{\mathbb{RN}}
\newcommand{\QRN}{\mathbb{QRN}}
\newcommand{\CM}{\mathbb{CM}}

\newcommand{\C}{\mathbb{C}}
\newcommand{\PP}{\mathbb{P}}
\newcommand{\forces}{\Vdash}
\newcommand{\dom}{\text{dom}}
\newcommand{\osc}{\text{osc}}

\title{Universal objects and associations between classes of Banach spaces
and  classes of compact  spaces}

\author{Piotr Koszmider}
\thanks{The  author was partially supported by the National Science Center research grant 2011/01/B/ST1/00657. } 

\address{Institute of Mathematics, Polish Academy of Sciences,
ul. \'Sniadeckich 8,  00-956 Warszawa, Poland}
\email{\texttt{P.Koszmider@impan.pl}}

%\author{}

%\address{d}

%\email{}

\subjclass{}
\date{}
\keywords{}

\begin{abstract} 
In the context of classical associations between classes of  Banach spaces and 
classes of compact Hausdorff spaces we survey known results
and open questions concerning the existence and nonexistence of universal Banach spaces
and of universal compact spaces  in various classes. This gives
quite a  complex network of
interrelations  which quite often depend on additional set-theoretic assumptions.
\end{abstract}
\maketitle

\markright{Universal objects and associations}

\section{Introduction}

In this survey note, we would like to look at universal objects in several classes of compact spaces and 
several classes of Banach spaces. 
All Banach spaces are over the reals and are assumed to be infinite dimensional. All
compact spaces are assumed to be Hausdorff and infinite.

In section 2. we use the Stone duality between totally disconnected compact Hausdorff spaces with continuous maps and Boolean algebras
with homomorphisms as a motivation for seeing the  interactions between compact
Hausdorff spaces with continuous maps and Banach spaces with linear bounded operators. We interpret
what is known in functional analysis as permanence properties in the light of the Stone duality.
We define several notions of associated classes of Banach spaces and compact spaces (associated,
$K$-associated, $B$-associated and strongly associated).
We show elementary facts about the relationships among theses notions and note natural ways of defining 
associated classes.  

In section 3. we list the classes of compact spaces we will be concerned with: uniform Eberlein, Eberlein, Corson, Corson with property M,
Radon-Nikod\'ym, Quasi Radon-Nikod\'ym; and classes of Banach spaces: Hilbert generated, weakly compactly generated, weakly Lindel\"of,
weakly Lindel\"of detremined, Asplund generated, subspaces of Asplund generated.
We distinguish subclasses of given density character. We also state known relationships  among these classes.

In section 4. we describe various types of universality (surjective, injective, isomorphic, isometric, weak) and prove
elementary results on how the existence of universal objects for some class gives the existence of the universal objects
in an associated class depending on the type of the association and type of universality. 
We note that unless we are in the case
of weakly universal Banach spaces, the existence of various types of universal compact spaces imply the
existence of universal Banach spaces but apparently not vice versa.  In particular all 
present in the literature proofs of results on the nonexistence of universal Banach spaces are considerably harder
than the results on the nonexistence of universal compact spaces.

Finally in section 5. we list in a systematic way known results and open problems on the existence and nonexistence of
universal compact spaces and Banach spaces in the classes introduced in section 3. We stress general patterns along
the various notions of association and duality from section 2., in the context of various notions of universality from section 4.

In section 6 we mention known results concerning the universality of $l_\infty/c_0$.  Throughout the paper 
we  mention many questions which seem to be open.
For undefined notions we refer the reader to the standard textbooks like \cite{fabianetal, koppelberg, negrepontis, engelking}. We are grateful to Antonio Avil\'es for conversations which allowed the author to improve this
paper.

\section{Associations between compact Hausdorff spaces and Banach spaces}

As a motivation for the associations between compact spaces and Banach spaces
we would like to propose the Stone duality between compact totally disconnected spaces and
Boolean algebras. This duality consists of two standard constructions of  contravariant functors.

Recall that to each Boolean algebra $\mathcal A$ we can associate
a totally disconnected compact space 
$$K_{\mathcal A}=\{h: h\  \hbox{\rm is a homomorphism from}\  \mathcal A \ \hbox{\rm into}\  \{0,1\}\}$$
with the smallest topology making all evaluations of elements of $K_{\mathcal A}$
at  a fixed element of $\mathcal A$ continuous. This topology is called the Stone topology (see e.g. \cite{koppelberg}).
Then, to every homomorphism of Boolean algebras $g:\mathcal B\rightarrow \mathcal A$ we
associate a continuous map
$\phi_g: K_{\mathcal A}\rightarrow K_{\mathcal B}$ given for every $h\in K_{\mathcal A}$ by
$$\phi_g(h)=h\circ g.$$
This way, by a standard argument, $g$ is onto if and only if $\phi_g$ is an injection and vice versa.
Also to each compact totally disconnected space $K$ we can associate a Boolean algebra
$$\mathcal A_K=\{\phi: \phi \ \hbox{\rm is a continuous function from}\ K\  \hbox{\rm into}\ \{0,1\}\}$$
with the operations of taking the minimum, taking the maximum and the difference with the constant function equal to one.  Given a continuous function $\psi: L\rightarrow K$, where $K$ and $L$ are
compact totally disconnected, we can define $h_\psi:\mathcal A_K\rightarrow \mathcal A_L$ by
$$h_\psi(\phi)=\phi\circ \psi.$$
We have similar relations among surjections and injections.
The Stone duality gives us that $K_{\mathcal A_L}$ is homeomorphic to
$L$ for every totally disconnected compact $K$ and $\mathcal A_{K_{\mathcal B}}$ is isomorphic to
$\mathcal B$ for every Boolean algebra establishing a complete correspondence between the two categories which is
elegant and very fruitful for both the topology and Boolean algebras.  
Taking continuous images of totally disconnected compact spaces corresponds to taking Boolean subalgebras
and taking closed subspaces corresponds to taking quotients of Boolean algebras.

Now let us see what happens if in an analogous way, instead of a Boolean algebra, we associate 
with a given compact Hausdorff space $K$ 
a Banach space.
We consider
$$C(K)=\{f: f\  \hbox{\rm is a continuous function from}\ K\   \hbox{\rm into}\ \R\}$$
with the pointwise linear operations. 
Given a continuous function $\psi: L\rightarrow K$ where $K$ and $L$ are
compact  we can define
$$T_\psi(f)=f\circ \psi .$$
We have similar relations among surjections and injections as in the case of Boolean algebras.
Now, to each Banach space $X$ we associate the dual ball with the weak$^*$ topology, that is
$$B_{X^*}=\{x^*: x^*\  \hbox{\rm is linear and bounded map from}\  X \ \hbox{\rm into}\  \R\ \hbox{\rm 
with}\  ||x^*||\leq 1\}$$
with the smallest topology  making all evaluations of elements of $B_{X^*}$
at a fixed  element of $X$ continuous. This topology is called the weak$^*$ topology (see e.g. \cite{fabianetal} for details).
Here the analogy to the Stone duality ends, because given a linear bounded operator
$T: Y\rightarrow X$, if we
define
$T^*: X^*\rightarrow Y^*$ by
$$T^*(x^*)=x^*\circ T,$$
for every $x^*\in X^*$  there is no reason why $T^*$ restricted to $B_{X^*}$ has to end up in $B_{Y^*}$ unless
$T$ is an isometry. Now we have two options, either consider only isometries, but this is not what is done in
Banach space theory, or consider the duality between $X^*$ and $Y^*$ but then we loose compactness and all of its privileges.

The third way\footnote{For possible functors
 between various categories of compact spaces and various categories of Banach spaces see \cite{semadenifunktor}.} which is behind the theory discussed in this paper is to note that
$T^*$ is continuous with respect to the weak$^*$ topologies, so the image of $B_{X^*}$ must be
compact, and so bounded in $Y^*$,
hence there is $n\in \N$ such that $T^*[B_{X^*}]\subseteq n B_{Y^*}$ and of course $n B_{Y^*}$ 
is a homeomorphic copy of $n B_{Y^*}$.  Then some analogies with the 
behaviour of a contravariant functor can be saved:

\begin{proposition}\label{dualities}
Suppose $K,L$ are compact spaces, $\psi$ is a continuous map, $X, Y$ are Banach spaces
and $T$ is a linear operator.
\begin{enumerate}
\item If $K$ is a continuous image of $L$, then $C(K)$ is isometric to a subspace of $C(L)$
\item If $L$ is a subspace of $K$, then $C(L)$ is isometric to a quotient of $C(K)$
\item If $X$ is a subspace of $Y$, then $B_{X^*}$ is a continuous image of $B_{Y^*}$
\item If $X$ is isomorphic to a quotient of $Y$, then $B_{X^*}$ is homeomorphic to a subspace of $B_{Y^*}$
\end{enumerate}
\end{proposition}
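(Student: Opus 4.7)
The plan is to treat the four items by the obvious functorial constructions, and to keep an eye on where the asymmetry between (1)--(2) (isometric) and (3)--(4) (only homeomorphic/isomorphic) forces an extra rescaling argument.

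For (1), I would use $T_\psi(f)=f\circ\psi$ directly. Linearity is immediate; the key point is that since $\psi$ is surjective, $\sup_{y\in L}|f(\psi(y))|=\sup_{x\in K}|f(x)|$, so $T_\psi$ is an isometric linear embedding of $C(K)$ into $C(L)$.

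For (2), the natural map is the restriction $R\colon C(K)\to C(L)$, $R(f)=f\restriction L$. This is a norm-one linear operator with kernel $I=\{f\in C(K):f\restriction L=0\}$, so it factors through $C(K)/I$. Tietze's extension theorem on the compact Hausdorff space $K$ guarantees that every $g\in C(L)$ admits an extension $\tilde g\in C(K)$ with $\|\tilde g\|_\infty=\|g\|_\infty$, which simultaneously gives surjectivity and the isometry of the induced map $C(K)/I\to C(L)$ (the reverse inequality $\|g\|\le\|f\|$ for any lift $f$ is trivial).

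For (3), let $j\colon X\hookrightarrow Y$ be the inclusion and consider the dual map $j^*=R\colon Y^*\to X^*$, $y^*\mapsto y^*\restriction X$. Since $\|j\|\le 1$, $R$ maps $B_{Y^*}$ into $B_{X^*}$, and it is weak$^*$--weak$^*$ continuous by the very definition of the two weak$^*$ topologies. Hahn--Banach provides, for every $x^*\in B_{X^*}$, a norm-preserving extension $y^*\in B_{Y^*}$ with $R(y^*)=x^*$, so $R\restriction B_{Y^*}$ is a continuous surjection onto $B_{X^*}$.

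For (4), fix a bounded linear surjection $Q\colon Y\to X$ realizing $X$ as isomorphic to $Y/\ker Q$; then $Q^*\colon X^*\to Y^*$ is linear, injective (as $Q$ is onto), weak$^*$--weak$^*$ continuous, and bounded with $\|Q^*\|=\|Q\|$. The main subtlety here, and the only place where one cannot copy the Stone duality template verbatim, is that $Q^*$ need not send $B_{X^*}$ into $B_{Y^*}$: one only gets $Q^*[B_{X^*}]\subseteq\|Q\|\cdot B_{Y^*}$. This is harmless because the scaling map $y^*\mapsto y^*/\|Q\|$ is a weak$^*$ homeomorphism from $\|Q\|\cdot B_{Y^*}$ onto $B_{Y^*}$. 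Since $B_{X^*}$ is weak$^*$ compact and $Q^*$ is continuous and injective, $Q^*\restriction B_{X^*}$ is a homeomorphism onto its compact image, which after rescaling sits inside $B_{Y^*}$ as desired. The rescaling step is the only mildly non-routine ingredient in the whole proposition.
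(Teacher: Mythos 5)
Your proof is correct and follows essentially the same route the paper sketches in the discussion preceding the proposition: the composition operator $T_\psi(f)=f\circ\psi$ together with Tietze and Hahn--Banach for (1)--(3), and for (4) the observation that $Q^*[B_{X^*}]\subseteq \|Q\| B_{Y^*}$ combined with the fact that $\|Q\| B_{Y^*}$ is weak$^*$ homeomorphic to $B_{Y^*}$, which is exactly the rescaling device the paper singles out as the point where the Stone-duality analogy breaks down. All details (injectivity of $Q^*$ from surjectivity of $Q$, compactness of $B_{X^*}$ making the continuous injection a homeomorphism onto its image) are correctly supplied.
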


It should be nevertheless noted, that even isomorphic Banach spaces may have non-homeomorphic dual balls
(with the weak$^*$ topologies), this can even happen in the very canonical case of a nonseparable Hilbert space
(see \cite{antoniohilbert}, here the weak and the weak$^*$ topologies agree). 
On the other hand
we have the following analogy of the Stone duality:

\begin{proposition}\label{semistone} Let $K$ be a compact space and $X$ be a Banach space.
\begin{enumerate}
\item  $K$ is homeomorphic to a subspace of $B_{C(K)^*}$
\item  $X$ is isometric to a subspace of $C(B_{X^*})$
\end{enumerate}
\end{proposition}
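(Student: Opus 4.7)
The plan is to write down the two canonical evaluation maps, verify they are well-defined into the stated codomains, and check the required topological/metric properties. Both parts use standard tools: Urysohn's lemma for (1) and the Hahn--Banach theorem for (2).

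For part (1), I would define $e_K: K \to B_{C(K)^*}$ by $e_K(x)(f) = f(x)$ for $x \in K$ and $f \in C(K)$. Each $e_K(x)$ is clearly linear, and $|e_K(x)(f)| \leq \|f\|_\infty$, so $\|e_K(x)\| \leq 1$; taking $f \equiv 1$ shows equality, so $e_K(x) \in B_{C(K)^*}$. Continuity of $e_K$ with respect to the weak$^*$ topology on the target is immediate from the definition of the weak$^*$ topology, since for each $f \in C(K)$ the composition $x \mapsto e_K(x)(f) = f(x)$ is continuous. Injectivity follows from Urysohn's lemma: distinct points $x \neq y$ in $K$ are separated by some $f \in C(K)$. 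Since $K$ is compact and $B_{X^*}$ is Hausdorff, a continuous injection is automatically a homeomorphism onto its image.

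For part (2), I would define $e_X : X \to C(B_{X^*})$ by $e_X(x)(x^*) = x^*(x)$. For each fixed $x$, the map $x^* \mapsto x^*(x)$ is by definition weak$^*$-continuous on $B_{X^*}$, so $e_X(x) \in C(B_{X^*})$. Linearity of $e_X$ in $x$ is immediate. For the isometry property, we have
\[
\|e_X(x)\|_\infty = \sup_{x^* \in B_{X^*}} |x^*(x)| = \|x\|,
\]
where the last equality is the standard consequence of the Hahn--Banach theorem (any $x$ admits a norming functional of norm one).

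The main obstacle, if one can call it that, is purely psychological: checking that these are the "right" maps and keeping the two dualities straight. Nothing deeper is needed beyond recognizing that the point evaluations in (1) and the coordinate functionals in (2) are exactly the canonical injections provided by the duality. Once written down, everything reduces to a line or two using Urysohn and Hahn--Banach respectively.
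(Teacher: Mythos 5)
Your proof is correct and uses exactly the same canonical maps as the paper: the point evaluations $\delta_x$ for (1) and the restriction of the canonical embedding $x\mapsto x^{**}$ for (2); you merely spell out the standard details (Urysohn, compact-to-Hausdorff, Hahn--Banach) that the paper leaves implicit. The only slip is a typo: in part (1) the Hausdorff space should be $B_{C(K)^*}$, not $B_{X^*}$.
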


\begin{proof}

If $x$ is a point of a compact space $K$ then the
 pointwise measure functionals $\delta_x$ are inside the dual ball of the Banach space $C(K)$
and  the map that sends 
$x\in K$ to $\delta_x$ is a homeomorphism.
For any Banach space
 the map which sends $x$ to $x^{**}$ in the
bidual is an isometry whose image is included in $C(B_{X^*})$.
\end{proof}

However, unlike in the case of the Stone duality, $B_{C(K)^*}$ is not homeomorphic to $K$ nor
$C(B_{X^*})$ is not isomorphic to $X$ in most cases.  

Note that in \ref{dualities} isomorphic versions of (3) is not true: if $T:X\rightarrow Y$
is an isomorphism onto its range, then $B_{X^*}$ may not be a continuous image of $B_{Y^*}$.
This follows from results of A. Avil\'es (Theorems 2 and 4 or \cite{antoniohilbert}) even in the case
of both of the spaces isomorphic to a Hilbert space. However we have the following:

\begin{proposition} Suppose that $X$ and $Y$ are Banach spaces and  $T:X\rightarrow Y$
is an isomorphism onto its range. Then the family of  compact subsets of $X^*$ with the weak$^*$ topology
is included in the family of continuous images of  compact subsets of $Y^*$ with the weak$^*$ topology.
\end{proposition}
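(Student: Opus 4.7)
The plan is to exploit the dual map $T^*:Y^*\rightarrow X^*$ given by $T^*(y^*)=y^*\circ T$, which is weak$^*$-to-weak$^*$ continuous just as in the discussion preceding Proposition \ref{dualities}. The key structural fact is that when $T$ is an isomorphism onto its range, $T^*$ not only is surjective but admits \emph{norm-controlled} preimages: given $x^*\in X^*$, one extends the bounded functional $x^*\circ T^{-1}$ defined on $T[X]$ to all of $Y$ by the Hahn-Banach theorem, obtaining $y^*\in Y^*$ with $T^*(y^*)=x^*$ and $\|y^*\|\leq \|T^{-1}\|\,\|x^*\|$, where $T^{-1}$ denotes the inverse of the isomorphism $T:X\rightarrow T[X]$.

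Given a weak$^*$-compact $K\subseteq X^*$, I would first note that $K$ is norm bounded, say $r:=\sup\{\|x^*\|:x^*\in K\}<\infty$, since weak$^*$-compact sets in a dual space are norm bounded. Writing $N:=\|T^{-1}\|$, I would then set
$$L:=(T^*)^{-1}(K)\,\cap\, Nr\cdot B_{Y^*}.$$
Since $T^*$ is weak$^*$-continuous and $K$ is weak$^*$-closed, $(T^*)^{-1}(K)$ is weak$^*$-closed in $Y^*$; intersecting with the weak$^*$-compact ball $Nr\cdot B_{Y^*}$ yields a weak$^*$-compact subset of $Y^*$.

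To finish, I would check that $T^*(L)=K$. The inclusion $T^*(L)\subseteq K$ is immediate from the definition of $L$. For the reverse inclusion, each $x^*\in K$ has, by the Hahn-Banach argument above, a preimage $y^*\in Y^*$ under $T^*$ with $\|y^*\|\leq N\|x^*\|\leq Nr$, so $y^*\in L$. Hence the restriction $T^*\!\upharpoonright\! L:L\rightarrow K$ is a weak$^*$-continuous surjection between weak$^*$-compact sets, which is exactly what the proposition claims.

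The only point requiring care is the norm control on preimages that keeps $L$ inside a weak$^*$-compact ball; this is precisely where the hypothesis that $T$ is an isomorphism onto its range (rather than merely a bounded injection) is essential, as the failure of this bound is also why the isomorphic analogue of Proposition \ref{dualities}(3) fails in the examples of Avil\'es cited just above.
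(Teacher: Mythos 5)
Your proof is correct and follows essentially the same route as the paper's: weak$^*$-continuity of $T^*$, a Hahn--Banach extension of $x^*\circ T^{-1}$ from $T[X]$ to $Y$ giving preimages of norm at most $\|T^{-1}\|\,\|x^*\|$, and boundedness of weak$^*$-compact sets. If anything, your explicit intersection of $(T^*)^{-1}(K)$ with the ball $Nr\cdot B_{Y^*}$ is slightly more careful than the paper's phrasing, since $(T^*)^{-1}[K]$ itself need not be bounded when $T[X]$ has a nontrivial annihilator.
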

\begin{proof} Let $T^{-1}:T[X]\rightarrow X$. $T^*: Y^*\rightarrow X^*$ is weak$^*$ continuous.
Note that 
$$  {1\over{||T^{-1}||}}B_{X^*} \subseteq T^*[B_{Y^*}]\subseteq ||T||B_{X^*}.$$
The right hand side inclusion is clear from the fact that $||T||=||T^*||$. For the 
left hand side inclusion note that if $\phi\in {1\over{||T^{-1}||}}B_{X^*}$, then
$(T^{-1})^*(\phi)\in B_{T[X]^*}$ is of at most norm one, and so it has
an extension $\psi\in B_{Y^*}$. Now $T^*(\psi)=\psi\circ T=\phi\circ T^{-1}\circ T=\phi$.

If $K$ is compact in $X^*$, it is bounded and so $K\subseteq nB_{X^*}$ for some $n\in\N$.
It follows that $K$ is a continuous image under $T^*$ of $(T^*)^{-1}[K]\subseteq {n{||T^{-1}||}}
B_{Y^*}$.
\end{proof}

So, it may be more natural to consider classes of compact spaces associated with a  Banach space
(all compact weak$^*$ subsets of its dual) than just the dual ball. This leads, however, to classes of Banach spaces if we aim at some duality. In the  Banach space theory practice one considers quite general classes of Banach
spaces and compact spaces
which are closed by taking these dual objects. In most cases the classes of compact spaces are closed under taking subspaces and so the family of all weakly$^*$ compact subsets of  a given dual $X^*$ is included in such a class
if and only if the dual ball $nB_{X^*}$ belongs to the class.
Here by classes of compact spaces or Banach spaces we mean
classes which are homeomorphism or isomorphism invariant respectively:

\begin{definition}\label{associated} Let $\mathcal K$ be a class of compact spaces 
and $\mathcal B$ be a class of Banach spaces. We say that $\mathcal K$ and $\mathcal B$
are associated if and only the following two implications hold:
\begin{itemize}
\item If $K\in \mathcal K$, then $C(K)\in \mathcal B$.
\item If $X\in \mathcal B$, then $B_{X^*}\in \mathcal K$.
\end{itemize}
If instead of the first implication we have equivalence, we say that the classes are
$K$-associated. If instead of the second implication we have equivalence, we say that the classes are
$B$-associated. If instead of  both of the
implications above we have equivalences, we say that
the classes are strongly associated.
\end{definition}

\begin{lemma}\label{KBassociated}
Suppose $\mathcal K$ is a class of compact spaces and $\mathcal B$ is a class of
Banach spaces such that $\mathcal K$ and $\mathcal B$ are associated.
\begin{enumerate}
\item If $\mathcal K$ is closed under taking subspaces, then $\mathcal K$ and $\mathcal B$ are $K$-associated.
\item If $\mathcal B$ is closed under taking subspaces, then $\mathcal K$ and $\mathcal B$ are $B$-associated.
\end{enumerate}
\end{lemma}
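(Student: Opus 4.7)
The plan is to feed Proposition \ref{semistone} into the two implications built into the definition of associated classes. In each case we already have one direction for free from the hypothesis that $\mathcal K$ and $\mathcal B$ are associated, so we just need to supply the converse, and the converse is precisely what the semi-Stone embeddings in Proposition \ref{semistone} deliver once we apply the hypothesis once more ``on the other side.''

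For (1), I would start with a compact $K$ such that $C(K)\in\mathcal B$ and aim to show $K\in\mathcal K$. Applying the second clause of the definition of associated to $C(K)$ gives $B_{C(K)^*}\in\mathcal K$. By Proposition \ref{semistone}(1), $K$ is homeomorphic to a subspace of $B_{C(K)^*}$, and then the closure of $\mathcal K$ under taking subspaces (together with the fact that $\mathcal K$ is homeomorphism-invariant) yields $K\in\mathcal K$. Combined with the forward implication coming from associated, this is exactly the equivalence demanded by $K$-association.

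For (2), I would dualize the argument. Take $X$ with $B_{X^*}\in\mathcal K$. Applying the first clause of associated to $B_{X^*}$ gives $C(B_{X^*})\in\mathcal B$. By Proposition \ref{semistone}(2), $X$ is isometric to a subspace of $C(B_{X^*})$, so closure of $\mathcal B$ under subspaces (and isomorphism-invariance) forces $X\in\mathcal B$. Together with the forward implication, this is the equivalence required for $B$-association.

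There is no real obstacle here; the statement is essentially a bookkeeping lemma. The only minor point worth being explicit about is that ``closed under taking subspaces'' must be interpreted in the topological/Banach sense matching the embedding provided by Proposition \ref{semistone} (homeomorphic subspace for $\mathcal K$, isometric subspace for $\mathcal B$), but since the classes are taken to be homeomorphism- and isomorphism-invariant respectively, no extra care is needed.
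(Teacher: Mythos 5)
Your proof is correct and is essentially the paper's own argument: the paper proves (1) exactly as you do (apply the second implication of the association to $C(K)$, then use Proposition \ref{semistone} and closure under subspaces) and dismisses (2) as ``analogous,'' which is precisely the dual argument you spell out. Nothing to add.
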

\begin{proof}
If $C(K)\in \mathcal B$, by the second implication of \ref{associated}, $B_{{C(K)}^*}\in \mathcal K$.
By \ref{semistone} $K$ is a subspace of $B_{C(K)^*}$, so, by the hypothesis on $\mathcal K$, we get
$K\in \mathcal K$. The other part is analogous.
\end{proof}

\begin{lemma}\label{heredityassoc}
Suppose $\mathcal K$ is a class of compact spaces closed under taking
continuous images and $\mathcal B$ is a class of
Banach spaces which is not closed under taking subspaces and suppose  that $\mathcal K$ and $\mathcal B$ are associated.
Then $\mathcal K$ and $\mathcal B$ are not $B$-associated.
\end{lemma}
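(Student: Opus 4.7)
The plan is to produce a Banach space $X$ with $B_{X^*} \in \mathcal{K}$ but $X \notin \mathcal{B}$, which directly witnesses the failure of the reverse direction of the second implication in Definition \ref{associated}, and hence the failure of $B$-association.

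First, I would use the hypothesis that $\mathcal{B}$ is not closed under taking subspaces to pick a Banach space $Y \in \mathcal{B}$ together with a closed subspace $X \subseteq Y$ such that $X \notin \mathcal{B}$. Such a pair exists precisely because $\mathcal{B}$ fails to be subspace-closed.

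Second, I would invoke that $\mathcal{K}$ and $\mathcal{B}$ are associated: from $Y \in \mathcal{B}$ one gets $B_{Y^*} \in \mathcal{K}$. By Proposition \ref{dualities}(3), $B_{X^*}$ is a continuous image of $B_{Y^*}$ (via the restriction map induced by the inclusion). Since $\mathcal{K}$ is closed under continuous images by hypothesis, this yields $B_{X^*} \in \mathcal{K}$.

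Combining these two steps gives $X \notin \mathcal{B}$ while $B_{X^*} \in \mathcal{K}$, so the implication ``$B_{X^*} \in \mathcal{K} \Rightarrow X \in \mathcal{B}$'' fails for this $X$; therefore $\mathcal{K}$ and $\mathcal{B}$ are not $B$-associated. There is no real obstacle here: the argument is a clean concatenation of the hypothesis on $\mathcal{B}$, Proposition \ref{dualities}(3), and the hypothesis on $\mathcal{K}$. The only point requiring attention is making sure the continuous image step of Proposition \ref{dualities}(3) is applied in the right direction (subspace on the Banach side yields a continuous surjection on the dual ball side), which is exactly what is needed to transport membership in $\mathcal{K}$ from $B_{Y^*}$ down to $B_{X^*}$.
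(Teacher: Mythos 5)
Your argument is correct and is essentially identical to the paper's own proof: both take $X \notin \mathcal{B}$ inside $Y \in \mathcal{B}$, use Proposition \ref{dualities}(3) to exhibit $B_{X^*}$ as a continuous image of $B_{Y^*} \in \mathcal{K}$, and conclude $B_{X^*} \in \mathcal{K}$ while $X \notin \mathcal{B}$. Nothing further is needed.
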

\begin{proof}
Let $X\not\in \mathcal B$ be a subspace of $Y\in\mathcal B$. Then $B_{X^*}$ is a continuous 
image of $B_{Y^*}\in \mathcal K$ by \ref{dualities}, hence  $B_{X^*}\in \mathcal K$ which proves
that the second implication of \ref{associated} cannot be reversed.
\end{proof}

We note here that some natural classes of compact spaces are not associated to
classes of Banach space. For example
$B_{X^*}$ is never totally disconnected, so neither totally disconnected spaces nor scattered spaces
are associated with a class of Banach spaces. 
Also many natural classes of Banach spaces are not associated with classes of compact spaces, for example reflexive spaces are not
associated with a class of Banach spaces because no infinite dimensional $C(K)$ space is reflexive.
As the above classes play important roles, the issues of universality are well investigated for them, 
e.g. see \cite{szlenk, wojtaszczyk, asplunduniversal, bellwitek}, but we will not be concerned with them in this note.
In general we have the following:

\begin{proposition}\label{associatedcrit} A class of compact spaces $\mathcal K$
closed by taking subspaces is associated with
a class of Banach spaces if and only if $B_{C(K)^*}\in \mathcal K$ whenever
$K\in \mathcal K$. A class of Banach spaces  $\mathcal B$ closed under taking isometric
copies is associated with
a class of compact spaces if and only if $C(B_{X^*})\in \mathcal B$ whenever
$X\in \mathcal B$.
\end{proposition}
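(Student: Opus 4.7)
The plan is to prove each biconditional in the standard fashion, with a one-line forward direction and an explicit construction for the backward direction. For the forward directions, I simply unfold Definition~\ref{associated}: if $\mathcal K$ is associated with some class $\mathcal B$ of Banach spaces and $K\in\mathcal K$, then $C(K)\in\mathcal B$ by the first implication, and hence $B_{C(K)^*}\in\mathcal K$ by the second; the Banach-space statement is entirely symmetric.

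For the backward direction of the first claim, I would exhibit the canonical witness $\mathcal B:=\{X : B_{X^*}\in\mathcal K\}$. Both axioms of association are then transparent: the first says precisely that $B_{C(K)^*}\in\mathcal K$ whenever $K\in\mathcal K$, which is the hypothesis; the second is baked into the definition of $\mathcal B$. The only genuine piece of work is to check that $\mathcal B$ is a legitimate class of Banach spaces, i.e., is at least isometry invariant. For this I would note that a surjective linear isometry $T:X\to Y$ produces an adjoint $T^*$ whose restriction to $B_{Y^*}$ is a weak$^*$-continuous bijection onto $B_{X^*}$, hence a homeomorphism, so that membership of $X$ in $\mathcal B$ depends only on the isometric type of $X$. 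The backward direction of the second claim is dual: take $\mathcal K:=\{K : C(K)\in\mathcal B\}$. The required homeomorphism-invariance of $\mathcal K$ comes from combining \ref{dualities}(1) and (2), which together yield that homeomorphic compact spaces have isometric $C(K)$ spaces, with the hypothesis that $\mathcal B$ is closed under isometric copies. The two association axioms for this pair are then, again, immediate from the definition of $\mathcal K$ and the hypothesis that $C(B_{X^*})\in\mathcal B$ whenever $X\in\mathcal B$.

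The only real obstacle is thus the invariance check for the constructed classes; the rest is unfolding definitions. I do not see the "closed under subspaces" assumption on $\mathcal K$ in the first statement entering the existence argument at all. It appears to be listed so that, via Lemma~\ref{KBassociated}(1), the witness produced is automatically a $K$-association in the sense of Definition~\ref{associated}, rather than for the bare existence of some associated class.
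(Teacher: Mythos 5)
Your construction of the witness classes $\mathcal B=\{X: B_{X^*}\in\mathcal K\}$ and $\mathcal K=\{K: C(K)\in\mathcal B\}$ is exactly the paper's, and your forward directions and the treatment of the second claim are fine. But there is a genuine gap in the first claim, and it is precisely at the point you flag as a non-issue. The paper's standing convention (stated just before Definition~\ref{associated}) is that a class of Banach spaces must be \emph{isomorphism} invariant, not merely isometry invariant. You only verify isometry invariance of $\mathcal B$ via the adjoint of a surjective isometry; that does not suffice, because isomorphic Banach spaces can have non-homeomorphic dual balls (the paper points this out, citing Avil\'es, even for nonseparable Hilbert spaces), so $B_{X^*}\in\mathcal K$ does not by itself give $B_{Y^*}\in\mathcal K$ for $Y$ isomorphic to $X$.

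This is exactly where the hypothesis that $\mathcal K$ is closed under taking subspaces enters the existence argument, contrary to your closing remark. If $T:X\to Y$ is an isomorphism onto $Y$, then $T^*\restriction B_{Y^*}$ is a weak$^*$ homeomorphism onto a subset of $\|T\|B_{X^*}$, so $B_{Y^*}$ is homeomorphic to a subspace of a homeomorphic copy of $B_{X^*}$; subspace-closure (plus homeomorphism invariance) of $\mathcal K$ then yields $B_{Y^*}\in\mathcal K$, i.e., isomorphism invariance of $\mathcal B$. This is the argument the paper gives. So the subspace hypothesis is not there merely to upgrade the association to a $K$-association; without it the constructed $\mathcal B$ need not be a legitimate class in the paper's sense, and your proof of the first backward implication is incomplete.
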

\begin{proof} If the above conditions hold define 
$$\mathcal B=\{ X: B_{X^*}\in \mathcal K\}, \ \ \ \mathcal K=\{ K: C(K)\in \mathcal B\},$$
respectively. It remains to prove that $\mathcal B$ is isomorphism invariant and
$\mathcal K$ is homeomorphism invariant. In the first case we use the fact that
if we have two isomorphic Banach spaces, then the dual ball of one is a subspace of a homeomorphic
copy of the dual ball of the other. In the second case we use the fact that 
if two compact spaces are homeomorphic, then the spaces of their continuous functions are isometric.
\end{proof}

There are also elementary ways of obtaining strongly associated classes:

\begin{proposition}\label{makingstrongly} Suppose that $\mathcal B$ and $\mathcal K$ are associated classes
of Banach spaces and compact spaces respectively. Suppose $\mathcal K$ is closed under taking subspaces of its elements. Let $\mathcal B'$ be the class of all
subspaces of elements of $\mathcal B$ and $\mathcal K'$ be the class of continuous images of
elements of $\mathcal K$.
 Then $\mathcal B'$ and $\mathcal K'$ are strongly associated.
\end{proposition}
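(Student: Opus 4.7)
I plan to verify the four required statements directly, using Propositions \ref{dualities} and \ref{semistone} as the main tools, and the hypothesis that $\mathcal{K}$ is closed under subspaces where needed.

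First I would check that $\mathcal{B}'$ and $\mathcal{K}'$ are associated in the sense of Definition \ref{associated}. For the forward direction, if $K\in \mathcal{K}'$, pick $L\in\mathcal{K}$ with $K$ a continuous image of $L$; by \ref{dualities}(1), $C(K)$ embeds isometrically into $C(L)$, and $C(L)\in\mathcal{B}$ by the association of $\mathcal{B}$ and $\mathcal{K}$, so $C(K)\in\mathcal{B}'$. For the backward direction, if $X\in\mathcal{B}'$, pick $Y\in\mathcal{B}$ containing $X$ as a subspace; then $B_{Y^*}\in\mathcal{K}$ and by \ref{dualities}(3), $B_{X^*}$ is a continuous image of $B_{Y^*}$, so $B_{X^*}\in\mathcal{K}'$.

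Next I would upgrade each of these implications to an equivalence. For the $B$-equivalence, suppose $B_{X^*}\in\mathcal{K}'$, so $B_{X^*}$ is a continuous image of some $L\in\mathcal{K}$. By \ref{dualities}(1), $C(B_{X^*})$ is isometric to a subspace of $C(L)\in\mathcal{B}$; combined with \ref{semistone}(2), which embeds $X$ isometrically into $C(B_{X^*})$, we obtain $X$ as a subspace of an element of $\mathcal{B}$, so $X\in\mathcal{B}'$. This direction uses only the hypotheses already in place and does not need closure of $\mathcal{K}$ under subspaces.

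The one step where the hereditary assumption on $\mathcal{K}$ is essential is the $K$-equivalence: if $C(K)\in\mathcal{B}'$, pick $Y\in\mathcal{B}$ with $C(K)\subseteq Y$. Then $B_{Y^*}\in\mathcal{K}$, and by \ref{dualities}(3) there is a continuous surjection $\pi:B_{Y^*}\to B_{C(K)^*}$. By \ref{semistone}(1), $K$ embeds homeomorphically as a closed subspace of $B_{C(K)^*}$; let $M=\pi^{-1}[K]$, a closed subspace of $B_{Y^*}$. Since $\mathcal{K}$ is closed under subspaces, $M\in\mathcal{K}$, and $K$ is a continuous image of $M$ under $\pi\!\upharpoonright\! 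M$, hence $K\in\mathcal{K}'$. The pull-back trick here is the only step that looks slightly nontrivial — this is where one sees why the subspace-closure of $\mathcal{K}$ (rather than of $\mathcal{K}'$) is enough to secure both halves of "strongly associated."
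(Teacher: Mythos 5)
Your proof is correct and takes essentially the same route as the paper: the two association implications are proved exactly as you do, and the paper then, instead of checking the two converses by hand, observes that $\mathcal B'$ and $\mathcal K'$ are both closed under taking subspaces (your pull-back $M=\pi^{-1}[K]$ is precisely what makes $\mathcal K'$ hereditary) and invokes Lemma \ref{KBassociated}. Your direct verifications of the $K$- and $B$-equivalences via \ref{semistone} just inline the content of that lemma, so nothing is genuinely different.
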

\begin{proof} As $\mathcal B'$ and $\mathcal K'$ are closed under taking subspaces, by
\ref{KBassociated} it is enough to
prove that the classes are associated.
Suppose $Y\in  \mathcal B'$, i.e, $X\subseteq Y$ with $Y\in \mathcal B$, 
then $B_{Y^*}$ is a continuous image od $B_{X^*}\in \mathcal K$ by \ref{dualities}, 
and hence $B_{Y^*}\in \mathcal K'$. If $L\in \mathcal K'$, then $L$ is a continuous image
of some $K\in \mathcal K$ and so $C(L)\subseteq C(K)\in \mathcal B$ by \ref{dualities}.
\end{proof}

It should be noted that given a class of compact spaces or a class of Banach spaces
an associated class is not uniquely determined. For example WCG Banach spaces and
subspaces of WCG Banach spaces are associated with the class of Eberlein compacta
(see the next section). The intersection of two associated classes with a given class is again
an associated class, so we can talk about minimal associated classes. However
these are pretty trivial and uninteresting, e.g., given associated classes $\mathcal K$
and $\mathcal B$ 
consider $\mathcal B'$ to be equal to the class of all Banach spaces  in $\mathcal B$
isomorphic to  a space of the form $C(K)$ for $K\in \mathcal K$. 
$\mathcal B'$ is the minimal class associated with $\mathcal K$. As our paradigmatic link
between classes of Banach spaces and compact spaces will be obtaining a universal 
Banach space for the associated class from a universal compact space in a given class (or in general
getting an information about Banach spaces from compact spaces)
we are rather interested in a maximal class of Banach spaces associated with minimal
classes of compact spaces. 

\section{Classes of compact and Banach spaces}

 \begin{definition}\label{classesK} Let $\kappa$ be a cardinal. We will use the following notation:
\begin{itemize}
\item $\UE_\kappa$ - uniform Eberlein compact spaces (i.e., compact subspaces of Hilbert spaces
with the weak topology) of weight $\leq\kappa$,
\item $\E_\kappa$ -  Eberlein compact spaces (i.e., compact subspaces of Banach spaces
with the weak topology) of weight $\leq\kappa$,
\item $\C_\kappa$ - Corson compact spaces (i.e., compact subspaces of $\Sigma$-products of
$\R$) of weight $\leq\kappa$,
\item $\CM_\kappa$ - Corson compact spaces with property M (i.e., 
Corson compacta where every Radon measure on $K$ has a separable support) of weight $\leq\kappa$,

\item $\RN_\kappa$ - Radon-Nikod\'ym compact spaces (i.e., fragmented by a lower semi-continuous metric)  of weight $\leq\kappa$,
\item $\QRN_\kappa$ -  Quasi Radon-Nikod\'ym compact spaces (i.e., fragmented by a lower-semi continuous quasi-metric)  of weight $\leq\kappa$,
\item $\K_\kappa$ - all compact spaces of weight $\leq\kappa$.
\end{itemize}
\end{definition}

We have $\UE_\kappa\subseteq \E_\kappa\subseteq  \CM_\kappa\subseteq\C_\kappa\subseteq \K_\kappa$
and  $\E_\kappa\subseteq \RN_\kappa\subseteq \QRN_\kappa\subseteq \K_\kappa$
for each cardinal $\kappa$ and $\E_\kappa=\C_\kappa\cap \RN_\kappa$ by \cite{orihuelaetal}.
Note that metrizability in the class of compact spaces is equivalent to countable weight, so
actually the class $\K_\omega$ is the class of metrizable compact spaces.
More details concerning these classes of spaces can be found in \cite{negrepontis} or in \cite{overclasses}.
In particular, all the classes above are closed under taking subspaces and with the exception of
Radon-Nikod\'ym compacta, are closed under taking continuous images (see \cite{rn}).
The first of these facts is trivial and second relays on the results of Benyamini, Rudin and Wage
from \cite{BRW} and Argyros, Mercourakis, Negrepontis from \cite{argyroscorson}.
By 3.2. (3) of \cite{argyroscorson} we have that $\C_\kappa=\CM_\kappa$ for
for every  cardinal $\kappa$ if Martin's axiom a the negation of the continuum hypothesis CH holds.
If CH holds then  $\C_\kappa\not=\CM_\kappa$ for any uncountable $\kappa$ (3. 12. \cite{argyroscorson})
By a result of A. Avil\'es (\cite{antoniob}) $\QRN_\kappa=\RN_\kappa$ if $\kappa<\bb$,
however $\QRN_{2^\omega}\not=\RN_{2^\omega}$ by the main resut of \cite{rn}.

 \begin{definition}\label{classesB} Let $\kappa$ be a cardinal. We will use the following notation:
\begin{itemize}
\item $\overline{\mathcal{H}}_\kappa$ - 
Hilbert generated Banach spaces  of density character $\leq\kappa$
\item $\mathcal{WCG}_\kappa$ -  Weakly compactly generated Banach spaces of density character $\leq\kappa$
\item $\mathcal{WLD}_\kappa$ - Weakly Lindel\"of determined Banach spaces  of density character $\leq\kappa$
\item $\mathcal{L}_\kappa$ - Banach spaces  of density character $\leq\kappa$ which are Lindel\"of in the weak topology. 
\item $\mathcal{AG}_\kappa$ - Asplund generated Banach spaces of of density character $\leq\kappa$ 
\item $\mathcal{SAG}_\kappa$ - subspaces of Asplund generated Banach spaces of of density character $\leq\kappa$ 
\item $\mathcal B_\kappa$ - all Banach spaces of density character $\leq\kappa$.
\end{itemize}
\end{definition}

Good sources of information about the above classes of Banach spaces are:
\cite{fabianetal} for $\overline{\mathcal{H}}_\kappa$, $\mathcal{L}_\kappa$ and for  $\mathcal{WCG}_\kappa$;
 \cite{kalendasurvey} for $\mathcal{WLD}_\kappa$; \cite{overclasses} for $\mathcal{AG}_\kappa$  and $\mathcal{SAG}_\kappa$.  We have 
$\overline{\mathcal{H}}_\kappa\subseteq \mathcal{WCG}_\kappa\subseteq \mathcal{WLD}_\kappa\subseteq\mathcal B_\kappa$
and $\mathcal{WCG}_\kappa\subseteq\mathcal{AG}_\kappa\subseteq \mathcal{SAG}_\kappa\subseteq\mathcal{B}_\kappa$.
All the classes are closed under taking quotients. An example of Rosenthal from \cite{rosenthal} 
is a space from $\overline{\mathcal{H}}_{2^\omega}$ whose subspace is not in $\mathcal{WCG}_{2^\omega}$.
We have the following

\begin{theorem}\label{associationlist} The pairs of classes 
\begin{enumerate}
\item $\UE_\kappa$ and $\overline{\mathcal{H}}_\kappa$ are $K$-associated and are not strongly associated.
\item $\E_\kappa$ and  $\mathcal{WCG}_\kappa$,  are $K$-associated and are not strongly associated.
\item $\mathcal L_{\omega_1}$ is not associated with $\C_{\omega_1}$

\item $\RN_\kappa$ and $\mathcal{AG}_\kappa$ are $K$-associated, strongly
associated if $\kappa<\bb$ and not
strongly associated if $\kappa\geq \bb$;
\item $\QRN_\kappa$ and $\mathcal{SAG}_\kappa$ are strongly associated
\item   $\K_\kappa$ and $\mathcal B_\kappa$ are strongly associated.

\end{enumerate}
\end{theorem}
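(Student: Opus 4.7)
The plan is to verify each of the six clauses by combining the classical characterization theorems that identify each Banach-space class of Definition \ref{classesB} with a topological property of the dual unit ball, together with the closure properties listed in Section 3 and the abstract machinery of Section 2 (Propositions \ref{dualities} and \ref{semistone}, Lemmas \ref{KBassociated} and \ref{heredityassoc}).

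Clause (6) is immediate from the identities $w(K) = \mathrm{dens}\, C(K)$ and $w(B_{X^*}) = \mathrm{dens}\, X$, together with Proposition \ref{semistone}; since both $\K_\kappa$ and $\mathcal B_\kappa$ are closed under subspaces, Lemma \ref{KBassociated} upgrades the resulting association to strong association. Clauses (1) and (2) rest on the Amir--Lindenstrauss theorem ($X$ is WCG iff $B_{X^*}$ is Eberlein) and its uniform-Eberlein refinement due to Fabi\'an--Godefroy--H\'ajek--Montesinos--Zizler; the subspace closure of $\UE_\kappa$ and $\E_\kappa$ then delivers $K$-association by Lemma \ref{KBassociated}. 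Since $K$-association holds, to show that strong association fails it is enough to obstruct $B$-association, and this is done by Rosenthal's non-WCG subspace $X$ of a Hilbert-generated space: by Proposition \ref{dualities}(3) together with the Benyamini--Rudin--Wage closure of $\UE_\kappa$ and $\E_\kappa$ under continuous images, $B_{X^*}$ remains uniform Eberlein while $X$ lies outside both $\overline{\mathcal H}_\kappa$ and $\mathcal{WCG}_\kappa$, which is precisely the obstruction codified by Lemma \ref{heredityassoc}.

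Clause (5) is structurally the cleanest: $\mathcal{SAG}_\kappa$ is closed under subspaces by definition and $\QRN_\kappa$ is closed under subspaces by Section 3, so by Lemma \ref{KBassociated} it suffices to prove association, which reduces to the characterization $X \in \mathcal{SAG}$ iff $B_{X^*} \in \QRN$ due to Avil\'es and resting on Arvanitakis's identification of QRN compacta as continuous images of RN compacta. Clause (4) combines Stegall's theorem ($X \in \mathcal{AG}$ implies $B_{X^*} \in \RN$) with its $C(K)$-level converse to obtain $K$-association. Strong association in the range $\kappa < \bb$ reduces to clause (5) via Avil\'es's equality $\RN_\kappa = \QRN_\kappa$, which forces $\mathcal{AG}_\kappa = \mathcal{SAG}_\kappa$ below $\bb$. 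Failure of strong association for $\kappa \geq \bb$ is witnessed by a subspace of an Asplund generated space whose dual ball is Radon--Nikod\'ym but which is not itself Asplund generated; the existence of such a Banach space is extracted from the compactum constructed in \cite{rn}.

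Clause (3) asks only for a counterexample to one of the two implications of Definition \ref{associated} for the pair $(\mathcal L_{\omega_1}, \C_{\omega_1})$. Under CH, Kunen's Corson compactum $K$ of weight $\omega_1$ has $C(K)$ not weakly Lindel\"of, contradicting the first implication; a ZFC counterexample can be extracted from standard tree-based constructions. The main obstacle throughout the theorem is clause (4): one must calibrate the AG/RN correspondence precisely at the threshold $\bb$ using Avil\'es's delicate $\RN = \QRN$ dichotomy, and the failure above $\bb$ requires a genuinely Banach-space witness of non-Asplund-generation rather than a purely topological one, since the $K$-association already fixes the $C(K)$-level behaviour.
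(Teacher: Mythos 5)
Your treatment of clauses (1), (2), (5) and (6) follows essentially the paper's route: association via Amir--Lindenstrauss, Benyamini--Starbird and the Avil\'es/Fabian characterizations, upgraded to $K$- or strong association by Lemma \ref{KBassociated} (the paper gets the $K$-association in (1) directly from the Benyamini--Starbird equivalence rather than from \ref{KBassociated}, but that is immaterial), and failure of strong association via Rosenthal's example together with Lemma \ref{heredityassoc} and the Benyamini--Rudin--Wage closure under continuous images. Two clauses, however, have genuine problems.

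For clause (3) your plan cannot be completed. You propose to refute the \emph{first} implication of Definition \ref{associated}, i.e.\ to find $K\in\C_{\omega_1}$ with $C(K)\notin\mathcal L_{\omega_1}$, using a CH example and then ``extracting a ZFC counterexample from standard tree-based constructions.'' No such ZFC counterexample exists: under MA$+\neg$CH every Corson compact has property M (3.2(3) of \cite{argyroscorson}), and for Corson compacta property M is equivalent to $C(K)$ being weakly Lindel\"of (3.5 of \cite{argyroscorson}), so consistently the first implication holds for all $K\in\C_{\omega_1}$. The statement being a ZFC theorem, one must instead refute the \emph{second} implication, and this is what the paper does: Pol's ladder system space $K$ is scattered with $C(K)\in\mathcal L_{\omega_1}\setminus\mathcal{WCG}_{\omega_1}$; by clause (2) $K$ is not Eberlein, hence by Alster's theorem not Corson, hence $B_{C(K)^*}\notin\C_{\omega_1}$ since it contains $K$ by Proposition \ref{semistone} and Corsonness is hereditary. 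That gives $C(K)\in\mathcal L_{\omega_1}$ with $B_{C(K)^*}\notin\C_{\omega_1}$ outright in ZFC.

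For clause (4) the shape of your witness is right (a non-Asplund-generated space whose dual ball is Radon--Nikod\'ym), but the source is not: the compactum of \cite{rn} is a quasi Radon--Nikod\'ym compact of weight $2^\omega$ that is not Radon--Nikod\'ym, which witnesses $\QRN_{2^\omega}\neq\RN_{2^\omega}$ but does not hand you a Banach space $X\notin\mathcal{AG}$ with $B_{X^*}\in\RN$ (and in any case says nothing at the threshold $\kappa=\bb$ when $\bb<2^\omega$). The paper instead takes the example at the bottom of p.~79 of \cite{antoniob}: a WCG space $Y$ with a subspace $X$ of density exactly $\bb$ that is not Asplund generated; then $B_{X^*}$ is a continuous image of the Eberlein compact $B_{Y^*}$, hence Eberlein and in particular in $\RN_\bb$, so $B$-association fails for every $\kappa\geq\bb$.
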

\begin{proof}
The equivalence of $K\in \UE_\kappa$ and $C(K)\in \overline{\mathcal{H}}_\kappa$ is the result 
of Benyamini and Starbird from \cite{benyaministar}. The implication from
$X\in \overline{\mathcal{H}}_\kappa$ to $B_{X^*}\in UE_\kappa$ is proved in \cite{hajeketal} as Theorem 6.30. 
Since continuous images of uniformly Eberlein compact space are
uniformly Eberlein (\cite{BRW}), \ref{heredityassoc} implies that  $\UE_\kappa$ and $\overline{\mathcal{H}}_\kappa$ 
are not strongly associated since the example of \cite{rosenthal} shows that
$\overline{\mathcal{H}}_\kappa$ is not closed under taking subspaces.
Facts on  $\E_\kappa$ and  $\mathcal{WCG}_\kappa$ can be found e.g. in \cite{fabianetal},
and are due to Amir and Lindenstrauss (\cite{amirlind}).
The fact that the classes are not strongly associated follows as for the previous class.
This gives (1) and (2).

For (3) consider the example of R. Pol (\cite{pollindelof}) of a scattered compact $K$ such that
$C(K)$ is in $\mathcal L_{\omega_1}\setminus \mathcal{WCG}_{\omega_1}$.
By (2) $K$  cannot be an Eberlein compact, so by a result of Alster (\cite{alster})
$K$ is not Corson compact, and so $B_{C(K)^*}$ is not, however $C(K)\in \mathcal L_{\omega_1}$
as proved in \cite{pollindelof}.

For associations of $\RN_\kappa$ with $\mathcal{AG}_\kappa$ and
$\QRN_\kappa$ with $\mathcal{SAG}_\kappa$ see \cite{overclasses}.
As $\mathcal{SAG}_\kappa$, $\QRN_\kappa$ and $\RN_\kappa$ are closed under subspaces,
\ref{KBassociated} implies that the association in (5) is strong and
in (4) it is a $K$-association. By a result of A. Avil\'es (\cite{antoniob})
 $\QRN_\kappa=\RN_\kappa$ if $\kappa<\bb$.  The strong association in (4)
for Banach spaces of density $<\bb$ is Theorem 5 of \cite{antoniob}.
It is also shown in \cite{antoniob} (bottom of page 79) that there is a
WCG Banach space $Y$ with a subspace $X$ of density character $\bb$ 
which is not Asplund generated.  It follows that $B_{X^*}\in \RN_\bb$
because it is an Eberlein compact as a continuous image of an Eberlein compact $B_{Y^*}$,
but $X$ is not Asplund generated, so $\RN_\bb$ and $\mathcal{AG}_\bb$ are not
$B$-associated.
\end{proof}

The situation for Corson compacta and weakly Lindel\"of determined spaces is more
complicated because fundamental relations depend on extra set-theoretic axioms:
$ $\par
$ $\par
\begin{theorem}\label{associationlist2}
$ $\par
\begin{enumerate}
\item $K\in \CM_\kappa$ if and
only if $C(K)\in \mathcal{WLD}_\kappa$ 
\item $X\in \mathcal{WLD}_\kappa$ if
and only if $B_{X^*}\in \C_\kappa$,
\item Assuming MA$+\neg$CH the classes $\C_\kappa=\CM_\kappa$ and $\mathcal{WLD}_\kappa$ are
strongly associated for any uncountable cardinal $\kappa$.
\item Assuming $CH$ 
\begin{enumerate}
\item $\C_{2^\omega}$ is not associated with any class of Banach spaces
\item $\mathcal{L}_{2^\omega}$ is not associated with any class of compact spaces
\item $\mathcal{WLD}_{2^\omega}$ is not associated with any class of compact spaces
\end{enumerate}
\item $\CM_{\kappa}$ is  associated with a class of Banach spaces for any uncountable
cardinal $\kappa$;
\end{enumerate}
\end{theorem}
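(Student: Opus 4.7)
The plan is to treat the five parts by combining three inputs: the Argyros--Mercourakis--Negrepontis theory of WLD spaces and property M from \cite{argyroscorson}, Pol's scattered weakly Lindel\"of example from \cite{pollindelof}, and the associated-class criterion \ref{associatedcrit}.

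Parts (1) and (2) I would quote from the literature: (1) is the main theorem of \cite{argyroscorson} saying $C(K)$ is WLD exactly when $K$ is Corson with property M, and (2) is the standard equivalence between $X$ being WLD and $(B_{X^*}, w^*)$ being Corson; see \cite{kalendasurvey}. Given these, (3) is immediate: under $MA+\neg CH$, the paper already records (via 3.2(3) of \cite{argyroscorson}) that $\C_\kappa=\CM_\kappa$ for all uncountable $\kappa$, so (1) and (2) make $\C_\kappa$ and $\mathcal{WLD}_\kappa$ associated; since both classes are closed under subspaces, Lemma \ref{KBassociated} promotes this to a strong association.

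For (4) under CH, each subpart reduces via Proposition \ref{associatedcrit} to producing a single counterexample. For (a), take the Argyros--Mercourakis--Negrepontis Corson compact $K$ of weight $\omega_1$ without property M (3.12 of \cite{argyroscorson}); by (1), $C(K)\notin \mathcal{WLD}_{2^\omega}$, hence by (2), $B_{C(K)^*}$ is not Corson, so $B_{C(K)^*}\notin \C_{2^\omega}$. For (c) I need $X\in\mathcal{WLD}_{2^\omega}$ with $B_{X^*}\in \C_{2^\omega}\setminus\CM_{2^\omega}$, and the plan is to realize such a bad Corson compact as a weak$^*$-compact subset of the dual of a WLD space of density $\leq 2^\omega$ (embedding $K$ into a $\Sigma$-product and using the WLD space of summable functions on the index set). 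For (b), a parallel argument with Pol's compact $L$ in place of the AMN one, starting from $X=C(L)\in\mathcal{L}_{\omega_1}$, shows that $C(B_{X^*})$ cannot be weakly Lindel\"of of density $\leq 2^\omega$.

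For (5), since $\CM_\kappa$ is closed under subspaces, Proposition \ref{associatedcrit} reduces the task to showing that $K\in\CM_\kappa$ implies $B_{C(K)^*}\in\CM_\kappa$. Corsonness of $B_{C(K)^*}$ follows from (1) and (2) applied to $X=C(K)$. Property M of $B_{C(K)^*}$---every Radon measure has separable support---is the real content, and the plan is to extract it from the measure-theoretic fact that for $X\in\mathcal{WLD}$ every weak$^*$-compact subset of $X^*$ has property M, implicit in \cite{argyroscorson}. I expect this property-M transfer on dual balls, together with the WLD realization of bad Corson compacta needed in (4)(c), to be the main technical hurdles; the rest is bookkeeping with \ref{associatedcrit} and \ref{KBassociated}.
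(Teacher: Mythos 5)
Parts (1)--(3) and (4)(a) of your plan match the paper's proof (the paper cites both (1) and (2) from \cite{kalendasurvey} and gets (3) from $\C_\kappa=\CM_\kappa$ under MA$+\neg$CH plus Lemma \ref{KBassociated}). The serious problem is the ``fact'' you invoke for (5): that for $X\in\mathcal{WLD}$ every weak$^*$-compact subset of $X^*$ has property M. This is false under CH, and it directly contradicts part (4)(c) of the very theorem you are proving: combined with (1) and (2) it would give $C(B_{X^*})\in\mathcal{WLD}$ for every WLD $X$, so by Proposition \ref{associatedcrit} the class $\mathcal{WLD}_{2^\omega}$ would be associated with a class of compact spaces. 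The witness is precisely the Plebanek--Kalenda space $X_0$ (4.4 of \cite{plebanekcorson}) used in (4): under CH it is WLD while $B_{X_0^*}$ is Corson without property M. What (5) actually requires is the much more specific theorem 4.3 of \cite{plebanekcorson}: if $K$ is Corson with property M, then $B_{C(K)^*}$ is again Corson with property M. That is a genuine result about Radon measures on $P(K)$ for such $K$, not a formal consequence of the WLD theory, and it must be quoted (or proved), not replaced by the false general transfer principle.

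Your substitutes in (4)(b) and (4)(c) also do not work, and the missing ingredient in both is again the Plebanek--Kalenda example. For (c), you cannot softly realize the Argyros--Mercourakis--Negrepontis compact inside the dual ball of a WLD space: the ``space of summable functions on the index set'' $\ell_1(\Gamma)$ is not WLD for uncountable $\Gamma$ (its dual ball is $[-1,1]^\Gamma$, which is not even countably tight, hence not Corson), and since property M passes to closed subspaces, any embedding of a Corson compact without property M into $B_{X^*}$ with $X$ WLD forces $B_{X^*}$ itself to be Corson without property M --- i.e.\ your ``technical hurdle'' is exactly the Plebanek--Kalenda construction, which is the hard content here and should be cited rather than improvised. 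For (b), Pol's ladder-system compact $L$ is not Corson, so the equivalence ``property M iff $C(\cdot)$ is weakly Lindel\"of'' (3.5 of \cite{argyroscorson}), which holds only for Corson compacta, cannot be applied to $B_{C(L)^*}$; whether $C(B_{X^*})$ is weakly Lindel\"of whenever $X$ is, is essentially the open question the paper poses right after this theorem, so your claimed ``parallel argument'' has no support. The paper instead obtains (b) for free from the same $X_0$: $X_0\in\mathcal{WLD}_{2^\omega}\subseteq\mathcal L_{2^\omega}$ while $C(B_{X_0^*})\notin\mathcal L_{2^\omega}$ because $B_{X_0^*}$ is Corson without property M.
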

\begin{proof}
(1) is 5.2. and (2) is 4.17 of \cite{kalendasurvey}.
(3) follows from the above and the fact that $\C_\kappa=\CM_\kappa$ under
MA$+\neg$CH (3.2. (3) \cite{argyroscorson})

The results (4) under CH are proved based on two  examples, of Argyros, Mercourakis and Negrepontis of  a Corson
compact $K_0\in \C_{2^\omega}$ which does not have property M
(3.12  \cite{argyroscorson}) and its strengthening  (4.4. \cite{plebanekcorson}) due to Kalenda based on
examples of G. Plebanek from \cite{plebanekcorson} of a Banach space $X_0$ whose dual
ball is a Corson compact without property M.

Using the fact that for a Corson compact $K\in \C_\kappa$ having property $M$ is equivalent to
$C(K)\in \mathcal L_{\kappa}$ (3.5. \cite{argyroscorson}), we conclude that
$C(K_0)\not \in \mathcal L_{2^\omega}, \ \ C(B_{X_0}^*)\not \in \mathcal L_{2^\omega}$.
And so by the above equivalences we have 
$$K_0\in \C_{2^\omega}, \ \  B_{C(K_0)^*}\not\in \C_{2^\omega},\ \ 
X_0\in \mathcal{WLD}_{2^\omega}, \ \ C(B_{X_0})\not \in \mathcal L_{2^\omega}.$$
So using the fact that $\mathcal{WLD}_{2^\omega}\subseteq \mathcal L_{2^\omega}$ and
\ref{associatedcrit} we conclude (4).

For (5) by \ref{associatedcrit} it is enough to  have that $K\in \CM_\kappa$ implies
$B_{C(K)^*}\in \CM_\kappa$, and this is exactly 4.3. of \cite{plebanekcorson}.
\end{proof}
However we do not the answer to the following:
\begin{question} Is it consistent\footnote{ Many of the constructions mentioned in this paper are not absolute, that is 
the usual axioms of mathematics (ZFC) are not sufficient to carry them out.
Many follow from additional axioms which were shown to be equiconsistent
with ZFC (they do not lead to contradiction if ZFC does not lead itself) but the consistency of another
group was established only using the method of forcing.
The readers less familiar with these matters should consult when needed, for example, the textbook
\cite{kunen}. This lack of absoluteness is known to be unavoidable as
it  is shown in most cases that some other axioms or forcing
arguments imply the nonexistence of the constructions.} that the class $\mathcal L_{\kappa}$ 
of Lindel\"of Banach spaces in the weak topology of density $\leq \kappa$ is
associated with a class of compact spaces for an uncountable $\kappa$?
\end{question}
By \ref{associatedcrit} one needs to know if $C(B_{X^*})$ is weakly Lindel\"of
if $X$ is. A class of compact spaces as above would need to contain spaces which
are not Corson compact, as for example,
the ladder system space $K$ of
 \cite{pollindelof}.  Another question of a dual sort is the following:

\begin{question} What is the smallest  class $\mathcal K$ of compact spaces
which is associated with a class
 $\mathcal B$ of Banach spaces such that $\mathcal K$ contains the Stone spaces of minimally generated
Boolean algebras of cardinality $2^\omega$?
\end{question}

Minimally generated Boolean algebras were introduced in \cite{koppelbergminimal}. It is shown there
that their Stone spaces contain all dispersed compacta.
But such Stone spaces may not  be sequentially compact, so $\mathcal K$ must be bigger
than the class of all Radon-Nikod\'ym compacta of appropriate weight. See also \cite{borodulin, kunenminimal}.  As similar question 
for scattered spaces seems also open:

\begin{question} What is  the smallest class $\mathcal K$ of compact spaces
which is associated with a class
 $\mathcal B$ of Banach spaces such that $\mathcal K$ contains all
scattered compact spaces of uncountable weight $\kappa$?
\end{question}

This is related to Problem 24 of \cite{antonioracsam} where among others it is asked if every
Radon-Nikod\'ym compact space appears as a subspace of the dual ball $B_{C(K)^*}$
where $K$ is a scattered compact. Our questions seems to be a much weaker version of this question.
\ref{associationlist2} (3)  suggest also the following:

\begin{question} Characterize internally in ZFC the class of Banach spaces whose dual balls
are Corson compacta with property $M$.
\end{question}

\section{Types of universality}

\begin{definition} Let $\mathcal B$ be a class of Banach spaces.
A Banach space $X\in \mathcal B$ is said to be injectively isomorphically (isometrically) universal
for $\mathcal B$ if and only if for every $Y\in \mathcal B$ there is an isomorphic (isometric)
embedding of $Y$ into $X$.\par
\noindent A Banach space $X\in \mathcal B$ is said to be surjectively isomorphically (isometrcally) universal
for $\mathcal B$ if and only if  for every $Y\in \mathcal B$ there is a closed subspace
$Z\subseteq X$ such that $X/Z$ is isomorphic (isometric) to $Y$.
\end{definition}

Note that $X/Z$ is isomorphic to $Y$ for some closed subspace $Z\subseteq X$ if and only if
there is a linear surjection $T:X\rightarrow Y$.\par

\begin{definition} Let $\mathcal K$ be a class of compact Hausdorff spaces.
A compact space $K\in \mathcal K$ is said to be surjectively (injectively) universal for $\mathcal K$
if and only if for every $L\in \mathcal K$ there is a continuous surjection of
$K$ onto $L$ ($K$ maps onto every $K\in\mathcal K$). 
\end{definition}

We will omit some of the adjectives: for a Banach space universal means injectively isomorphically
universal, isometrically universal means injectively isometrically universal,
surjectively universal means isomorphically surjectively universal;  for compact space
universal means surjectively universal.

\begin{proposition}\label{CKtransfer} Let   $\mathcal B$  and $\mathcal K$ be associated classes of Banach spaces
and compact spaces respectively.
Suppose $K$ is  universal for $\mathcal K$, then
$C(K)$ is isometrically universal for $\mathcal B$.

\end{proposition}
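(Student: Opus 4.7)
The plan is to chain together the two halves of the association with the dualities already established in Propositions \ref{dualities} and \ref{semistone}. Fix an arbitrary $Y \in \mathcal{B}$; I need to produce an isometric embedding of $Y$ into $C(K)$, and I also need to verify that $C(K)$ itself lies in $\mathcal{B}$.

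First, since $K \in \mathcal{K}$ and the classes are associated, the first implication of Definition \ref{associated} gives $C(K) \in \mathcal{B}$, so $C(K)$ is a legitimate candidate for a universal object in $\mathcal{B}$. Next, the second implication of Definition \ref{associated} applied to $Y \in \mathcal{B}$ yields $B_{Y^*} \in \mathcal{K}$. By the assumed universality of $K$, there is a continuous surjection $\psi : K \to B_{Y^*}$.

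Now I invoke Proposition \ref{dualities}(1): a continuous surjection from $K$ onto $B_{Y^*}$ produces an isometric embedding $T_\psi : C(B_{Y^*}) \to C(K)$ given by $T_\psi(f) = f \circ \psi$. Independently, Proposition \ref{semistone}(2) says $Y$ itself embeds isometrically into $C(B_{Y^*})$ via the canonical map $y \mapsto \hat{y}$ sending a vector to its evaluation functional on $B_{Y^*}$. Composing these two isometric embeddings gives an isometric embedding of $Y$ into $C(K)$, as required.

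There is essentially no obstacle here; everything is a direct assembly of the previously recorded facts. The only point worth double-checking is that composing two isometric embeddings is again an isometric embedding, which is immediate. Since $Y \in \mathcal{B}$ was arbitrary and $C(K) \in \mathcal{B}$, this establishes that $C(K)$ is injectively isometrically universal for $\mathcal{B}$.
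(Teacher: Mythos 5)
Your proof is correct and follows essentially the same route as the paper: pass from $Y\in\mathcal B$ to $B_{Y^*}\in\mathcal K$, use universality of $K$ to surject onto $B_{Y^*}$, pull back to an isometric embedding $C(B_{Y^*})\hookrightarrow C(K)$, and compose with the canonical isometric embedding $Y\hookrightarrow C(B_{Y^*})$. If anything, your write-up is slightly more careful than the paper's: you explicitly verify $C(K)\in\mathcal B$ and you correctly attribute the first embedding to Proposition \ref{dualities}(1), whereas the paper's text cites \ref{semistone} there and says ``isomorphic'' where ``isometric'' is meant.
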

\begin{proof}  Let $X\in \mathcal B$, and so by the association \ref{associated} we have $B_{X^*}\in \mathcal K$
 and  then there is a continuous surjection
from $K$ onto $B_{X^*}$, and so $C(B_{X^*})$ is isomorphic to a subspace of $C(K)$ by \ref{semistone}.
As $X$ is isomorphic to a subspace of $C(B_{X^*})$ by \ref{semistone}, we get that $C(K)$ is isometrically universal for $\mathcal B$.
\end{proof}

Under the above assumptions on $\mathcal B$ and $\mathcal K$ it is not true that if 
 $K$ is injectively universal for $\mathcal K$, then
$C(K)$ is surjectively universal for $\mathcal B$. This perhaps explains less interest
in these properties, at least in the context of the duality. Indeed, there cannot be a 
linear bounded surjection
$T: C(K)\rightarrow l_1$  for some compact $K$ because it would be a complemented
subspace of such a space (Theorem VII.5 of \cite{diestel})
 and it would contradict classical results of Pe\l czy\'nski from \cite{projections}.
It follows that there is no surjectively universal Banach space
of the form $C(K)$ for  the class of separable Banach spaces, and of course
$[0,1]^\N$ is an injectively universal compact space for metrizable compact spaces.
The following is also a classical result:

\begin{theorem} $l_1(\kappa)$ is surjectively universal for $\mathcal B_\kappa$.
\end{theorem}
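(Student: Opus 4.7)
The plan is to realize every $Y\in\mathcal B_\kappa$ as a continuous linear image of $\ell_1(\kappa)$, which by the open mapping theorem (and the remark immediately preceding the statement) is equivalent to exhibiting $Y$ as a quotient of $\ell_1(\kappa)$. First I would fix $Y$ of density character at most $\kappa$ and choose a set $\{y_\alpha:\alpha<\kappa\}\subseteq B_Y$ that is dense in the closed unit ball of $Y$; such a set exists because a dense subset of $Y$ of cardinality $\leq\kappa$ can be rescaled and truncated to the unit ball while remaining dense there. I would then define $T\colon\ell_1(\kappa)\to Y$ by
\[
T\Bigl(\sum_{\alpha<\kappa} c_\alpha e_\alpha\Bigr)=\sum_{\alpha<\kappa} c_\alpha y_\alpha,
\]
where $(e_\alpha)_{\alpha<\kappa}$ is the canonical basis of $\ell_1(\kappa)$. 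The sum on the right converges absolutely in $Y$ since $\|y_\alpha\|\leq 1$, and one checks that $T$ is linear with $\|T\|\leq 1$.

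The main step is to show $T$ is onto. By construction $T(B_{\ell_1(\kappa)})\supseteq\{y_\alpha:\alpha<\kappa\}$, so $\overline{T(B_{\ell_1(\kappa)})}\supseteq B_Y$. I would then run the standard approximation argument: given $y\in B_Y$, recursively pick $u_n\in\ell_1(\kappa)$ with $\|u_n\|\leq 2^{-n+1}$ and $\|y-T(u_1+\cdots+u_n)\|<2^{-n}$, starting from an $u_1\in B_{\ell_1(\kappa)}$ approximating $y$ within $1/2$ and at each stage approximating the current residual $2^{n-1}(y-T(u_1+\cdots+u_{n-1}))\in B_Y$ by something in $T(B_{\ell_1(\kappa)})$ up to error $1/2$, then scaling by $2^{-(n-1)}$. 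Summing the geometric series in $\ell_1(\kappa)$ (which is complete) yields $u=\sum u_n$ with $\|u\|\leq 2$ and $Tu=y$. Hence $T(2B_{\ell_1(\kappa)})\supseteq B_Y$, and in particular $T$ is surjective.

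It remains to observe $\ell_1(\kappa)\in\mathcal B_\kappa$, which is clear because the finitely supported rational combinations of $(e_\alpha)_{\alpha<\kappa}$ form a dense subset of cardinality $\leq\kappa$. Thus for every $Y\in\mathcal B_\kappa$ the kernel $Z=\ker T$ is a closed subspace of $\ell_1(\kappa)$ with $\ell_1(\kappa)/Z$ isomorphic to $Y$, which is exactly the definition of surjective isomorphic universality.

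The argument has no real obstacle; the one point that deserves care is the iterative lifting from a norm-dense image of the unit ball to an actual surjection, since this is where completeness of $\ell_1(\kappa)$ (as opposed to mere density of the image) is used, and it is what provides the uniform bound $\|u\|\leq 2\|y\|$ that makes $T$ open, hence surjective by the open mapping theorem applied to $T\colon\ell_1(\kappa)\to Y$.
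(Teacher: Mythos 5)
Your proof is correct and is precisely the standard classical argument (dense subset of $B_Y$ of size $\leq\kappa$, the canonical norm-one map from $\ell_1(\kappa)$, and the iterated approximation using completeness to upgrade dense image of the ball to surjectivity); the paper states this theorem as a classical result and gives no proof, so there is nothing to compare against. The one point worth spelling out, which you handle correctly, is that a dense subset of $Y$ of cardinality $\leq\kappa$ yields a dense subset of $B_Y$ of the same cardinality, and that the final identification $Y\cong\ell_1(\kappa)/\ker T$ is exactly the remark preceding the statement.
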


We may consider some weaker versions of universality relevant in the context of
compact spaces.

\begin{definition} 
 Let $\mathcal B$ be a class of Banach spaces.
A Banach space $X\in \mathcal B$ is said to be weakly universal
for $\mathcal B$ if and only if  every $Y\in \mathcal B$ is isomorphic to a quotient of a   subspace
of $X$.\par
\par
\noindent Let $\mathcal K$ be a class of compact Hausdorff spaces.
A compact space $K\in \mathcal K$ is said to be weakly universal for $\mathcal K$
if and only if  every $L\in \mathcal K$ is a  continuous image of
a closed subspace of $K$.
\end{definition}

\begin{proposition}\label{weaklyequiv}
 Let $\mathcal B$ be a class of Banach spaces.
A Banach space $X\in \mathcal B$ is  weakly universal
for $\mathcal B$ if and only if  every $Y\in \mathcal B$ is 
isomorphic to a subspace  of a   quotient of $X$.\par
\par
\noindent Let $\mathcal K$ be a class of compact Hausdorff spaces.
A compact space $K\in \mathcal K$ is  weakly universal for $\mathcal K$
if and only if every $L\in \mathcal K$ is a closed subspace of a continuous image of  $K$.
\end{proposition}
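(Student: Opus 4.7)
The plan is to handle the Banach-space and compact-space equivalences separately; in both cases the key observation is that the operations of taking a (closed) subspace and of taking a quotient (respectively, a continuous image) commute up to passage through a larger intermediate object, built via the appropriate universal construction.

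For the Banach space part, I would argue as follows. Suppose first that every $Y\in\mathcal B$ satisfies $Y\cong Z/W$ with $W\subseteq Z\subseteq X$ closed subspaces. Letting $q:X\to X/W$ be the canonical quotient, the image $q[Z]$ is isometrically identified with $Z/W$ and is a closed subspace of $X/W$, so $Y$ sits as a subspace of the quotient $X/W$ of $X$. Conversely, if $Y$ is isomorphic to a closed subspace $Z\subseteq X/W$ for some closed $W\subseteq X$, then $q^{-1}(Z)$ is a closed subspace of $X$ containing $W$, and $q^{-1}(Z)/W$ is isometric to $Z$, exhibiting $Y$ as a quotient of a subspace of $X$. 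No real obstacle arises here beyond tracking the lattice of subspaces and quotients.

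The compact space equivalence is morally parallel but more delicate. One direction is routine: if $L\subseteq M$ is a closed subspace of a continuous image $\phi:K\to M$, then $\phi^{-1}(L)$ is a closed subspace of $K$ that continuously maps onto $L$. For the converse, suppose $K'\subseteq K$ is closed and $\pi:K'\to L$ is a continuous surjection; I would define an equivalence relation ${\sim}$ on $K$ by declaring $x\sim y$ iff $x=y$ or ($x,y\in K'$ and $\pi(x)=\pi(y)$). The quotient map $q:K\to K/{\sim}$ is a continuous surjection, and $q[K']$ should be identified with $L$ and exhibited as a closed subspace of $K/{\sim}$, realizing $L$ as a closed subspace of a continuous image of $K$.

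The main obstacle is to verify that $K/{\sim}$ is Hausdorff, since otherwise it is not a compact Hausdorff space. For this I would check that the graph $\{(x,y)\in K\times K : x\sim y\}$ equals the diagonal of $K\times K$ together with $(\pi\times\pi)^{-1}(\Delta_L)$, which is closed in $K\times K$ because $L$ is Hausdorff and $K'\times K'$ is closed in $K\times K$. Since $K$ is compact, a closed equivalence relation yields a compact Hausdorff quotient. A standard compact-to-Hausdorff bijection argument then upgrades the induced continuous bijection $K'/{\sim}\to L$ to a homeomorphism, identifying $L$ with the closed subspace $q[K']$ of $K/{\sim}$ and completing the proof.
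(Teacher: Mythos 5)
Your proof is correct, but for the nontrivial direction it takes a genuinely different route from the paper's. The easy direction (pulling a closed subobject of the image back along the surjection and restricting) is the same in both. For the converse, the paper extends the given surjection $f\colon B\to C$ with $B\subseteq A$ to a map $f'\colon A\to C'$ into a larger target containing $C$ (the injective envelope of $C$ for Banach spaces and, as the paper puts it, the Gleason space of $C$ for compacta), and then notes $C\subseteq f'[A]$. You instead argue internally: for Banach spaces you use the canonical isometric identification of $Z/W$ with the closed subspace $q[Z]$ of $X/W$ (the third isomorphism theorem, valid here because the quotient norms of a coset $z+W$ in $Z/W$ and in $X/W$ are both $\mathrm{dist}(z,W)$), and for compact spaces you collapse the fibres of $\pi\colon K'\to L$ inside $K$ by a closed equivalence relation, deducing Hausdorffness of the quotient from closedness of the graph. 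Your versions are more elementary (no injective envelopes or extension theorems needed) and, in the Banach case, avoid the delicate point of whether the image of the extended operator is closed: your $q[Z]$ is automatically closed since $Z/W$ is complete and the embedding is isometric. What the paper's device buys is uniformity, one extension argument covering both categories at once. The details you flag on the compact side --- closedness of $\Delta_K\cup(\pi\times\pi)^{-1}(\Delta_L)$, the compact-to-Hausdorff bijection, and the agreement of the subspace and quotient topologies on $q[K']$ --- all check out, so the argument is complete as sketched.
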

\begin{proof}
If $f:A\rightarrow B\supseteq C$ and $f$ is a surjection, then
$f\restriction f^{-1}[C]\rightarrow C$ is a surjection and $A\supseteq f^{-1}[C]$
so we have one way implications.

To obtain the other implications we need to use the existence of injective
envelopes or Banach spaces or Gleason spaces of compact spaces. 
Given a surjection $f:B\rightarrow C$ and $B\subseteq A$
we can extend $f$ to $f':A\rightarrow C'$ where $C'$ is the injective envelope
in the case of Banach spaces or the Gleason space of $C$ in the case of compact spaces.
Now $C\subseteq f'[A]$ and $f'$ is onto $f'[A]$.
\end{proof}

\begin{proposition}\label{holsztynski}  Let   $\mathcal B$  and $\mathcal K$ be associated classes of Banach spaces
and compact spaces respectively.\par
\begin{enumerate}
\item If $C(K)$ is isometrically  universal for $\mathcal B$, then $K$ is weakly universal for $\mathcal K$.
\item If $K$ is weakly universal for $\mathcal K$, then $C(K)$ is weakly universal for $\mathcal B$.
\end{enumerate}
\end{proposition}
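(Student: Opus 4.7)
The plan is to handle the two implications by quite different tools. For (1), the only non-formal ingredient needed is the classical theorem of Holsztynski: every linear isometric embedding $T\colon C(L)\to C(K)$ between spaces of continuous functions on compact Hausdorff spaces is a weighted composition operator on a closed subset of $K$, in the sense that there exist a closed $K_0\subseteq K$, a continuous unimodular $h\colon K_0\to\{-1,+1\}$, and a continuous surjection $\phi\colon K_0\to L$ such that $(Tf)(x)=h(x)\,f(\phi(x))$ for all $x\in K_0$ and $f\in C(L)$. Granted this, for any $L\in\mathcal K$ the association of $\mathcal K$ and $\mathcal B$ yields $C(L)\in\mathcal B$; isometric universality of $C(K)$ then provides an isometric embedding $C(L)\hookrightarrow C(K)$; applying Holsztynski produces a closed $K_0\subseteq K$ together with a continuous surjection $\phi\colon K_0\to L$, which is exactly the statement that $K$ is weakly universal for $\mathcal K$.

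For (2) no external input is needed; the argument is a diagram chase using only the propositions already established. Fix $Y\in\mathcal B$; then $B_{Y^*}\in\mathcal K$ by association, and by weak universality of $K$ there is a closed $K_0\subseteq K$ admitting a continuous surjection $\phi\colon K_0\twoheadrightarrow B_{Y^*}$. Proposition \ref{dualities}(2) turns the inclusion $K_0\subseteq K$ into an isometric quotient map $C(K)\twoheadrightarrow C(K_0)$; Proposition \ref{dualities}(1) turns $\phi$ into an isometric embedding $C(B_{Y^*})\hookrightarrow C(K_0)$; and Proposition \ref{semistone}(2) embeds $Y$ isometrically into $C(B_{Y^*})$. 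Composing these three maps exhibits $Y$ as isomorphic to a subspace of a quotient of $C(K)$; by Proposition \ref{weaklyequiv} this is equivalent to $Y$ being isomorphic to a quotient of a subspace of $C(K)$, so $C(K)$ is weakly universal for $\mathcal B$.

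The main obstacle is therefore concentrated in part (1): upgrading a mere isometric embedding of $C(L)$ into $C(K)$ to genuine topological information about $L$ as a continuous image of a closed subspace of $K$ is not formal and relies on Holsztynski's description of $C$-space isometries as weighted composition operators. Everything else, including the mild asymmetry between ``quotient of a subspace'' and ``subspace of a quotient,'' is absorbed by the equivalences in Proposition \ref{weaklyequiv} and the routine functorial content of Propositions \ref{dualities} and \ref{semistone}.
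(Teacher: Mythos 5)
Your proposal is correct and follows essentially the same route as the paper: part (1) is exactly the application of Holszty\'nski's theorem to the isometric embedding $C(L)\hookrightarrow C(K)$ obtained from the association, and part (2) is the same chain $Y\hookrightarrow C(B_{Y^*})\hookrightarrow C(K_0)$ with $C(K_0)$ a quotient of $C(K)$, the paper merely leaving the final appeal to Proposition \ref{weaklyequiv} implicit where you spell it out.
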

\begin{proof} Let $L\in \mathcal K$. We can use the Holszty\'nski theorem
(see e.g \cite{semadeni}) which says that an isometry $T:C(L)\rightarrow C(K)$
yields a continuous map from a closed subspace of $K$ onto $L$ as required.

Now, let $X\in\mathcal B$. We have that $B_{X^*}\in \mathcal K$ and so there is a closed subspace $K'$ of
$K$ which maps onto $B_{X^*}$. By \ref{dualities} and \ref{semistone} we have $X$ in $C(B_{X^*})$ in $C(K')$
which is  a quotient of $C(K)$ as required.
\end{proof}

The proposition below is used essentially in couple of important papers concerning the
non-existence of universal Banach spaces (\cite{stevofunktor}, \cite{argyrosben}): Having proved that there is no weakly universal compactum,
we conclude that there is no universal Banach space. It seems that it is the only 
 topological method used in the literature of proving the
nonexistence of universal Banach spaces.

\begin{proposition}\label{weaklyutrick} Suppose that $\mathcal B$ and $\mathcal K$ are associated classes
of Banach spaces and compact spaces respectively.
$X$ is weakly universal for $\mathcal B$, if and only if
$B_{X^*}$ is weakly universal for $\mathcal K$. 
\end{proposition}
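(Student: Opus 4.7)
The plan is to dualize witnesses of weak universality through Propositions \ref{dualities} and \ref{semistone}, using the association to cross between $\mathcal K$ and $\mathcal B$ and Proposition \ref{weaklyequiv} to interchange ``subspace of a quotient'' with ``quotient of a subspace'' freely on both sides.

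For the forward implication, fix $L\in\mathcal K$. The association yields $C(L)\in\mathcal B$, so weak universality of $X$ produces an isomorphic embedding $C(L)\hookrightarrow X/Z$ for some closed $Z\subseteq X$. Applying \ref{dualities}(4) to the quotient $X\twoheadrightarrow X/Z$ places $B_{(X/Z)^*}$ homeomorphically inside $B_{X^*}$ as a closed subspace, and applying \ref{dualities}(3) to $C(L)\hookrightarrow X/Z$ supplies a continuous surjection from $B_{(X/Z)^*}$ onto $B_{C(L)^*}$. Since $L\hookrightarrow B_{C(L)^*}$ by \ref{semistone}(1), pulling $L$ back through this surjection exhibits it as a continuous image of a closed subspace of $B_{X^*}$, proving weak universality of $B_{X^*}$.

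For the converse, fix $Y\in\mathcal B$. The association yields $B_{Y^*}\in\mathcal K$, so there is a closed subspace $S\subseteq B_{X^*}$ and a continuous surjection $\phi\colon S\twoheadrightarrow B_{Y^*}$. Dualizing: \ref{dualities}(1) gives an isometric embedding $C(B_{Y^*})\hookrightarrow C(S)$, \ref{dualities}(2) realises $C(S)$ as a quotient of $C(B_{X^*})$, and \ref{semistone}(2) gives $Y\hookrightarrow C(B_{Y^*})$; composing, $Y$ appears as a subspace of a quotient of $C(B_{X^*})$. The intended final move is to descend to a subspace of a quotient of $X$ itself via the preannihilator $Z_S={}^{\perp}S\subseteq X$: since $S\subseteq B_{X^*}\cap Z_S^\perp=B_{(X/Z_S)^*}$, the continuous surjection $\phi$ ought to dualize into an isomorphic embedding $Y\hookrightarrow X/Z_S$.

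The step I expect to be most delicate is this last descent from $C(B_{X^*})$ down to $X$: the isometric inclusion $X\hookrightarrow C(B_{X^*})$ from \ref{semistone}(2) runs in the wrong direction for a direct push-through of the quotient of $C(B_{X^*})$ produced above. The plan is therefore to bypass $C(B_{X^*})$ and produce the embedding $Y\hookrightarrow X/Z_S$ directly, by using Hahn--Banach and the identification $(X/Z_S)^*=\overline{\mathrm{span}}^{w^*}(S)$ to check that dualizing $\phi$ indeed lands in the desired quotient of $X$; the injective-envelope tool from \ref{weaklyequiv} may be needed to absorb any slack between $S$ and its weak-$\ast$ closed absolutely convex hull.
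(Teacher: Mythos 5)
Your forward implication is essentially the paper's argument, run through the other half of \ref{weaklyequiv}: the paper writes $C(L)$ as a quotient of a subspace $Y\subseteq X$ and concludes that $L$ is a closed subspace of a continuous image of $B_{X^*}$, while you write $C(L)$ as a subspace of a quotient $X/Z$ and conclude that $L$ is a continuous image of a closed subspace of $B_{X^*}$; the two are interchangeable. One correction is needed: \ref{dualities}(3) is stated for isometric subspaces, and the paper explicitly warns that its isomorphic version is false, so for a merely isomorphic embedding $T\colon C(L)\to X/Z$ you may not claim that $B_{(X/Z)^*}$ maps \emph{onto} $B_{C(L)^*}$. What is true, and what the paper uses via the factor $\|T\|$, is that $T^*\left[\|T^{-1}\|B_{(X/Z)^*}\right]\supseteq B_{C(L)^*}\supseteq L$, so $L$ is still a continuous image of a compact subset of a rescaled (hence homeomorphic) copy of $B_{(X/Z)^*}$; that suffices, but the rescaling has to be said.

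The converse is where the real problem lies. Up to the point where you exhibit $Y$ as a subspace of a quotient of $C(B_{X^*})$ you are reproducing the paper's proof exactly, and the paper stops there. Your proposed further descent to an embedding $Y\hookrightarrow X/Z_S$ cannot work as described: the witness $\phi\colon S\to B_{Y^*}$ is an arbitrary continuous surjection of compact spaces, not the restriction of the adjoint of any operator, so there is nothing linear to dualize. The only linear map the construction yields is $y\mapsto\bigl(s\mapsto\phi(s)(y)\bigr)$, an isometry of $Y$ into $C(S)$, which has no relation to $X/Z_S$; the evaluation map $X/Z_S\to C(S)$ runs in the opposite direction and need not even be an embedding, since the weak$^*$ closed absolutely convex hull of $S$ need not contain a multiple of $B_{(X/Z_S)^*}$, and neither Hahn--Banach nor the injective-envelope device of \ref{weaklyequiv} repairs this. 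So you are right that something extra would be needed to land in a quotient of $X$ rather than of $C(B_{X^*})$ (the paper's own converse in fact ends at the same place you reached before attempting the descent, relying on $C(B_{X^*})\in\mathcal B$ via the association), but the route through the preannihilator is a dead end, and as written your converse is incomplete.
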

\begin{proof}
Let $K\in \mathcal K$. Since $X$ is weakly universal for $\mathcal B\ni C(K)$, there is 
a subspace $Y$ of $X$ and a surjective operator
 $T: Y\rightarrow C(K)$.   
Restrictions of functionals from $B_{X^*}$ defined on $X$ to  $Y$ give a continuous
surjection from $B_{X^*}$ onto $B_{Y^*}$ by \ref{dualities}. But 
$T^*$ continuously embeds $B_{C(K)^*}$ into $||T||B_{Y^*}$. Using the fact that
 $K$ is homeomorphic  to a subspace of $B_{C(K)^*}$ by \ref{semistone} 
we get that $K$ is homeomorphic to a subspace of  $B_{Y^*}$ which is a 
a continuous image of $B_{X^*}$ as required.

If  $Y\in \mathcal B$, then $B_{Y^*}$ is a continuous image of a subspace $L$ of $B_{X^*}$ 
so $C(B_{Y^*})$ is isometric to a subspace of $C(L)$ which is a quotient of $C(B_{X^*})$ by \ref{dualities}.
$Y$ s isometric to a subspace of $C(B_{Y^*})$ by \ref{semistone}, so $Y$ is isometric to a subspace of 
 $C(L)$ which is a quotient of $C(B_{X^*})$.
\end{proof}

Weakly universal above cannot be replaced by  universal:
$C([0,1])$ is universal for separable Banach spaces but neither $[0,1]$ 
nor $B_{C([0,1])^*}$ are  surjectively universal for metrizable compact spaces because
they do not map on disconnected spaces. Finally let us make a few remarks on how little is known on
the the relation between   isomorphically universal  and isometrically universal Banach spaces in the nonseparable case. First note the following:

\begin{proposition} There is a separable Banach space which is isomorphically
universal for separable Banach spaces and is not
isometrically universal for separable Banach spaces.
\end{proposition}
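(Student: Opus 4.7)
The plan is to take a separable isometrically universal Banach space, such as $C([0,1])$ (by the classical Banach--Mazur theorem), and to deform its norm so as to preserve the isomorphism type while destroying the isometric universality. The obstruction will come from strict convexity: if $X$ is strictly convex (or rotund), then every subspace of $X$ is strictly convex, so $X$ cannot contain an isometric copy of any non-strictly-convex separable Banach space, for instance $\ell_\infty^2$ or $c_0$.

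First I would recall Day's renorming theorem: every separable Banach space admits an equivalent strictly convex norm. An explicit construction suffices: if $(f_n)_{n\in\N}$ is a norm-dense sequence in $B_{X^*}$, then
\[
|||x|||^2 = \|x\|^2 + \sum_{n\in\N} 2^{-n} f_n(x)^2
\]
defines a strictly convex norm equivalent to $\|\cdot\|$. Apply this with $X = C([0,1])$ to obtain an equivalent strictly convex norm $|||\cdot|||$ on $C([0,1])$.

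Next I would verify the two claims about $Y=(C([0,1]),|||\cdot|||)$. Since $|||\cdot|||$ is equivalent to the supremum norm, the identity map is a linear isomorphism between $Y$ and $C([0,1])$; because $C([0,1])$ is isometrically (in particular isomorphically) universal for separable Banach spaces by the Banach--Mazur theorem, composition with this identity shows that $Y$ is isomorphically universal for separable Banach spaces. On the other hand, $Y$ is strictly convex by construction, and strict convexity is inherited by linear subspaces with the induced norm. Therefore $Y$ contains no isometric copy of $\ell_\infty^2$ (whose unit ball has flat sides), even though $\ell_\infty^2$ is a perfectly good separable Banach space. This shows that $Y$ fails to be isometrically universal for separable Banach spaces.

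There is essentially no serious obstacle here; the only nontrivial input is Day's renorming theorem, which is standard. One could equally well cite the existence of an equivalent locally uniformly rotund norm on every separable Banach space, but strict convexity is all that is needed for the argument above.
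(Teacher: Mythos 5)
Your proposal is correct and follows essentially the same route as the paper: the paper also takes a strictly convex renorming of $C([0,1])$ (citing Theorem 9 of Clarkson rather than constructing it explicitly) and observes that the renormed space remains isomorphically universal but, since strict convexity passes to subspaces, cannot contain isometric copies of non-strictly-convex spaces.
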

\begin{proof} Consider a strictly convex renorming ($||x+y||=2$ for $||x||=||y||=1$ implies $x=y$) of $C([0,1])$ (which exists by Theorem 9 of \cite{clarkson}) and notice that this space continues to be an isomorphically universal space but cannot isometrically include spaces whose norm is not strictly convex.
\end{proof}

However we do not know a corresponding result for nonseparable Banach spaces:

\begin{question}\label{isoiso} Is every universal Banach space of density $\leq 2^\omega$
isomorphic to an isometrically universal Banach space of density $\leq 2^\omega$?
Is it isometrically universal itself?
\end{question}

This question is also open for any of the classes of nonseparable Banach spaces we consider
in this paper.
Actually it is known that for example $\ell_\infty/c_0$ does not have
a strictly convex renorming (\cite{bourgain}).
In particular we do not know the following
general:

\begin{question} Is there a property $P$ of norms such that:
\begin{itemize}
\item If $(X, || ||)$ has $P$ and $Y$ is a closed subspace of $X$, then $(Y, || ||\restriction Y)$
has $P$ as well
\item Not all norms on Banach spaces of density $\leq 2^\omega$  have $P$
\item $C(K)$ spaces have equivalent norm with property $P$
\end{itemize}
\end{question}

The above may not be a difficult question but perhaps $P$  as above is not natural in the context of
the renorming theory. This is because we usually seek as good renormings as possible for a restricted class of
Banach spaces and above we want a bad
renorming for all spaces. A renorming of isometrically universal Banach space with property $P$
would give a universal space which is not an isometrically universal Banach space. However, there are some limitations here, for example any
Banach space isomorphic to $\ell_\infty/c_0$ contains a subspace isometric to $\ell_\infty$ (\cite{part}).
As the existence of universal nonseparable Banach spaces is undecidable
problem for many classes (see the next section), the above issue 
of isometrically universal versus isomorphically universal can be expressed in a much stronger way.
One example is the following:

\begin{question} Are any of the statements below equivalent in ZFC:
\begin{itemize}
\item there is a  universal Banach space of density $\leq 2^\omega$,
\item  there is an
isometrically  universal Banach space of density $\leq 2^\omega$,
\item there is  a  universal compact space of weight $\leq 2^\omega$?
\end{itemize}
\end{question}

This question is also open for any of the classes of nonseparable Banach spaces we consider
in this paper.
M. Krupski and W. Marciszewski constructed in   \cite{krupski} the first consistent  example of a Banach space which isomorphically 
embeds in $l_\infty/c_0$ but does not embed isometrically. 
This does not answer Question \ref{isoiso} because in the model considered in \cite{krupski}
the space  $l_\infty/c_0$ is not universal.

\section{The existence and the non-existence of universal spaces}

As seen in the previous section we have general results (\ref{CKtransfer}) allowing us to obtain universal Banach spaces
from universal compact spaces.
All positive  universality results for Banach spaces which we include here come from the universality of
compact space via the arguments described above. In some cases they could be also obtained by general model-theoretic
arguments, but as far as we know, in all these cases we can also build the corresponding universal compact space.

 However the negative universality results
require usually some other ideas, unless we can prove that there is no weakly universal compact space of our class 
like in \cite{argyrosben, stevofunktor} and then apply \ref{weaklyutrick}.
The other ideas exploited in the relevant literature come from graphs (\cite{bell}), ordinal indices in the spirit
of Cantor-Bendixon height (\cite{szlenk}, \cite{wojtaszczyk}, \cite{asplunduniversal}),
or forcing genericity(\cite{universal}, \cite{ug}).

\subsection{Uniformly Eberlein compacta and Hilbert generated Banach spaces.}  First we note that the definition of
a uniform Eberlein compactum is so restrictive that  it is not surprising that
there are weakly universal objects in this class.

\begin{proposition}$B_{l_2(\kappa)}$ is  injectively universal for  $\UE_\kappa$. In particular 
$C(B_{l_2(\kappa)})$ is weakly universal for  $\overline{\mathcal{H}}_\kappa$.
\end{proposition}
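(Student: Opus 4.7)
The plan is to prove injective universality of $B_{l_2(\kappa)}$ for $\UE_\kappa$ directly, and then deduce the statement about $C(B_{l_2(\kappa)})$ from Proposition \ref{holsztynski}(2) and the $K$-association of $\UE_\kappa$ with $\overline{\mathcal{H}}_\kappa$ from Theorem \ref{associationlist}(1).

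First, I would verify that $B_{l_2(\kappa)}$ itself belongs to $\UE_\kappa$: it is by definition a weakly compact subset of a Hilbert space, and since the weak and weak$^*$ topologies agree on reflexive spaces, its weight in the weak topology is bounded by the density character of $l_2(\kappa)$, which is $\kappa$. Next, take an arbitrary $K \in \UE_\kappa$ presented as a weakly compact subset of some Hilbert space $H$. By replacing $H$ with the closed linear span of $K$, I may assume $H = \overline{\operatorname{span}}(K)$. The key step is to show $\operatorname{dens}(H) \leq w(K)$: pick a weakly dense subset $D \subseteq K$ with $|D| = w(K) \leq \kappa$; then $\operatorname{span}_\Q(D)$ is weakly dense in $H$, and being convex, it is also norm-dense, so $\operatorname{dens}(H) \leq w(K) \leq \kappa$. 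Hence $H$ is isometrically isomorphic to a closed subspace of $l_2(\kappa)$. Since $K$ is weakly compact, it is norm-bounded, so after scaling by a positive real we may assume $K \subseteq B_H \subseteq B_{l_2(\kappa)}$. The inclusion is a weak-to-weak homeomorphism onto its image, giving the desired topological embedding of $K$ as a closed subspace of $B_{l_2(\kappa)}$. This establishes injective universality.

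For the second assertion, by Theorem \ref{associationlist}(1) the classes $\UE_\kappa$ and $\overline{\mathcal{H}}_\kappa$ are associated, and in particular $C(B_{l_2(\kappa)}) \in \overline{\mathcal{H}}_\kappa$. Injective universality trivially implies weak universality (embed $L$ as a closed subspace and use the identity as the continuous surjection onto $L$), so $B_{l_2(\kappa)}$ is weakly universal for $\UE_\kappa$. Proposition \ref{holsztynski}(2) then yields that $C(B_{l_2(\kappa)})$ is weakly universal for $\overline{\mathcal{H}}_\kappa$.

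The only non-bookkeeping ingredient is the equality (in fact the inequality) $\operatorname{dens}(\overline{\operatorname{span}}(K)) \leq w(K)$ for a weakly compact $K$ in a Hilbert space; everything else reduces to reflexivity of $l_2(\kappa)$, boundedness of weakly compact sets, and the already established propositions. I therefore expect no serious obstacle — the proof should be short, with the density computation being the only place where one has to invoke a nontrivial (but standard) observation.
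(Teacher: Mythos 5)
Your proof is correct and follows essentially the same route as the paper: realize a given $K\in\UE_\kappa$ as a bounded weakly compact subset of $l_2(\kappa)$ (the paper's phrase ``by taking appropriate quotients'' is exactly your closed-linear-span and density computation), rescale into the unit ball, and then transfer the weak universality to $C(B_{l_2(\kappa)})$ via the general machinery of Section 4. The only cosmetic difference is that you invoke Proposition \ref{holsztynski}(2) where the paper cites Proposition \ref{weaklyutrick}; both are equally routine here.
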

\begin{proof}
Note that by taking appropriate quotients,  every uniform Eberlein compact space of weight $\kappa$
can be continuously embedded in the Hilbert space $l_2(\kappa)$ with the weak topology
and the image of this embedding  must be a  bounded set. As the unit ball
in a Hilbert space is weakly compact,  it is  injectively universal, and so weakly universal uniform Eberlein compact space. 
By \ref{weaklyutrick}, this gives that $C(B_{l_2(\kappa)})$ is a weakly universal in  $\overline{\mathcal{H}}_\kappa$.
\end{proof}

A result of Benyamini, Rudin, Wage provides another weakly universal compact space.

\begin{proposition}[\cite{BRW}]\label{wuuniforme}
${A(\kappa)}^\N$ is weakly universal in $\UE_{\kappa}$.
In particular $C({A(\kappa)}^\N)$ is  weakly universal for $\overline{\mathcal{H}}_\kappa$.
\end{proposition}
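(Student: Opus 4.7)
The plan is to verify first that $A(\kappa)^{\N}\in\UE_\kappa$, then that every $L\in\UE_\kappa$ is a continuous image of a closed subspace of $A(\kappa)^{\N}$, and finally to deduce the ``in particular'' clause from Proposition \ref{weaklyutrick} using the $K$-association of Theorem \ref{associationlist}(1).

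The first point is routine. $A(\kappa)$ is homeomorphic to the weakly closed set $\{0\}\cup\{e_\alpha:\alpha<\kappa\}\subseteq B_{l_2(\kappa)}$, since $e_\alpha\to 0$ weakly along the cofinite filter on $\kappa$. Countable products of uniform Eberlein compacta stay in $\UE_\kappa$ via the $l_2$-sum embedding $(x_n)_n\mapsto\sum_n 2^{-n}x_n$ into $B_{l_2(\bigsqcup_n\Gamma_n)}$ (suitably rescaled), which is a weak homeomorphism onto its image; so $A(\kappa)^{\N}\in\UE_\kappa$.

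For the weak universality of $A(\kappa)^{\N}$, fix $L\in\UE_\kappa$ and, after rescaling, realize $L$ as a weakly closed subset of $B_{l_2(\kappa)}$. I would assign to each $x\in L$ a ``code'' in $A(\kappa)^{\N}$ which, for each $n$ in a fixed enumeration of a countable dense family of constraints (a rational value together with a rational tolerance), records either an index $\alpha<\kappa$ witnessing the constraint for $x$, or the point at infinity when no such witness exists. Let $K\subseteq A(\kappa)^{\N}$ be the set of all codes of all $x\in L$, and decode a code to the unique $x\in L$ consistent with it; density of $\Q$ in $\R$ ensures that every coefficient $x(\alpha)$, hence $x$ itself, is uniquely determined by any one of its codes.

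The main obstacle is to arrange the encoding so that $K$ is closed in $A(\kappa)^{\N}$ and the decoding $K\to L$ is continuous. Closedness rests on the weak compactness of $L$ together with the uniform $l_2$-bound, which lets coordinatewise limits of codes be realized by elements of $L$ via weak cluster points; continuity of decoding uses that the weak topology on a norm-bounded subset of $l_2(\kappa)$ is generated by coordinate functionals, and each coordinate of the decoded $x$ is read off continuously from the rational approximants supplied by the code. Both steps depend on the \emph{uniform} Hilbert boundedness of $L$, which is exactly the feature separating $\UE_\kappa$ from general Eberlein compacta. Once weak universality of $A(\kappa)^{\N}$ is established, the second clause follows formally: by \ref{semistone} $A(\kappa)^{\N}$ sits inside $B_{C(A(\kappa)^{\N})^*}$, so the latter dual ball is also weakly universal in $\UE_\kappa$, and Proposition \ref{weaklyutrick} combined with Theorem \ref{associationlist}(1) yields that $C(A(\kappa)^{\N})$ is weakly universal in $\overline{\mathcal{H}}_\kappa$.
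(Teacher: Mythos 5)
Your handling of the ``in particular'' clause is correct, though it takes a small detour relative to the paper: the paper's entire proof is ``apply Proposition \ref{holsztynski}(2)'' (weak universality of $K$ in $\mathcal K$ passes directly to weak universality of $C(K)$ in the associated class), whereas you route the argument through $B_{C(A(\kappa)^\N)^*}$ and Proposition \ref{weaklyutrick}; both deductions are valid. The substantive divergence is that the paper does not prove the first clause at all --- the weak universality of $A(\kappa)^\N$ in $\UE_\kappa$ is quoted from \cite{BRW} --- while you attempt to reprove it, and your sketch of that part has a genuine gap.

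The gap is in the coding scheme. For each rational constraint $(q,\varepsilon)$ you record a \emph{single} index $\alpha$ witnessing $|x(\alpha)-q|<\varepsilon$, or the point at infinity. Such a code need not determine $x$: if two coordinates $\alpha\neq\beta$ satisfy $x(\alpha)=x(\beta)$, every constraint witnessed by $\beta$ is also witnessed by $\alpha$, so a legitimate code may never mention $\beta$, and the decoder cannot recover $x(\beta)$. Hence the claim that ``every coefficient $x(\alpha)$, hence $x$ itself, is uniquely determined by any one of its codes'' fails and the decoding map is not well defined. The standard repair (essentially the argument of \cite{BRW}) is to record, for each $n\in\N$, the \emph{entire} finite set $F_n(x)=\{\alpha:|x(\alpha)|>1/n\}$: the uniform bound $\|x\|_2\le 1$ for $x\in L\subseteq B_{l_2(\kappa)}$ forces $|F_n(x)|<n^2$, so $F_n(x)$ can be listed (with repetitions or $\infty$-padding) as a point of $A(\kappa)^{n^2}$, and one works inside $\prod_n A(\kappa)^{n^2}\cong A(\kappa)^\N$. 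This is where the uniformity is genuinely used, and it is also what makes the closedness of the set of codes and the continuity of the decoding verifiable coordinatewise --- the two steps you explicitly identify as ``the main obstacle'' but do not carry out. Your verification that $A(\kappa)^\N\in\UE_\kappa$ is fine.
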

\begin{proof} Apply \ref{holsztynski} (2).
\end{proof}

However, we have the following result of M. Bell:

\begin{theorem}[\cite{bellpolyadic}]\label{bellpolyadic} There is a compact $K\in \UE_{\omega_1}$
which is not a continuous image of any space ${A(\kappa)}^\N$.
In particular the space
${A(\kappa)}^\N$ is   not universal in $\UE_{\kappa}$ for
an uncountable $\kappa$.
\end{theorem}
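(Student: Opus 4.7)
The plan is to isolate a topological invariant $\phi$ of compact Hausdorff spaces that (a) is monotone under continuous surjections, (b) holds for every $A(\kappa)^{\N}$ regardless of $\kappa$, and (c) can be refuted by some explicit construction inside $\UE_{\omega_1}$. Because $A(\kappa)^{\N}$ is a countable product of one-point compactifications of discrete sets, its basic open sets are highly structured: any basic neighborhood of a point is determined by finitely many coordinates together with an essentially discrete choice in each, and this makes the local picture very ``branching''. A natural formalization of $\phi$ is the existence, at every point, of a $\pi$-base of clopen sets which can be written as a countable union of pairwise disjoint clopen families (a $\sigma$-disjoint $\pi$-base), with the further property that this $\sigma$-disjoint structure is witnessed uniformly in some dense set of points.

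First I would verify that every $A(\kappa)^{\N}$ satisfies $\phi$: at any eventually-$\infty$ sequence $p$, the sets of the form $\{\alpha_{1}\}\times\cdots\times\{\alpha_{n}\}\times\prod_{k>n}A(\kappa)$ for fixed $n$ and varying isolated points $\alpha_{i}\in\kappa$ form a visibly pairwise disjoint clopen family, and the union over $n$ is a $\pi$-base at $p$. Then I would show that $\phi$ passes to continuous surjective images by pulling back an open neighborhood, picking a point of $f^{-1}(y)$ with the required $\pi$-base, and using compactness of the fibers to push a $\sigma$-disjoint refinement forward; this step should be routine by standard $\pi$-base techniques once the invariant is pinned down correctly.

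Next I would construct $K\in\UE_{\omega_{1}}$ negating $\phi$. Take a Lusin-type family $\mathcal{A}=\{A_{\alpha}:\alpha<\omega_{1}\}\subseteq[\omega]^{\omega}$ --- one where every uncountable subfamily has uniformly small pairwise intersections --- and set $K$ to be the weak closure in a bounded subset of $\ell_{2}(\omega_{1}\cup\omega)$ of the vectors $v_{\alpha}=e_{\alpha}+|A_{\alpha}|^{-1/2}\chi_{A_{\alpha}}$ together with $0$, with suitable normalization. Uniform boundedness inside a Hilbert ball with the weak topology automatically places $K$ in $\UE_{\omega_{1}}$, and has weight $\omega_{1}$ because the coordinates $\alpha<\omega_{1}$ separate points. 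The Lusin property of $\mathcal{A}$ should prevent any countable $\sigma$-disjoint family of clopen sets from being cofinal at an accumulation point of $K$: uncountably many of the $v_{\alpha}$ accumulate at any such point, and distinct $v_{\alpha}$ have almost orthogonal ``finite parts'', so any attempted $\sigma$-disjoint $\pi$-base would have to witness the full $\omega_{1}$-branching combinatorics of $\mathcal{A}$, which by the Lusin hypothesis is impossible within $\aleph_{0}$ disjoint layers.

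The hard part will be calibrating $\phi$ precisely. Uniform Eberlein compacta are Fr\'echet--Urysohn and carry many first-countable points, so a naive invariant either trivially holds in every uniform Eberlein space (killing the argument) or already fails in $A(\kappa)^{\N}$ (killing the monotonicity step). Bell's invariant must cut finely between these extremes, and then the combinatorial strength of the Lusin family must be exactly matched to its negation in $K$. I expect that essentially all the actual difficulty lies in this matching: verifying that $K$ as defined still embeds isometrically into a weakly compact ball (so that it truly is uniform Eberlein), and that the Lusin hypothesis --- which is available in ZFC --- gives the sharp failure of $\phi$ at every point of $K$, rather than merely at some points.
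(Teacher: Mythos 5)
The survey does not reproduce a proof of this theorem --- it is quoted from Bell --- so your attempt has to be measured against Bell's argument. Your high-level template (find an invariant $\phi$ that is monotone under continuous surjections, holds in every $A(\kappa)^{\N}$, and fails for an explicit member of $\UE_{\omega_1}$) is indeed the shape of that argument, but the concrete invariant you propose cannot work, and the step you wave off as ``routine by standard $\pi$-base techniques'' is exactly where it dies. Any property formulated in terms of clopen sets is not inherited by continuous images of $A(\kappa)^{\N}$: mapping $A(\omega)$ onto $\{0,1\}$ coordinatewise exhibits $2^{\N}$, hence $[0,1]$, as a continuous image of $A(\omega)^{\N}$, and $[0,1]$ has no nontrivial clopen sets; worse for your purposes, the paper itself records (after Theorem \ref{bellpolyadic}, citing \cite{antoniohilbert}) that the connected uniform Eberlein compactum $B_{l_2(\kappa)}$ is a continuous image of $A(\kappa)^{\N}$. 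Pushing a disjoint clopen family forward along a surjection destroys both openness and disjointness; an invariant that survives continuous images must be arranged so that one pulls \emph{open} sets \emph{back}, since preimages of open sets are open. That is what Bell actually does: his tool is a Ramsey-type dichotomy for uncountable families of pairs $(x_i,U_i)$ with $x_i\in U_i$ open in a polyadic space (cf.\ \cite{bellramsey}), established for $A(\kappa)^{\N}$ by combinatorial ($\Delta$-system/free-set) means and transferred to images by taking preimages of the $U_i$. You concede that ``essentially all the actual difficulty lies'' in calibrating $\phi$; since that calibration \emph{is} the theorem, the proposal has a genuine gap at its center.

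The counterexample half is also not yet a construction. The vectors $v_{\alpha}=e_{\alpha}+|A_{\alpha}|^{-1/2}\chi_{A_{\alpha}}$ are not elements of $\ell_{2}$ when $A_{\alpha}$ is infinite ($\chi_{A_\alpha}$ is not square-summable and $|A_\alpha|^{-1/2}$ is not a real number); one needs square-summable weights supported on $A_\alpha$, after which the bounded weak closure is indeed a uniform Eberlein compactum of weight $\omega_1$. A Lusin almost disjoint family does exist in ZFC, and an anti-Ramsey structure of roughly this kind is the right sort of witness, but whether it refutes $\phi$ cannot be checked until $\phi$ is the correct, image-stable invariant, and you offer no argument beyond the expectation that the combinatorics will match. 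In short: right template, wrong invariant, and both substantive verifications (monotonicity of $\phi$ and its failure in $K$) are missing.
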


A. Avil\'es proved in \cite{antoniohilbert} that $B_{l_2(\kappa)}$ is a continuous image of $A(\kappa)^\N$. Moving from
weakly universal objects to universal objects we encounter undecidability of their existence.

\begin{theorem}\label{negativebell}\cite{bell} It is consistent
  that there is
no universal uniform Eberlein compact space of weight $\omega_1$
\end{theorem}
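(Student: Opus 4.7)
The plan is to establish consistency via forcing: start in a suitable ground model (for example one satisfying CH) and pass to a generic extension in which the existence of a universal object in $\UE_{\omega_1}$ is directly refuted. Since the paper already signals that Bell's method uses graphs, the natural route is to associate to each graph on $\omega_1$ a canonical UE compactum of weight $\omega_1$, translate continuous surjections between these compacta into combinatorial reductions between the corresponding graphs, and then force many graphs that cannot be reduced into any single one.

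First, I would associate to each graph $G = (\omega_1, E)$ a uniform Eberlein compactum $K_G$ of weight $\omega_1$. A natural candidate is
$$K_G \;=\; \overline{\{0\}\cup\{e_\alpha : \alpha<\omega_1\}\cup\{(e_\alpha+e_\beta)/\sqrt{2} : \{\alpha,\beta\}\in E\}}^{\,w}\;\subseteq\; B_{l_2(\omega_1)},$$
a weakly closed, bounded subset of $l_2(\omega_1)$, hence an element of $\UE_{\omega_1}$. One then verifies that the incidence structure of $G$ can be read off from the topology of $K_G$: the isolated points of $K_G\setminus\{0\}$ split naturally into ``vertex'' and ``edge'' types, and a point is an edge iff it is the midpoint of two vertex-type isolated points in an appropriate sense that is preserved under limits. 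The central technical step is to show that any continuous surjection $\pi:K\twoheadrightarrow K_G$ from a $K\in\UE_{\omega_1}$ induces a well-behaved reduction of $G$ into combinatorial data encodable from $K$ by parameters of size at most $\omega_1$.

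Second, I would force, by finite approximations, a sequence $\langle G_\xi:\xi<\omega_2\rangle$ of graphs on $\omega_1$ with the property that in the generic extension no single $\omega_1$-sized combinatorial object of the type arising in the first step can reduce all of them simultaneously. If some $K\in\UE_{\omega_1}$ were universal in the extension, each $K_{G_\xi}$ would be a continuous image of $K$, yielding $\omega_2$-many such reductions inside one object, contradicting the genericity of the $G_\xi$.

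The main obstacle is the first step: faithfully converting ``$\pi:K\twoheadrightarrow K_G$ is a continuous surjection of UE compacta'' into a combinatorial statement about $G$ that is robust enough to be defeated by genericity. The map $\pi$ need not respect any linear structure of $l_2(\omega_1)$, so one cannot recover $G$ geometrically; instead one must show that topological invariants of $K_G$ (derived sets, convergent sequences among the ``edge-type'' points, local countable bases at $0$) force the existence of a $G$-like subconfiguration inside any preimage structure on $K$. Once this rigidity is proved, the forcing/counting argument in the second step is standard, as the codings lie in $H(\omega_2)$ and generic graphs avoid all reductions coded in the ground model by density.
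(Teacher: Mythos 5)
Your proposal reproduces the architecture that the paper attributes to Bell: code each graph $G$ on $\omega_1$ by a uniform Eberlein compactum $K_G$, show that a continuous surjection onto $K_G$ yields a graph-theoretic reduction of $G$ to data extracted from the domain, and then invoke a consistency result saying no single object of size $\omega_1$ admits all such reductions. The paper itself gives no proof but states exactly this route: the association of graphs with uniform Eberlein compacta is Bell's, and the combinatorial input is Shelah's theorem that it is consistent that there is no universal graph of cardinality $\omega_1$. So the skeleton is right. However, there are two genuine gaps. First, the step you yourself flag as ``the main obstacle'' is not a technicality to be deferred: it is the entire mathematical content of Bell's contribution. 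A continuous surjection $\pi:K\twoheadrightarrow K_G$ carries no linear or combinatorial structure, and the known argument does not recover $G$ from soft topological invariants of $K_G$ (derived sets, local bases at $0$); it goes through the structure theory of uniform Eberlein compacta --- a representation of $K$ via an adequate family of finite sets, equivalently an embedding into a $\sigma_2$-type subspace of a cube with uniformly bounded supports --- from which one extracts a single graph $H_K$ on $\omega_1$ such that every $G$ with $K\twoheadrightarrow K_G$ embeds into $H_K$. Without some such representation theorem your ``rigidity'' claim has no proof, and the reduction from universal compacta to universal graphs does not close.

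Second, your forcing step is misstated. The hypothetical universal $K\in\UE_{\omega_1}$ lives in the generic extension, not in the ground model, so ``generic graphs avoid all reductions coded in the ground model by density'' does not apply to it; moreover there may be $2^{\omega_1}$ candidates for $K$, so naive counting against $\omega_2$ generic graphs fails. The correct argument is Shelah's isomorphism-of-names/counting argument (any single $K$ in the extension is decided by a name supported on $\omega_1$ coordinates, and a graph generic over the remaining coordinates cannot embed into $H_K$), i.e., precisely the proof that consistently there is no universal graph of cardinality $\omega_1$. The efficient course --- and the one the paper indicates --- is to quote that theorem of Shelah rather than to re-derive it; what then remains, and what you must actually supply, is the rigidity lemma converting surjective universality in $\UE_{\omega_1}$ into the existence of a universal graph.
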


The argument used above employs the consistency of the nonexistence of 
a universal graph of cardinality $\omega_1$ proved by S. Shelah (\cite{shelahgraphs1, shelahgraphs2}).
It turns out one can associate graphs with uniform Eberlein compacta (see\cite{bell}).  See Question 
\ref{questiongraph}.

Under some cardinal arithmetics hypothesis, using a general procedure applicable in other categories,
  M. Bell also proved a positive result for compact spaces 
which gives, by \ref{CKtransfer}, a positive result for associated class of Banach spaces:

\begin{theorem}\cite{bell} Assume $\kappa=2^{<\kappa}$.  Then, there 
 is  a universal uniform Eberlein compact space  $UE_\kappa$ in $\UE_{\kappa}$
and so $C(UE_\kappa)$ is isometrically universal Banach space
 for $\overline{\mathcal{H}}_\kappa$.
In particular, the above results hold for $\kappa=2^\omega=\omega_1$ if we
assume the continuum hypothesis.
\end{theorem}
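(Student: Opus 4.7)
The plan is to construct $UE_\kappa$ as a Fra\"iss\'e-type inverse limit in the category of metrizable uniform Eberlein compacta with continuous surjective bonding maps. The hypothesis $\kappa = 2^{<\kappa}$ (equivalently $\kappa^{<\kappa} = \kappa$) provides precisely the cardinal arithmetic needed to enumerate and amalgamate every ``small'' approximation in a single recursion of length $\kappa$. I would first note that every metrizable compact space is uniform Eberlein (via the weak embedding $(x_n) \mapsto (2^{-n} x_n)$ of $[0,1]^{\N}$ into $B_{l_2(\N)}$), so the skeleton of this category has cardinality at most $2^\omega \leq \kappa$, and any two objects admit at most $2^\omega \leq \kappa$ continuous surjections between them. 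The category has the required amalgamation: given surjections $f_i : N_i \to K$, the pullback $N_1 \times_K N_2$ is a closed subspace of the metrizable compactum $N_1 \times N_2$ that surjects onto both factors.

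By recursion on $\alpha < \kappa$ I would build an inverse system $(K_\alpha, \pi_{\alpha\beta})_{\alpha < \beta < \kappa}$ in this category, together with a $\kappa$-enumeration of ``requests''; such an enumeration exists because there are at most $\kappa^{<\kappa} = \kappa$ possible requests of the form ``extend a coherent partial surjection coming from some $K_\alpha$ onto a new small target $M$''. At each stage I amalgamate via the pullback so as to satisfy the current request; at limit stages I take the inverse limit. Setting $UE_\kappa = \varprojlim_{\alpha < \kappa} K_\alpha$, universality is then a standard Fra\"iss\'e back-and-forth: given any $L \in \UE_\kappa$, represent $L$ as a $\kappa$-indexed continuous inverse limit of metrizable compacta $L_\beta$ (using the usual decomposition through countable subalgebras of $C(L)$), and use the genericity built into the construction to produce by induction on $\beta$ a coherent family of surjections $UE_\kappa \to L_\beta$ whose limit is the desired surjection $UE_\kappa \to L$.

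The Banach space conclusion is then immediate from Proposition \ref{CKtransfer}: since $\UE_\kappa$ and $\overline{\mathcal{H}}_\kappa$ are $K$-associated by Theorem \ref{associationlist}(1), surjective universality of $UE_\kappa$ in $\UE_\kappa$ transfers to isometric injective universality of $C(UE_\kappa)$ in $\overline{\mathcal{H}}_\kappa$.

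The main obstacle I anticipate is verifying that $UE_\kappa$ itself lies in $\UE_\kappa$. The class of uniform Eberlein compacta is closed under continuous images (Benyamini--Rudin--Wage) and under closed subspaces, and under countable products via an obvious coordinate-weighting argument, but an arbitrary $\kappa$-indexed inverse limit is a priori not uniform Eberlein (an unrestricted $\kappa$-product of non-degenerate compacta is not even Corson). The fix must exploit the specific shape of the inverse system: by choosing embeddings $K_\alpha \subseteq B_{l_2(\Gamma_\alpha)}$ with $\Gamma_\alpha$ countable and pairwise disjoint, and arranging the construction so that each coherent thread is effectively determined by a countable cofinal fragment of its coordinates, one should be able to re-code $UE_\kappa$ as a weakly compact subset of $B_{l_2(\bigsqcup_\alpha \Gamma_\alpha)}$ consisting of points with countable, square-summable support. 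Executing this rescaling so that it is simultaneously a homeomorphism onto its image and norm-bounded is the technical heart of the argument, and is presumably where Bell's ``general procedure applicable in other categories'' meets the specific geometry of Hilbert space.
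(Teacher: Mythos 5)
The paper does not reprove Bell's construction; it quotes it and only supplies the passage from the universal compactum to the Banach space via Theorem \ref{associationlist}(1) and Proposition \ref{CKtransfer}. That transfer you handle correctly. The problem is with your reconstruction of the topological half, and it sits exactly where you yourself locate the ``technical heart'': verifying that $UE_\kappa=\varprojlim_{\alpha<\kappa}K_\alpha$ is uniform Eberlein. Nothing in the construction as you describe it keeps the limit inside the class: a long inverse limit of metrizable compacta with surjective bonding maps need not even be Corson (the obvious system of subproducts exhibits $[0,1]^{\omega_1}$ this way), and amalgamating by pullbacks of \emph{arbitrary} continuous surjections imposes no restriction on the bonding maps that would prevent this. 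The proposed repair --- disjoint countable coordinate sets $\Gamma_\alpha$ plus the requirement that ``each thread is determined by a countable cofinal fragment of its coordinates'' --- is circular: for $\kappa$ of uncountable cofinality no countable set of stages is cofinal, and ``every point depends on countably many coordinates'' is essentially the Corson/uniform Eberlein property you are trying to establish, not something you can decree while retaining the genericity needed for universality. This is precisely why Bell does not build an inverse limit of topological spaces. He first reduces to zero-dimensional members of $\UE_\kappa$ (by Benyamini--Rudin--Wage every space in $\UE_\kappa$ is a continuous image of one, so a surjectively universal zero-dimensional object suffices), encodes these by a combinatorial presentation of their clopen algebras (a generating family split into countably many pieces with uniform bounds on linked subfamilies), and then runs the ``general procedure applicable in other categories'' mentioned in the paper: a standard model-theoretic amalgamation of structures of size $<\kappa$ under $\kappa=2^{<\kappa}$, where membership in the class is automatically preserved by unions of chains. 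The entire difficulty you defer to a ``rescaling into $B_{l_2}$'' is dissolved, not solved, by choosing the right category of presentations; as written, your argument does not close.

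Two secondary points. The parenthetical ``(equivalently $\kappa^{<\kappa}=\kappa$)'' is false: $2^{<\kappa}=\kappa$ holds at singular strong limits such as $\beth_\omega$, where $\kappa^{<\kappa}>\kappa$; since your bookkeeping recursion of length $\kappa$ implicitly needs $\kappa$ regular, the singular case would require a separate treatment (compare the Argyros--Benyamini result for strong limits of countable cofinality quoted in the next subsection of the paper). And in the final back-and-forth you should say explicitly that the surjection onto $L$ is obtained by producing surjections $K_{\alpha_\beta}\to L_\beta$ commuting with both systems of bonding maps; this requires that amalgamation requests involving objects that only appear at stage $\alpha_\beta$ are caught later by the bookkeeping, which is routine but is the place where the cardinal arithmetic is actually consumed.
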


The negative result \ref{negativebell} of Bell cannot be directly applied via \ref{weaklyutrick} for
the class of Banach spaces $\overline{\mathcal{H}}_\kappa$
because  weakly universal compact spaces exist here (\ref{wuuniforme}).
C.~Brech and the author obtained the Banach space version of this negative result 
employing a forcing genericity argument.
Actually the result is much stronger and has other universal consequences which we will mention later:

\begin{theorem}\label{ugresults}\cite{ug} It is consistent that there is no Banach space of density $2^\omega$ or $\omega_1$  
which contains isomorphic copies of all Banach spaces from  $\UE_{2^\omega}$
or from $\UE_{\omega_1}$ respectively. In particular, it is consistent that there is no universal Banach space 
neither in $\overline{\mathcal{H}}_{\omega_1}$ nor in $\overline{\mathcal{H}}_{2^\omega}$
\end{theorem}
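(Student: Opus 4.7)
The plan is to build, by forcing, a model of ZFC in which no Banach space of density $\omega_1$ (respectively $2^\omega$) contains isomorphic copies of every element of $\UE_{\omega_1}$ (respectively $\UE_{2^\omega}$). A purely combinatorial route via \ref{weaklyutrick} is blocked, because \ref{wuuniforme} shows that weakly universal uniform Eberlein compacta exist in ZFC; hence the failure of Banach-space universality cannot be deduced from the failure of topological weak universality, and must be obtained by a direct genericity argument on the Banach-space level. Bell's combinatorial consistency result \ref{negativebell} is also insufficient, since it kills \emph{compact} universality, not the isomorphic type of a $C(K)$ space sitting inside a bigger Banach space.

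Starting from a model of CH (for the $\omega_1$-case; for the $2^\omega$-case one adjusts so that $2^\omega=\omega_2$ in the extension), I would design a ccc forcing $\PP$ whose generic simultaneously adjoins a family $\{X_\alpha : \alpha<\omega_2\}$ of pairwise ``independent'' Hilbert generated Banach spaces, each of density $\omega_1$. Conditions should be finite approximations to a partial inner-product-type structure on a set of generators indexed by $\omega_1\times\omega_2$, or equivalently finitary approximations to uniform Eberlein compacta of weight $\omega_1$ via the $K$-association $\UE_{\omega_1}\leftrightarrow\overline{\mathcal{H}}_{\omega_1}$ of \ref{associationlist}. The conditions have to be chosen carefully so that the generic objects in fact lie in $\overline{\mathcal{H}}_{\omega_1}$ rather than merely in $\mathcal B_{\omega_1}$, and so that $\PP$ is at least $\sigma$-linked, hence ccc and therefore $\omega_1$- and $\omega_2$-preserving.

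Next I would prove the key genericity lemma: if $Y\in\overline{\mathcal{H}}_{\omega_1}$ lives in $V^\PP$ and $\dot T$ is a $\PP$-name for an isomorphic embedding $X_\alpha\hookrightarrow Y$, then $Y$ and $\dot T$ are already coded in a sub-extension $V[G\cap \PP']$ for some complete subposet $\PP'\subseteq \PP$ of size $\omega_1$. This is a nice-name / elementary-submodel reflection that uses the ccc together with the fact that $Y$ has density $\omega_1$ and that a bounded operator is determined by its values on a dense set. For any $\alpha$ outside the support of $\PP'$, a fusion-style argument should show that no condition can force the existence of such an embedding: given a candidate name $\dot T$, one constructs an extension that introduces fresh generic generators of $X_\alpha$ whose putative images in $Y$ are incompatible with anything $\PP'$ has already decided, exploiting the infinite-dimensional freedom left by the generic orthogonal construction.

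A pigeonhole step then finishes the argument: any purported universal $Y$ determines a sub-poset $\PP'$ of size $\omega_1$ into which every embedding $X_\alpha\hookrightarrow Y$ must reflect, so at most $\omega_1$ indices $\alpha$ can be realised; since $\omega_2$ many $X_\alpha$ were added, some $X_\alpha$ is missed. I expect the main obstacle to be the genericity lemma, specifically calibrating the forcing so that it is simultaneously tame enough (ccc, cardinal-preserving, reflective for nice names of bounded operators) and rich enough that the generic Banach spaces stay Hilbert generated — so that they genuinely lie in $\overline{\mathcal{H}}_{\omega_1}$ — while being sufficiently independent of one another and of ground-model objects. The $2^\omega$-density statement requires one further twist: iterate or widen $\PP$ so that $2^\omega$ becomes $\omega_2$ in the extension while the above reflection and fusion lemmas are preserved, and then rerun the pigeonhole with $\omega_1$ replaced by $2^\omega$ in the reflection bound.
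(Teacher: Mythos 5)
The paper does not actually prove this theorem: it is quoted from \cite{ug}, with only the remark that the argument there is ``a forcing genericity argument.'' Your proposal correctly identifies why the soft route is closed (Proposition \ref{wuuniforme} gives weakly universal uniform Eberlein compacta outright in ZFC, so Proposition \ref{weaklyutrick} cannot yield the negative Banach-space result, and Bell's Theorem \ref{negativebell} only kills universal \emph{compacta}), and the overall architecture you describe --- adjoin $\omega_2$ mutually generic Hilbert generated spaces over a model of CH, reflect a candidate universal space and a name for an embedding to a small subforcing, and conclude by counting --- is indeed the shape of the Brech--Koszmider argument.

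There is, however, a genuine gap at exactly the point you yourself flag as ``the main obstacle'': the non-embedding lemma is asserted, not proved, and it is the entire mathematical content of the theorem. Saying that ``a fusion-style argument should show that no condition can force the existence of such an embedding'' by introducing ``fresh generic generators of $X_\alpha$ whose putative images are incompatible with anything $\PP'$ has already decided'' does not engage with the actual difficulty: an isomorphic embedding $T$ only satisfies $c\|x\|\le\|Tx\|\le C\|x\|$, so the images of generic generators are in no way constrained to respect the generic orthogonal structure, and ``incompatibility with what $\PP'$ has decided'' is not a contradiction, since $\dot T$ is a new object that may be coded anywhere in the full generic. What has to be done --- and what \cite{ug} actually does --- is to fix a condition forcing the two-sided norm estimate, decide $\dot T$ on an uncountable, combinatorially prepared family of norm-one elements, and run a $\Delta$-system/counting argument against the specific structure of the generic uniform Eberlein compactum to produce a short vector with a long image (or conversely), contradicting the bound. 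None of that is present. Two further concrete defects: your pigeonhole step is circular as stated, because reflecting $Y$ to a subposet $\PP'$ of size $\omega_1$ bounds where $Y$ lives, not where the embeddings live, so the conclusion that ``at most $\omega_1$ indices $\alpha$ can be realised'' already presupposes the non-embedding lemma for every $\alpha$ outside the support of $\PP'$; and in the density-$2^\omega$ case the reflection to a size-$\omega_1$ subposet is simply unavailable (a space of density $2^\omega=\omega_2$ need not be captured by any small subforcing), so that case requires a genuinely different counting over $\omega_2$ many names rather than a rerun ``with $\omega_1$ replaced by $2^\omega$.''
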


The question of the possible nonexistence of universal spaces in $\overline{\mathcal{H}}_\kappa$ 
for $\kappa$ different than $\omega_1$ or $2^\omega$ were not addressed in the literature yet.
There are other negative results of universal Eberlein compact spaces, e.g. in \cite{mirna}, which were
not dualized to Banach spaces yet.

\vskip 15pt

\subsection{Eberlein compacta and weakly compactly generated Banach spaces.}
It is well known that all Eberlein compacta are subspaces of dual balls of reflexive Banach spaces with
the weak$^*$ topology. However this class of compact spaces is much reacher than the dual balls of Hilbert spaces and
 it is not enough to obtain weakly universal objects as in the case of uniform Eberlein compacta.

\begin{theorem}\cite{argyrosben}
If $\kappa^\omega=\kappa$ or $\kappa=\omega_1$ then there is no weakly
universal Eberlein compact of weight $\kappa$ nor a universal WCG Banach space
of density $\kappa$. If $\kappa$ is a strong limit cardinal of countable cofinality, then there is
a universal Eberlein compact of weight $\kappa$, and so,  there is a universal WCG Banach space
of density $\kappa$.
\end{theorem}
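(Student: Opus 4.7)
The first move is to reduce the Banach-space assertions to compact-space assertions via the association $\E_\kappa \leftrightarrow \mathcal{WCG}_\kappa$ of Theorem \ref{associationlist}(2). Proposition \ref{weaklyutrick} tells us that a WCG space $X$ is weakly universal in $\mathcal{WCG}_\kappa$ if and only if $B_{X^*}$ is a weakly universal Eberlein compactum of weight $\leq\kappa$; since a universal Banach space is a fortiori weakly universal, ruling out the compact version automatically rules out the Banach version. For the positive direction, once a universal Eberlein compactum $K$ of weight $\leq\kappa$ is produced, Proposition \ref{CKtransfer} delivers $C(K)$ as an isometrically universal element of $\mathcal{WCG}_\kappa$.

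For the positive part assume $\kappa$ is a strong limit of countable cofinality and write $\kappa = \sup_n \kappa_n$ with each $\kappa_n$ strong limit, so that in particular $\kappa_n = 2^{<\kappa_n}$. Bell's theorem then yields a universal uniform Eberlein compactum $U_n \in \UE_{\kappa_n}$. I would form $U := \alpha\bigl(\bigsqcup_n U_n\bigr)$, the one-point compactification of the disjoint union, and check that $U$ is Eberlein of weight $\kappa$ using standard permanence properties of the class $\E$. To see universality, embed a given $K\in \E_\kappa$ into $c_0(\kappa)$ with the pointwise topology and decompose $K = \bigcup_n K^{(n)}$ where $K^{(n)} := \{x \in K : \|x\|_\infty \leq n\}$ is uniform Eberlein. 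The strong-limit hypothesis forces each $K^{(n)}$ to have weight bounded by some $\kappa_{m(n)}$, so $K^{(n)}$ is a continuous image of $U_{m(n)}$ by universality of the latter. The concluding step is to assemble these partial surjections into a single continuous surjection $U \to K$, for which the one-point-compactification structure (rather than a product) is exploited to let the ``tails'' $K^{(n)} \setminus K^{(n-1)}$ converge to a single distinguished accumulation point of $K$.

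For the negative direction the plan is to exhibit a family $\{K_\sigma : \sigma < 2^\kappa\}$ of Eberlein compacta of weight $\leq\kappa$ such that no single host $H$ of weight $\leq\kappa$ admits every $K_\sigma$ as a continuous image of a closed subspace. A natural source is Talagrand-type adequate families on $\kappa$: to a suitably chosen almost disjoint family $\mathcal{A} \subseteq [\kappa]^{\omega}$ one associates $K_{\mathcal{A}} \subseteq \{0,1\}^\kappa$ consisting of characteristic functions of finite subsets of members of $\mathcal{A}$, which is automatically Eberlein of weight $\leq \kappa$. A rigidity argument should show that the homeomorphism type of $K_{\mathcal{A}}$ recovers enough of $\mathcal{A}$ to distinguish $2^\kappa$ pairwise inequivalent instances, while a candidate host $H$ of weight $\leq\kappa$ can admit only $\kappa^\omega$ many continuous images of closed subspaces of the relevant kind; under $\kappa^\omega = \kappa$ this gives $\kappa < 2^\kappa$ many slots for $2^\kappa$ many compacta and pigeonhole closes the argument.

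The main obstacle is the case $\kappa = \omega_1$, where $\omega_1^\omega$ may well exceed $\omega_1$ and the crude counting above breaks down. Here one must replace cardinality by a finer invariant: I would aim to extract from a candidate weakly universal $H$ of weight $\omega_1$ a canonical combinatorial object of size $\omega_1$ (for instance a coloring, almost-disjoint family, or tree) coding enough of the Eberlein structure, and then to contradict a ZFC non-universality phenomenon for that combinatorial object at $\omega_1$, in the spirit of the graph-based transfer Bell uses for $\UE_{\omega_1}$ in Theorem \ref{negativebell}. The hard work is isolating the right invariant: Eberlein compacta are flexible enough that the coarse invariants (weight, density, scattered height on the scattered part) do not separate $2^{\omega_1}$ many candidate $K_\sigma$, yet the invariant chosen must be robust under the continuous-image-of-closed-subspace relation; this tension is, I expect, the decisive technical difficulty.
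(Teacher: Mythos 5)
Your reduction of the Banach-space statements to the compact-space ones is the intended one: \ref{weaklyutrick} for the negative half (a universal WCG space is in particular weakly universal, so its dual ball would be a weakly universal Eberlein compact) and \ref{CKtransfer} for the positive half, via the association of $\E_\kappa$ with $\mathcal{WCG}_\kappa$ from \ref{associationlist}. Note that the survey does not reprove this theorem --- it is quoted from \cite{argyrosben} --- but it does describe the actual mechanism in Subsection 5.4, and both of your substantive constructions diverge from it in ways that do not work.

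For the negative half, counting cannot succeed, under any cardinal arithmetic. A compact space $H$ of weight $\kappa$ has at most $2^\kappa$ closed subspaces (each is determined by the set of basic open sets it misses), and each closed subspace $F$ has at most $2^\kappa$ Hausdorff quotients (these correspond to closed subalgebras of $C(F)$, each determined by a dense subset of size $\le\kappa$ of a space of cardinality $\le 2^\kappa$); so $H$ admits at most $2^\kappa$ continuous images of closed subspaces --- not $\kappa^\omega$ of them. Since there are also at most $2^\kappa$ Eberlein compacta of weight $\le\kappa$ up to homeomorphism (they all embed in $[0,1]^\kappa$), the pigeonhole closes nothing. What Argyros and Benyamini actually do is what Subsection 5.4 records: they attach to each Eberlein compact $K$, via its realization as a weakly compact subset of $c_0(\kappa)$, an ordinal index $i(K)<\kappa^+$ that is monotone under passing to closed subspaces and to continuous images, and they produce, for each $\alpha<\kappa^+$, an Eberlein compact of weight $\le\kappa$ with index $>\alpha$; the hypotheses $\kappa^\omega=\kappa$ or $\kappa=\omega_1$ enter in keeping the weight of these high-index examples down to $\kappa$. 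A candidate weakly universal $H$ then has some fixed index and misses the examples of larger index. Your instinct that $\omega_1$ needs a finer, monotone invariant is correct, but that invariant (the index) is the engine of the entire negative half, not a patch for one case.

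For the positive half the proposed space fails outright. First, $K^{(n)}=\{x\in K:\|x\|_\infty\le n\}$ equals $K$ for large $n$, since a weakly compact subset of $c_0(\Gamma)$ is norm bounded; and nothing in the strong-limit hypothesis on $\kappa$ forces closed pieces of a weight-$\kappa$ Eberlein compact to have weight $<\kappa$. Second, and decisively, any continuous image of $U=\alpha\bigl(\bigsqcup_n U_n\bigr)$ has the form $\bigcup_n f[U_n]\cup\{p\}$ where $f[U_n]$ has weight $\le\kappa_n<\kappa$ and every neighbourhood of $p$ contains all but finitely many of the sets $f[U_n]$; hence the complement of every neighbourhood of $p$ is covered by finitely many $f[U_n]$ and so has weight $<\kappa$. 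The ball $B_{\ell_2(\kappa)}$ with the weak topology is a uniform Eberlein compact of weight $\kappa$ for which this fails at every point $p$: removing a slab $\{x:|\langle x-p,y\rangle|<\varepsilon\}$ for a suitable $y$ leaves a set containing an affine copy of a weight-$\kappa$ Hilbert ball. So $U$ does not even map onto $B_{\ell_2(\kappa)}$ and is far from universal in $\E_\kappa$; a genuinely different construction is needed for the strong-limit case.
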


Here  S. Argyros  and Y. Benyamini could conclude the negative result for the associated class 
of Banach spaces  via \ref{weaklyutrick}.
Thus under G.C.H. the situation is completely settled. However many consistent cardinal arithmetics are
not covered by the above result. It seems that the following
questions were not settled in the literature:

\begin{question} Is it consistent that there is no universal space in $\E_{\omega_\omega}$, in $\mathcal{WCG}_{\omega_\omega}$? 
\end{question}

\begin{question}Is it consistent that there is  a universal space in $\E_{\omega_2}$, in $\mathcal{WCG}_{\omega_2}$? 
\end{question}

There are also results on universal Eberlein compacta from
 more restrictive classes, e.g. in \cite{bellwitek}, which did not find
their Banach space theory versions as yet.

\vskip 15pt
\subsection{Corson compacta and weakly Lindel\"of determined and weakly Lindel\"of Banach spaces.} S. Todorcevic proved the following:

\begin{theorem}\label{nonunivwld}\cite{stevofunktor}
For every countably tight compact space $K$ of weight $\leq 2^\omega$
there is a Corson compact $K'$ of weight $\leq 2^\omega$ where all Radon  measures have
separable supports and which is not a continuous image of any closed set of $K$.
In particular there is no universal weakly Lindel\"of determined Banach spaces   in $\mathcal{WLD}_{2^\omega}$.
\end{theorem}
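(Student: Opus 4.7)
The plan is to first derive the ``in particular'' clause from the main claim, and then sketch the construction of the Corson compactum $K'$.

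For the clause, suppose towards contradiction that $X \in \mathcal{WLD}_{2^\omega}$ were (even weakly) universal. Proposition~\ref{weaklyutrick}, applied to the classes $\mathcal{WLD}_{2^\omega}$ and $\CM_{2^\omega}$ which are associated by Theorem~\ref{associationlist2}(1)--(2), yields that $B_{X^*}$ is weakly universal for $\CM_{2^\omega}$. Moreover $B_{X^*} \in \C_{2^\omega}$ by Theorem~\ref{associationlist2}(2), hence countably tight of weight at most $2^\omega$. Applying the main statement to $K := B_{X^*}$ produces $K' \in \CM_{2^\omega}$ which is not a continuous image of any closed subset of $B_{X^*}$, contradicting the weak universality of $B_{X^*}$ for $\CM_{2^\omega}$.

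For the main statement, fix a countably tight compact $K$ of weight $\leq 2^\omega$. Following the functorial construction in \cite{stevofunktor}, we would realize $K'$ inside a $\Sigma$-product of $2^\omega$ many copies of $[0,1]$, with coordinates parametrized by $2^\omega$ and $K'$ carved out from a set of $\{0,1\}$-valued sequences whose combinatorics are controlled by a walks-on-ordinals oscillation function $o \colon [2^\omega]^2 \to \omega$. Countable supports are built into the $\Sigma$-product, so $K' \in \C_{2^\omega}$ is automatic. Property~M for $K'$ would be secured by arranging the patterns of $o$ so as to exclude uncountable independent clopen families, forcing every Radon measure on $K'$ to concentrate on countably many coordinates.

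The main obstacle is to show that no continuous $f \colon F \to K'$ from a closed $F \subseteq K$ can be surjective. Here the countable tightness of $K$ is the crucial tool: each coordinate map $\pi_\alpha \circ f$ is, by a reflection argument, determined by a countable subset of $F$, so the whole family $\{\pi_\alpha \circ f : \alpha < 2^\omega\}$ admits a ``countable coding'' coordinate by coordinate. Since there are only $2^\omega$ candidate pairs $(F,f)$ in total, one can fix $o$ by transfinite recursion of length $2^\omega$ to diagonalize against each such candidate: at stage $\alpha$, prescribe $o$-values on an uncountable subset of $2^\omega$ so that the combinatorics of the image $f_\alpha[F]$ are forced out of $K'$. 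The delicate point, and what we expect to be the main technical difficulty, is to keep Property~M intact while carrying out this diagonalization, i.e.\ to ensure that the oscillation values added for the sake of rigidity do not spoil the measure-theoretic tameness of $K'$. This balancing act between rigidity against continuous images on the one hand and tameness of Radon measures on the other is precisely what Todorcevic's walks-on-ordinals machinery is designed to deliver.
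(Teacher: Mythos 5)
Your derivation of the non-universality clause follows the paper's route in outline, but it rests on a false premise: you invoke Proposition~\ref{weaklyutrick} for the pair $\mathcal{WLD}_{2^\omega}$, $\CM_{2^\omega}$, claiming these classes are associated by Theorem~\ref{associationlist2}(1)--(2). They are not, at least not provably in ZFC: item (2) only places $B_{X^*}$ in $\C_{2^\omega}$, and under CH there is $X_0\in\mathcal{WLD}_{2^\omega}$ with $B_{X_0^*}$ a Corson compact without property M; indeed Theorem~\ref{associationlist2}(4)(c) says that under CH $\mathcal{WLD}_{2^\omega}$ is associated with \emph{no} class of compact spaces. The paper is therefore careful to argue only ``as in'' \ref{weaklyutrick} rather than by it: given a universal $X$, for each $K\in\CM_{2^\omega}$ one has $C(K)\in\mathcal{WLD}_{2^\omega}$ by (1), so $C(K)$ embeds isomorphically into $X$, whence $K\subseteq B_{C(K)^*}$ is a continuous image of a closed subset of $B_{X^*}$ by \ref{dualities} and \ref{semistone}; and $B_{X^*}\in\C_{2^\omega}$ by (2), hence countably tight. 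The mismatch between $\C$ and $\CM$ is harmless precisely because the main statement applies to an arbitrary countably tight $K$. So this half of your argument is repairable, but as written the appeal to \ref{weaklyutrick} is illegitimate.

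The genuine gap is the main topological claim, which the paper does not reprove (it is Todorcevic's theorem from \cite{stevofunktor}) and which your sketch does not establish. Two concrete problems. First, your diagonalization is indexed by ``the $2^\omega$ candidate pairs $(F,f)$,'' but a countably tight compact space of weight $\leq 2^\omega$ can have $2^{2^\omega}$ closed subspaces and $2^{2^\omega}$ continuous maps into a fixed target --- already $A(2^\omega)$ has $2^{2^\omega}$ closed subsets and $2^{2^\omega}$ continuous self-maps (extend any injection of the discrete part) --- so a recursion of length $2^\omega$ cannot enumerate the candidates; moreover $K'$ is only being built during the recursion, so ``continuous maps into $K'$'' are not yet available objects at intermediate stages. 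Second, the two essential points --- that the oscillation-defined compactum has property M, and that countable tightness of $K$ obstructs continuous surjections from closed subsets of $K$ onto it --- are exactly the ones you defer as ``the main technical difficulty.'' Todorcevic's argument does not enumerate candidate maps at all; the non-surjectivity is extracted from Ramsey-theoretic properties of the oscillation mapping played against countable tightness. As it stands, the proposal establishes only (a repairable version of) the corollary and leaves the theorem itself unproved.
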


Knowing that every Corson compact space has
countable tightness, the result implies that there is no Corson compact of weight
continuum whose closed subspaces  map onto all Corson compacta with property M of weight continuum.
Using (1) and (2) of \ref{associationlist2} as in \ref{weaklyutrick}, if there were
a universal Banach space $X$  in $\mathcal{WLD}_{2^\omega}$, then $B_{X^*}$
were a Corson compact whose subspaces  mapped onto all Corson compacta with  property M:
Given $K\in \CM_\kappa$, $C(K)\in \mathcal{WLD}_{2^\omega}$, so $C(K)$ isomorphically
embeds in $X$ and so $B_{X^*}$ maps onto $B_{C(K)^*}$ by \ref{dualities} which includes $K$ by \ref{semistone}.

 In Section 4 of  \cite{stevofunktor} we have the following

\begin{theorem} There is no universal weakly Lindel\"of Banach space of density $2^\omega$.
\end{theorem}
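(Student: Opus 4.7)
The plan is to assume some $X\in\mathcal{L}_{2^\omega}$ is isomorphically injectively universal for $\mathcal{L}_{2^\omega}$ and to contradict Theorem \ref{nonunivwld} by refining the argument sketched in the paper just before for $\mathcal{WLD}_{2^\omega}$. The first step, which is the heart of the matter, is to verify that $(B_{X^*},w^*)$ is countably tight; the weight bound $\leq 2^\omega$ is immediate from the density assumption on $X$. For countable tightness I would first observe that any weakly Lindel\"of Banach space enjoys Corson's property (C): given closed convex $\{C_i\}_{i\in I}$ in $X$ with empty intersection, the complements form an open cover of $(X,w)$, from which a countable subcover (hence a countable empty-intersection subfamily) is extracted by weak Lindel\"ofness. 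A classical theorem of R.~Pol then asserts that property (C) is equivalent to countable tightness of every weak$^*$-closed subset of $X^*$, so in particular of $B_{X^*}$.

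Next I would apply Theorem \ref{nonunivwld} to the countably tight compactum $B_{X^*}$ of weight $\leq 2^\omega$ to produce $K'\in\CM_{2^\omega}$ which is not a continuous image of any closed subspace of $B_{X^*}$. By Theorem \ref{associationlist2}(1), $C(K')\in\mathcal{WLD}_{2^\omega}\subseteq\mathcal{L}_{2^\omega}$, so the assumed universality of $X$ yields an isomorphic embedding $C(K')\hookrightarrow X$. Now I would mimic Proposition \ref{weaklyutrick} in the isomorphic rather than isometric setting: the unnumbered proposition immediately following \ref{semistone} upgrades \ref{dualities}(3) to isomorphic embeddings, producing a weak$^*$-compact $L\subseteq B_{X^*}$ which continuously surjects onto $B_{C(K')^*}$. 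Since $K'$ embeds into $B_{C(K')^*}$ by \ref{semistone}, $K'$ is a closed subspace of a continuous image of $L\subseteq B_{X^*}$; by the ``one-direction'' argument in the proof of \ref{weaklyequiv}, $K'$ is then also a continuous image of a closed subspace of $B_{X^*}$, contradicting the choice of $K'$.

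The main obstacle is really the first step, i.e.\ the passage from weak Lindel\"ofness of $X$ to countable tightness of $B_{X^*}$ via Pol's theorem. This is precisely what separates the present statement from the easier one for $\mathcal{WLD}_{2^\omega}$, in which $B_{X^*}$ is directly a Corson compactum and countable tightness is automatic. Once this tightness input is in hand, the remainder is essentially categorical bookkeeping along the associations developed in Section~2, paralleling the proof of \ref{weaklyutrick} almost verbatim.
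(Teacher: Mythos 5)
Your argument has a genuine gap at exactly the step you identify as ``the heart of the matter,'' and it is caused by a misquotation of Pol's theorem. What Pol actually proved is that $X$ has Corson's property (C) if and only if $(B_{X^*},w^*)$ has countable tightness \emph{with respect to convex sets} (every point in the weak$^*$ closure of a convex subset lies in the weak$^*$ closure of a countable subset). This is strictly weaker, at least formally, than countable tightness of $(B_{X^*},w^*)$; the implication from property (C) to genuine countable tightness of the dual ball is a well-known open problem (it appears, for instance, in the Avil\'es--Kalenda problem list cited here as \cite{antonioracsam}). Your derivation of property (C) from weak Lindel\"ofness is correct, but it delivers only convex tightness, whereas Theorem \ref{nonunivwld} requires a genuinely countably tight compactum to which to apply Todor\v cevi\'c's construction. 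So the input you need for the rest of the argument is not available, and as written the proof would resolve an open problem as a lemma.

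The paper in fact anticipates this obstruction: it gives no proof of this theorem, citing Section 4 of \cite{stevofunktor} instead, and explicitly remarks that because (1) and (2) of \ref{associationlist2} fail for the class $\mathcal L$ (indeed, by \ref{associationlist}(3) and Pol's ladder system example, $\mathcal L_{\omega_1}$ is not even associated with $\C_{\omega_1}$, so dual balls of weakly Lindel\"of spaces need not be Corson), Todor\v cevi\'c's argument for the weakly Lindel\"of case ``is necessarily different'' from the duality argument that works for $\mathcal{WLD}_{2^\omega}$. Your proposal is precisely that duality argument. The second half of your write-up --- producing $K'$, embedding $C(K')\in\mathcal{WLD}_{2^\omega}\subseteq\mathcal L_{2^\omega}$ into $X$, passing to a weak$^*$-compact $L\subseteq nB_{X^*}$ mapping onto $B_{C(K')^*}$ via the unnumbered proposition after \ref{semistone}, and converting ``closed subspace of a continuous image'' into ``continuous image of a closed subspace'' as in \ref{weaklyequiv} --- is sound bookkeeping and would indeed finish the proof if countable tightness of $B_{X^*}$ were established. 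But that first step is not a known theorem, so the proof does not close.
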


As we do not have (1), (2) of \ref{associationlist2} 
for the class of weakly Lindel\"of Banach spaces  an approach  presented in Section 4 of \cite{stevofunktor}
is necessarily different than that leading to the \ref{nonunivwld}.

\begin{question} 
\begin{enumerate}
\item Are there (consistently) universal Banach spaces in ${WLD}_{\kappa}$
for $\kappa\not=2^\omega$, for example consistently
for $\kappa=\omega_1$? 
\item  Are there (consistently) universal Corson compact spaces
(where all Radon measures have separable supports) spaces of weight $\leq \kappa$
for $\kappa\not=2^\omega$, for example consistently
for $\kappa=\omega_1$?
\item Are there (consistently) universal Banach spaces in ${L}_{\kappa}$
for $\kappa\not=2^\omega$, for example consistently
for $\kappa=\omega_1$? 
\end{enumerate}
\end{question}
\vskip 15pt

\subsection{(Quasi) Radon-Nikod\'ym compacta and (subspaces of) Asplund generated Banach spaces}
Perhaps one should start this subsection with looking at the class of scattered compact spaces whose
spirit is present in the classes considered here. One has the Cantor-Bendixon height $ht(K)$ for all compact
scattered spaces $K$ so that $ht(K)$ is an ordinal such that $|ht(K)|$ is not bigger than the weight of $K$, 
if $L$ is a closed subset of $K$ or if $L$ is a continuous image of $K$, then $ht(L)\leq ht(K)$, and moreover
for every cardinal $\kappa$ and every $\alpha<\kappa^+$ there is a compact scattered space 
 $K$ of weight $\kappa$ such that
$ht(K)>\alpha$. This directly implies that there is no weakly universal scattered compact space of any fixed weight.

This idea was first transfered in the realm of Banach spaes by Szlenk in \cite{szlenk} who
defined a similar index for the dual balls of separable reflexive Banach spaces obtaining the
nonexistence a universal reflexive Banach space. This could be generalized to  to all Asplund spaces:

\begin{theorem}[\cite{asplunduniversal}, \cite{wojtaszczyk}] There is no universal Banach space for the class of Asplund spaces of density character $\kappa$
for any infinite cardinal $\kappa$
\end{theorem}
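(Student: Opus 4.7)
The plan is to adapt the Szlenk-index argument---in the spirit of the Cantor-Bendixson height on scattered compacta---from the separable reflexive case treated by Szlenk to Asplund spaces of arbitrary density character. For a Banach space $X$, a real $\varepsilon > 0$, and a weak$^*$-compact $K \subseteq X^*$, let $s_\varepsilon(K)$ be obtained from $K$ by deleting every point that has a relatively weak$^*$-open neighborhood in $K$ of norm-diameter at most $\varepsilon$. Iterate transfinitely, intersecting at limits, and let $\mathrm{Sz}(X,\varepsilon)$ be the least ordinal $\alpha$ with $s_\varepsilon^\alpha(B_{X^*}) = \emptyset$; put $\mathrm{Sz}(X) = \sup_{\varepsilon > 0}\mathrm{Sz}(X,\varepsilon)$. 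A standard characterization of Asplundness says that $\mathrm{Sz}(X)$ is an honest ordinal precisely when $X$ is Asplund.

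The argument then rests on three properties. First, \emph{monotonicity under isomorphic embeddings}: if $T \colon Y \to X$ is an isomorphic embedding, then the adjoint $T^*$, rescaled as in the proposition about $T^*$ appearing in Section~2, carries the Szlenk iterates on $B_{X^*}$ onto those on a multiple of $B_{Y^*}$ with only a controlled distortion of $\varepsilon$, so $\mathrm{Sz}(Y) \leq \mathrm{Sz}(X)$. Second, \emph{boundedness}: the compact space $(B_{X^*}, w^*)$ has weight equal to the density of $X$, namely $\kappa$, and in a compact space of weight $\kappa$ every strictly decreasing transfinite chain of closed subsets stabilizes before $\kappa^+$; hence $\mathrm{Sz}(X) < \kappa^+$ for every Asplund $X$ of density character $\kappa$. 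Third, \emph{unbounded examples}: for every $\alpha < \kappa^+$ there is an Asplund space $X_\alpha$ of density $\leq \kappa$ whose Szlenk index exceeds $\alpha$. I would take $X_\alpha = C(K_\alpha)$ where $K_\alpha$ is a scattered compactum of weight $\leq \kappa$ with Cantor-Bendixson height above $\alpha$ (for instance a suitable ordinal interval with the order topology); $C(K_\alpha)$ is Asplund since $K_\alpha$ is scattered, and by tracking the Dirac evaluations $\delta_x$ through the $\varepsilon$-derivation for sufficiently small $\varepsilon$ one verifies $\mathrm{Sz}(C(K_\alpha)) \geq \mathrm{ht}(K_\alpha) > \alpha$.

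Assembling the three pieces, suppose for contradiction that $X$ is a universal Banach space in the class of Asplund spaces of density character $\kappa$. By boundedness, $\beta := \mathrm{Sz}(X) < \kappa^+$. Choose $Y = X_\beta$ from the unbounded-examples step; then $Y$ is an Asplund space of density $\leq \kappa$ with $\mathrm{Sz}(Y) > \beta$. By universality $Y$ embeds isomorphically in $X$ (after padding $Y$ if necessary so that its density is exactly $\kappa$, which does not decrease $\mathrm{Sz}$), so monotonicity forces $\mathrm{Sz}(Y) \leq \mathrm{Sz}(X) = \beta$, a contradiction.

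The step I expect to require the most care is the unbounded-examples calculation, because $B_{C(K)^*}$ is the full unit ball of Radon measures on $K$, vastly larger than the homeomorphic copy $\{\delta_x : x \in K\}$ of $K$ that it contains. The heart of the matter is verifying by transfinite induction that the $\alpha$-th Szlenk derivative of $B_{C(K)^*}$ still contains the Dirac evaluations over the $\alpha$-th Cantor-Bendixson derivative of $K_\alpha$; this requires choosing $\varepsilon$ small enough (uniformly in $\alpha$) that, for a point $y$ which is isolated inside some Cantor-Bendixson derivative, the measure $\delta_y$ has no weak$^*$-neighborhood of norm-diameter $\leq \varepsilon$ in what remains of the measure space at that stage. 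The monotonicity step is more routine but still needs the rescaling argument to be carried out uniformly across both $\alpha$ and $\varepsilon$.
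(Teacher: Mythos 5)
Your proposal is correct and follows essentially the same route as the paper, which does not prove this theorem itself but cites \cite{asplunduniversal} and \cite{wojtaszczyk} and explicitly frames the argument as an ordinal (Szlenk-type) index on dual balls in the spirit of the Cantor--Bendixson height: monotonicity under embeddings, the bound $\mathrm{Sz}(X)<\kappa^+$ via the weight of $(B_{X^*},w^*)$, and unboundedness witnessed by $C(K)$ for scattered $K$ of large height. Your identification of the delicate points (the $\varepsilon$-rescaling in the monotonicity step and the lower bound $\mathrm{Sz}(C(K_\alpha))\geq \mathrm{ht}(K_\alpha)$ via Dirac measures) matches where the cited proofs actually do the work.
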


The dual balls of Asplund spaces allow to define a version of Cantor-Bendixon height.
However the class of Asplund spaces is not associated with a class of compact spaces:
if we were to have $C(K)$ Asplund, $K$ would be scattered but dual balls are always connected, so
never scattered. Radon-Nikod\'ym compacta are exactly the weakly$^*$ compact subspaces of
dual balls of Asplund spaces and are associated with the class of Asplund generated Banach spaces.
Thus we can also define Cantor-Bendixon or Szlenk index on them, however it depends on
a particular representation as a weakly$^*$ compact subspace of the dual ball of an Asplund space.
It creates the difficulty which probably explains why
 the question of universal object in these classes was not addressed in the literature. 
As Eberlein and uniform Eberlein compacta are Radon-Nikod\'ym, it
 follows from results of \cite{ug} i.e., \ref{ugresults} that it is consistent that there are no universal
objects in $\RN_{2^\omega}$,  $\RN_{\omega_1}$, $\QRN_{2^\omega}$, $\QRN_{2^\omega}$,
$\mathcal{AG}_{2^\omega}$,  $\mathcal{AG}_{\omega_1}$, $\mathcal{SAG}_{2^\omega}$,  $\mathcal{SAG}_{\omega_1}$.

The fragmentability of (quasi) Radon-Nikod\'ym compacta
mentioned above suggest, however the possibility of introducing  an ordinal index
which could be used for proving the nonexistence of universal spaces like Cantor-Bendixon height of a scattered space can be used to
prove the nonexistence of a universal metrizable scattered compact space.  Remarks 
concerning this topic are implicitly used in \cite{argyrosben} p. 308
or in \cite{asplunduniversal} p. 2034. Here we make this problem explicit taking into account the result of \cite{rn}:

\begin{question} Is it possible to associate to each quasi Radon-Nikod\'ym compact $K$ an ordinal index $i(K)$ having the
following properties:
\begin{itemize}
\item $|i(K)|$ is not bigger than the weight of $K$, 
\item If $L$ is a closed subset of $K$ or if $L$ is a continuous image of $K$, then $i(L)\leq i(K)$,
\item For every $\alpha<\kappa^+$ there is a quasi Radon-Nikod\'ym compactum $K$ of weight $\kappa$ such that
$i(K)>\alpha$.
\end{itemize}
\end{question}

In \cite{argyrosben} and index having the properties above was introduced for Eberlein compacta.
To avoid the dependence of the representation of a given Eberlein compact space
the authors exploited the possibility of embedding them as weakly compact subspaces in $c_0(\kappa)$.
However Radon-Nikod\'ym compacta which are not Eberlein compact do not admit such embeddings.
It is clear that the positive answer to the above question would give the negative answer to:

\begin{question} Is it consistent that there are universal spaces in one of the classes $\RN_{2^\omega}$,  $\RN_{\omega_1}$, $\QRN_{2^\omega}$, $\QRN_{2^\omega}$?
\end{question}

\begin{question} Is it consistent that there are universal spaces in one of the classes $\mathcal{AG}_{2^\omega}$,  $\mathcal{AG}_{\omega_1}$, $\mathcal{SAG}_{2^\omega}$,  $\mathcal{SAG}_{\omega_1}$?
\end{question}

\vskip 15pt
\subsection{All compact spaces and Banach spaces of a given size}

Under many additional set-theoretic assumptions there exist  universal compact spaces of a given weight and so 
applying \ref{CKtransfer} one
can obtain universal Banach spaces of a given density.

\begin{theorem}[\cite{volpin}] Assume GCH.
Then there is a universal compact space  in $\K_{\kappa}$ for each cardinal number $\kappa$
and so there is isometrically universal Banach space in each  $\mathcal B_{\kappa}$.
\end{theorem}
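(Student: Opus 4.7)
The plan is to reduce the statement to a Stone-dualizable claim about Boolean algebras, construct a universal Boolean algebra of size $\kappa$ using GCH, and then transfer everything to Banach spaces via Proposition \ref{CKtransfer}.

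First I would reduce to the totally disconnected case. Every compact $L$ of weight $\leq\kappa$ embeds as a closed subspace of $[0,1]^\kappa$, and the canonical coordinatewise binary-expansion surjection $\{0,1\}^\kappa\twoheadrightarrow[0,1]^\kappa$ pulled back to $L$ yields a closed totally disconnected $M\subseteq\{0,1\}^\kappa$ of weight $\leq\kappa$ mapping continuously onto $L$. It therefore suffices to produce a single totally disconnected compact $K$ of weight $\leq\kappa$ which maps continuously onto every such $M$.

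Next, by the Stone duality recalled in Section 2, producing such a $K$ is equivalent to producing a Boolean algebra $\mathcal A$ of cardinality $\leq\kappa$ into which every Boolean algebra of cardinality $\leq\kappa$ embeds: monomorphisms of Boolean algebras correspond to continuous surjections of the Stone spaces. Under GCH, such a universal Boolean algebra of every infinite cardinality $\kappa$ exists by standard model theory. The first-order theory of Boolean algebras is countable, and GCH guarantees, for every infinite $\kappa$, the existence of a universal model of cardinality $\kappa$: for regular $\kappa$, $2^{<\kappa}=\kappa$ allows a back-and-forth construction yielding a $\kappa$-saturated algebra of size $\kappa$, which is automatically universal for structures of size $\leq\kappa$; for singular $\kappa$, GCH lets one take an elementary chain of saturated algebras of smaller regular cardinalities and use the amalgamation property of Boolean algebras.

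Setting $K=K_{\mathcal A}$ gives a universal object in $\K_\kappa$: for any $L\in\K_\kappa$ one takes a totally disconnected $M$ as in the first step, embeds its Boolean algebra $\mathcal A_M$ into $\mathcal A$, obtains the dual continuous surjection $K\twoheadrightarrow M$, and composes with $M\twoheadrightarrow L$. The Banach space half of the theorem is then immediate: by Theorem \ref{associationlist}(6) the classes $\K_\kappa$ and $\mathcal B_\kappa$ are strongly associated, so Proposition \ref{CKtransfer} yields that $C(K)$ is isometrically universal for $\mathcal B_\kappa$. The main obstacle is the existence of the universal Boolean algebra at singular cardinals, which relies on non-trivial model-theoretic machinery (amalgamation and chains of saturated models) rather than on any Banach-space or topological content; the rest of the argument is essentially formal manipulation of Stone duality and the associations developed in Section 2.
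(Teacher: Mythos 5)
Your proposal is correct in substance, but it takes a different route from the paper simply because the paper offers no proof of the compact-space half: the existence of a universal compactum in $\K_\kappa$ under GCH is cited outright from Esenin-Volpin \cite{volpin}, and the only argument actually carried out is the transfer to $\mathcal B_\kappa$ via \ref{CKtransfer}, exactly as in your last step. What you supply in addition is a self-contained proof of the cited topological fact: the reduction of $\K_\kappa$ to closed subspaces of $\{0,1\}^\kappa$ via the binary-expansion surjection, the translation through Stone duality to the existence of a Boolean algebra of size $\leq\kappa$ embedding all Boolean algebras of size $\leq\kappa$, and the model-theoretic existence of such an algebra under GCH. This is legitimate and is in fact the "general model-theoretic argument" the paper alludes to at the start of Section 5 as an alternative to explicit constructions. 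Two points should be tightened. First, saturated (or special) models are universal only for models of their own \emph{complete} theory, and the theory of Boolean algebras is not complete; you should pass to atomless Boolean algebras (every Boolean algebra of size $\leq\kappa$ embeds into an atomless one of the same size, e.g.\ by taking a free product with the countable atomless algebra, and that theory is complete). Second, at singular $\kappa$ the relevant device is the special model of cardinality $\kappa$, which exists under GCH because $\kappa=\sup_{\lambda<\kappa}2^\lambda$ for limit cardinals and is universal by the standard Chang--Keisler argument; the appeal to "the amalgamation property" is not what does the work there. Neither point is a genuine gap, only a matter of stating the standard facts precisely.
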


\begin{theorem}[\cite{parovicenko}]\label{parovicenko} Assume CH. Then $\N^*$ is universal in $\K_{2^\omega}$
and so $l_\infty/c_0$ is isometrically universal in $\mathcal B_{2^\omega}$.
\end{theorem}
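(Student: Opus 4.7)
The plan is to establish the topological claim that under CH every compact Hausdorff space $L$ of weight $\leq 2^\omega=\aleph_1$ is a continuous image of $\N^*=\beta\N\setminus\N$, and then to deduce the Banach space statement by applying \ref{CKtransfer} to the strongly associated pair $(\K_{2^\omega},\mathcal B_{2^\omega})$ from \ref{associationlist}(6), using that $C(\N^*)$ is isometric to $\ell_\infty/c_0$.

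First I would reduce to the zero-dimensional case. Every compact $L$ of weight $\leq 2^\omega$ embeds into $[0,1]^{2^\omega}$, and the coordinatewise binary-expansion surjection $\{0,1\}^{\N\times 2^\omega}\twoheadrightarrow [0,1]^{2^\omega}$ exhibits $L$ as the continuous image of a zero-dimensional compactum $L'$ of the same weight. By the Stone duality recalled in Section~2, continuous surjections $\N^*\twoheadrightarrow L'$ correspond bijectively to Boolean embeddings of the clopen algebra $\mathcal A_{L'}$ into $\mathcal A_{\N^*}=P(\N)/\mathrm{fin}$; under CH we have $|\mathcal A_{L'}|\leq\aleph_1$, so the theorem reduces to showing that every Boolean algebra $\mathcal B$ of cardinality $\leq\aleph_1$ embeds into $P(\N)/\mathrm{fin}$.

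Next I would build this embedding by a forth construction of length $\omega_1$. Write $\mathcal B=\bigcup_{\alpha<\omega_1}\mathcal B_\alpha$ as a continuous union of countable subalgebras with $\mathcal B_{\alpha+1}=\mathcal B_\alpha[b_\alpha]$, and construct embeddings $e_\alpha\colon\mathcal B_\alpha\hookrightarrow P(\N)/\mathrm{fin}$, taking unions at limit stages. The non-trivial step is the successor: given $e_\alpha$, produce $c_\alpha\in P(\N)/\mathrm{fin}$ such that sending $b_\alpha\mapsto c_\alpha$ extends $e_\alpha$ to an embedding of $\mathcal B_{\alpha+1}$. Every element of $\mathcal B_{\alpha+1}$ has the canonical form $(a_1\cdot b_\alpha)+(a_2\cdot b_\alpha')$ with $a_1,a_2\in\mathcal B_\alpha$ and vanishes precisely when $a_1\leq b_\alpha'$ and $a_2\leq b_\alpha$; so what is required is that $c_\alpha$ satisfy, for every $a\in\mathcal B_\alpha$, the biconditionals $e_\alpha(a)\leq c_\alpha\Leftrightarrow a\leq b_\alpha$ and $e_\alpha(a)\leq c_\alpha'\Leftrightarrow a\leq b_\alpha'$.

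The main obstacle---and the combinatorial heart of Parovi\v{c}enko's theorem---is the \emph{gap-filling lemma}: for any countable $\{I_n\},\{F_n\}\subseteq P(\N)$ with $I_n\subseteq^* F_m$ for all $n,m$, there is $C\subseteq\N$ with $I_n\subseteq^* C\subseteq^* F_n$ for every $n$. I would prove this by passing to monotone versions $I_0\subseteq I_1\subseteq\cdots$ and $F_0\supseteq F_1\supseteq\cdots$, choosing integers $k_0<k_1<\cdots$ with $I_n\setminus F_n\subseteq[0,k_n)$, and setting
\[C=\bigcup_{n\in\N}\bigl(F_n\cap[k_n,k_{n+1})\bigr);\]
a direct verification shows $I_n\setminus C\subseteq[0,k_n)$ and $C\setminus F_n\subseteq[0,k_n)$. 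To turn gap-filling into the type realization required in the successor step, I would write $\mathcal B_\alpha$ as an increasing union of finite subalgebras and, for each atom $a$ of such a subalgebra, split $e_\alpha(a)$ inside $P(\N)/\mathrm{fin}$ according to whether $b_\alpha$ kills $a$, contains $a$, or properly splits it---the splitting case being handled by the atomlessness of $P(\N)/\mathrm{fin}$ together with gap-filling applied to the coherent towers of such splits across the levels. Running the recursion through $\omega_1$ produces the embedding $\mathcal B\hookrightarrow P(\N)/\mathrm{fin}$; composing the resulting continuous surjection $\N^*\twoheadrightarrow L'$ with $L'\twoheadrightarrow L$ completes the topological statement, and the Banach space conclusion follows as indicated.
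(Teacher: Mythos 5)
The paper does not actually prove this theorem: the topological half is a citation of Parovi\v{c}enko's 1963 result, and the Banach-space half (``and so \ldots'') is just Proposition \ref{CKtransfer} applied to the associated pair $\K_{2^\omega}$, $\mathcal B_{2^\omega}$ of Theorem \ref{associationlist}(6), together with the isometric identification of $C(\N^*)$ with $\ell_\infty/c_0$. Your endgame is exactly that, and your reconstruction of the topological part follows the route the paper itself gestures at in Section 6: the Parovi\v{c}enko property of $\wp(\N)/\mathrm{fin}$ (every Boolean algebra of cardinality $\leq\omega_1$ embeds into it) plus Stone duality, with CH entering only to identify $2^\omega$ with $\omega_1$. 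Your reduction to zero-dimensional preimages, the translation into a Boolean embedding problem, the canonical-form analysis of the one-generator extension $\mathcal B_\alpha[b_\alpha]$, and the gap-filling lemma with its $[k_n,k_{n+1})$-block construction are all correct.

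The one step that does not work as written is the successor step. Classifying each atom $p$ of a finite level as killed by, contained in, or split by $b_\alpha$, choosing a proper piece $d_p<e_\alpha(p)$ for each splitting atom, and then gap-filling the resulting towers runs into a coherence problem you name but do not resolve: a splitting atom $p$ of level $n$ may resolve at level $n+1$ into an atom $q\leq b_\alpha$, and the requirement $e_\alpha(q)\leq^* d_p$ cannot be guaranteed for a piece $d_p$ chosen before $q$ was visible, so the family $\{I_n\},\{F_n\}$ you would feed into gap-filling need not satisfy $I_n\subseteq^* F_m$. The standard repairs are either to first place $c_\alpha$ between the ideal $\{e_\alpha(a):a\leq b_\alpha\}$ and the filter $\{e_\alpha(a):b_\alpha\leq a\}$ by gap-filling and then correct it by a further diagonalization, adding or removing an infinite witness inside $e_\alpha(a)$ for each of the countably many $a\in\mathcal B_\alpha$ with $a\wedge b_\alpha\neq 0$ (respectively $a\wedge b_\alpha'\neq 0$) for which no nonzero element of $\mathcal B_\alpha$ below $a\wedge b_\alpha$ certifies the condition; or to observe that $\wp(\N)/\mathrm{fin}$ is a reduced power of the two-element algebra over the Fr\'echet filter and hence $\aleph_1$-saturated, so the countable, finitely satisfiable quantifier-free type of $b_\alpha$ over $e_\alpha[\mathcal B_\alpha]$ is realized outright. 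With either repair your argument is the classical proof and the theorem follows as you indicate.
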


However, the above results require some additional set theoretic assumptions as seen from the following theorems.

\begin{theorem}[\cite{dowhart}] It is consistent that there is no
universal compact space for $\K_{2^\omega}$.
\end{theorem}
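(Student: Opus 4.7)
The plan is to establish the result by forcing, starting from a model of GCH and producing a generic extension in which no compact Hausdorff space of weight $\leq 2^\omega$ can surject onto every other such space.

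First, I would fix a ground model $V \models \text{GCH}$ and choose a cardinal $\kappa > \omega_1$ (for concreteness $\kappa = \omega_2$) as the intended value of the continuum in the extension. The forcing $\PP$ would be a ccc iteration or product of length $\kappa$ which at stage $\alpha$ adjoins a compact Hausdorff space $K_\alpha$ of weight $\omega_1$, for instance via the Stone space of a generic Boolean algebra added by a suitable combinatorial poset. The forcing should be arranged so that (i) in $V^\PP$ one has $2^\omega = \kappa$, and (ii) each $K_\alpha$ carries a ``generic'' combinatorial feature, e.g.\ a generic independent family of clopen sets, that is not present in any compact space lying in the intermediate model $V[G_\alpha]$.

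The main step is then a capturing and genericity argument. Suppose toward contradiction that $K \in V[G]$ is a surjectively universal compact space of weight $\leq \kappa$. A standard nice-name analysis, together with ccc-ness of $\PP$, produces some $\alpha < \kappa$ such that both $K$ and the Banach space $C(K)$ already lie in $V[G_\alpha]$. Universality provides a continuous surjection $K \to K_{\alpha+1}$, equivalently by Proposition~\ref{dualities} an isometric embedding $C(K_{\alpha+1}) \hookrightarrow C(K)$, and hence by Proposition~\ref{semistone} a homeomorphic copy of $K_{\alpha+1}$ inside the weak$^*$-compact ball $B_{C(K)^*}$. But this ball already lives in $V[G_\alpha]$, which would exhibit $K_{\alpha+1}$ up to homeomorphism in $V[G_\alpha]$, contradicting its genericity over that model. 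Since $K$ was arbitrary, no universal exists.

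The main obstacle is arranging the forcing so that the ``generic combinatorial feature'' of $K_{\alpha+1}$ really cannot appear inside \emph{any} compact subspace of an object lying in $V[G_\alpha]$, not merely inside small or explicitly named ones. Continuous images of compacta can be quite wild, so the genericity has to be robust: one needs the distinguishing feature to be preserved up to homeomorphism and to be witnessable by a set-theoretic statement provably false in $V[G_\alpha]$. This follows the template of the forcing proofs behind Theorem~\ref{ugresults} and the Shelah graph non-universality results alluded to around Theorem~\ref{negativebell}, transposed from the Banach space or graph setting to the purely topological one.
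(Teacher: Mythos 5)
The paper does not actually prove this statement; it is quoted from Dow and Hart \cite{dowhart}, so your sketch has to stand on its own, and as written it has two genuine gaps. The first is the capturing step. You want a nice-name/ccc argument to place the candidate universal space $K$, of weight $\leq\kappa=2^\omega$, into an intermediate model $V[G_\alpha]$ with $\alpha<\kappa$. Such reflection arguments only capture objects coded by fewer than $\mathrm{cf}(\kappa)$ many reals (single reals, countable structures, small sets of ordinals). A compact space of weight $\kappa$ is coded by $\kappa$ many reals --- say as a closed subset of $[0,1]^\kappa$ --- and in a length-$\kappa$ ccc iteration there are plenty of such spaces lying in no proper initial segment (e.g.\ the Stone space of the algebra generated by all $\kappa$ generic reals). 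So there need not be any $\alpha$ as claimed, and the contradiction scheme collapses at its first move.

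The second gap would bite even if capturing worked. From the surjection of $K$ onto $K_{\alpha+1}$ you produce, via Propositions \ref{dualities} and \ref{semistone}, a homeomorphic copy of $K_{\alpha+1}$ inside $B_{C(K)^*}$, and then conclude that since $B_{C(K)^*}$ lies in $V[G_\alpha]$ the space $K_{\alpha+1}$ is exhibited in $V[G_\alpha]$. It is not: that copy is a \emph{new} closed subspace, constructed in $V[G]$, of a compact space belonging to $V[G_\alpha]$, and old compacta routinely acquire new closed subspaces in generic extensions (already $[0,1]$ does). Being a subspace or continuous image of a ground-model space does not place a space in the ground model up to homeomorphism, so no ``generic combinatorial feature'' of $K_{\alpha+1}$ is contradicted. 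What the genuine arguments in this area do --- Dow--Hart for compacta, and the Banach-space analogues in \cite{universal} and \cite{ug} --- is quite different: one fixes a specific hard target (or diagonalizes against all candidates), analyzes a \emph{name} for the surjection or embedding itself via $\Delta$-system and isomorphism-of-names arguments for the Cohen-like forcing, and shows that such a map would impose on the candidate universal space a structure (a long chain of closed sets, a large independent or biorthogonal-like family) that genericity forbids. Your sketch does not identify any such structure, and supplying it amounts to essentially the whole content of the cited proof.
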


The consistency of the nonexistence of a universal space in $\K_{\omega_1}$ follows from the next result 
and \ref{CKtransfer} but
it was probably known before.

\begin{theorem}[\cite{universal}]\label{universal} The existence of  an isomorphically universal Banach space in $\mathcal B_{2^\omega}$
or in $\mathcal B_{\omega_1}$
is undecidable. 
\end{theorem}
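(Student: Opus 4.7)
The plan is to establish undecidability by separately proving consistency of existence and consistency of nonexistence. The existence direction is immediate from results already in the paper: under CH, Theorem \ref{parovicenko} gives $\ell_\infty/c_0$ as an isometrically universal member of $\mathcal{B}_{2^\omega}$, and since CH entails $\omega_1 = 2^\omega$, this simultaneously witnesses universality in $\mathcal{B}_{\omega_1}$. Alternatively, under GCH one may apply Volpin's theorem to obtain a universal compact space of weight $\kappa$ and then use \ref{CKtransfer} to obtain a universal Banach space in $\mathcal{B}_{\omega_1}$ and in $\mathcal{B}_{2^\omega}$.

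For the nonexistence direction, I would start from a model of GCH and pass to a forcing extension in which $2^\omega = \omega_2$ and no Banach space of density $\omega_1$ (or $\omega_2$) is universal. The forcing $\mathbb{P}$ should add $\omega_2$ generic Banach spaces $X_\alpha$, each of density $\omega_1$, built from finite conditions that specify a partial norm on a rational vector space with basis indexed by a countable subset of $\omega_1$. Following the pattern of Brech--Koszmider, the conditions are designed so that the generic space encodes a combinatorial object (for example, a graph or an almost disjoint family on $\omega_1$) against which Shelah-style nonexistence arguments for universal graphs can be applied. After iterating or producting up to $\omega_2$, one argues as follows: if $X$ in the extension is a candidate universal Banach space of density $\omega_1$, then using an appropriate chain condition and a $\Delta$-system lemma, any isomorphic embedding $T : X_\beta \to X$ is coded by countably much information appearing at some stage $\alpha < \beta$, and by genericity at stage $\beta$ no such $T$ can exist.

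The main obstacle is precisely this last reduction. Unlike in the combinatorial category, an embedding $T : Y \to X$ is a bounded linear operator, an analytic object not obviously determined by countable data, so one has to design $\mathbb{P}$ to enforce enough rigidity on the generic space that every potential embedding is forced to be, up to arbitrarily small perturbation, a rational combination of basis vectors specified by finitely many conditions. This rigidity-via-genericity, ensuring that the generic Banach space has an almost canonical basis so that embeddings become essentially combinatorial, together with the $\Delta$-system analysis, is the technical heart of the argument and what turns a combinatorial non-universality intuition into an actual Banach-space forcing proof.
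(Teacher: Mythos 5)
Your existence half is correct and is exactly what the survey already supplies: under CH, Theorem \ref{parovicenko} makes $\ell_\infty/c_0$ isometrically (hence isomorphically) universal in $\mathcal B_{2^\omega}=\mathcal B_{\omega_1}$, and Esenin--Volpin's theorem together with \ref{CKtransfer} covers the GCH case for every density. The genuine gap is the entire nonexistence half, and you in effect concede this when you call the reduction of embeddings to combinatorial data ``the technical heart of the argument'': what you give there is a programme, not a proof. Two concrete problems. First, your capturing claim --- that an isomorphic embedding $T:X_\beta\to X$ ``is coded by countably much information appearing at some stage $\alpha<\beta$'' --- is false as stated: $X_\beta$ has density $\omega_1$, so $T$ is determined only by data of size $\omega_1$, and a bounded operator between nonseparable spaces does not reflect to countable fragments; any correct argument has to capture the candidate universal $X$ in an intermediate extension and run the $\Delta$-system/nice-name analysis at the level of uncountable families of vectors and functionals, which is precisely the work that is missing. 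Second, you give no reason why a ``generic norm'' added by finite conditions on a rational vector space should yield a space rigid enough that every operator into a captured space is essentially combinatorial; objects added generically by finite conditions tend to be homogeneous and saturated rather than rigid, so the ``almost canonical basis'' property you need is a theorem to be proved, not a design feature you can stipulate.

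For comparison, the cited proof of Brech and Koszmider does not force generic norms at all. Roughly, one adds Cohen reals to a model of CH (so $2^\omega=\omega_2$) and works with spaces of the form $C(K_{\mathcal A})$ where $\mathcal A$ is a Boolean algebra generated generically by the new Cohen sets; given a candidate universal $X$, one captures $X$ in an intermediate extension, takes $\mathcal A$ generated by Cohen reals mutually generic over that extension, and shows that an isomorphic embedding $T:C(K_{\mathcal A})\to X$ would produce, via the images of the characteristic functions of the generators and suitable norming functionals pulled back by $T^{*}$, an uncountable configuration that a $\Delta$-system argument on nice names rules out. The passage from the combinatorial non-universality intuition to an actual statement about bounded linear embeddings is carried out there through this Boolean-algebra/$C(K)$ route; it is exactly the step your sketch identifies but does not perform.
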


Surjectively universal Banach spaces exist without any special set-theoretic assumptions for
any density character $\kappa$. It is a classical result that $\ell_1(\kappa)$ is such a space.
We can also obtain weak universal Banach spaces of the form $C(K)$  without any special set-theoretic assumptions for
any density character. This is a consequence of the existence of injectively universal
compact spaces:

\begin{proposition} Let $\kappa$ be a cardinal.
$[0,1]^\kappa$ is injectively universal in $\K_{2^\omega}$. In particular, any space
that maps onto $[0,1]^{2^\omega}$, and so, 
$N^*$, is weakly universal in $\K_{2^\omega}$. In particular $\ell_\infty/c_0$ is weakly
universal for $\mathcal{B}_{2^\omega}$.
\end{proposition}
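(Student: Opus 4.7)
The plan is to chain together three claims, the first two routine and the third the real content.

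First, I would prove that $[0,1]^\kappa$ is injectively universal in $\mathcal{K}_\kappa$ (the statement as printed is presumably for $\mathcal{K}_\kappa$, not $\mathcal{K}_{2^\omega}$, since otherwise the ``In particular'' does not parse). This is the classical Tychonoff embedding theorem: given a compact Hausdorff $K$ of weight $\leq \kappa$, fix a base $\{U_\alpha : \alpha<\kappa\}$ and apply Urysohn's lemma to obtain $\kappa$ many continuous maps $f_\alpha:K\to [0,1]$ that separate points from disjoint closed sets. The evaluation $e(x)=(f_\alpha(x))_{\alpha<\kappa}$ is then a continuous injection from a compact space into a Hausdorff one, hence a topological embedding into $[0,1]^\kappa$.

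Second, I would show that any $L$ admitting a continuous surjection $\pi:L\to [0,1]^{2^\omega}$ is weakly universal in $\mathcal{K}_{2^\omega}$. Given $K\in\mathcal{K}_{2^\omega}$, the previous step provides a homeomorphic copy $K'\subseteq [0,1]^{2^\omega}$. Then $\pi^{-1}[K']$ is closed in $L$ and $\pi\!\restriction\!\pi^{-1}[K']$ is a continuous surjection onto $K'\cong K$. The final ``In particular'' about $\ell_\infty/c_0 \cong C(\mathbb{N}^*)$ then follows from Proposition \ref{holsztynski}(2), since $\mathcal{K}_{2^\omega}$ and $\mathcal{B}_{2^\omega}$ are associated by Theorem \ref{associationlist}(6).

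Third — the main point — one must verify that $\mathbb{N}^*$ itself maps onto $[0,1]^{2^\omega}$. I would use the Hewitt--Marczewski--Pondiczery theorem to fix a countable dense subset $D=\{d_n:n\in\mathbb{N}\}$ of $[0,1]^{2^\omega}$, enumerated so that each $d_n$ appears infinitely often. The map $f:\mathbb{N}\to [0,1]^{2^\omega}$ sending $n\mapsto f(n)$ extends to $\tilde f:\beta\mathbb{N}\to [0,1]^{2^\omega}$, whose image is compact and contains the dense set $D$, hence is onto. To restrict surjectivity to $\mathbb{N}^*=\beta\mathbb{N}\setminus\mathbb{N}$: for any $p\in [0,1]^{2^\omega}$, pick an infinite $A\subseteq\mathbb{N}$ with $f[A]$ clustering at $p$ (choose $A=\{m:f(m)=p\}$ if $p\in D$, otherwise use a countable neighbourhood base at $p$ and diagonalise); any free ultrafilter containing $A$ maps under $\tilde f$ to $p$.

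The main obstacle is this third step, and in particular ensuring that the surjection survives passage from $\beta\mathbb{N}$ to $\mathbb{N}^*$. The trick of repeating each $d_n$ infinitely often is what makes the points of $D$ accessible by free ultrafilters; for points outside $D$, accessibility is automatic since they are genuine limit points of $D$ along countably many approaching sequences.
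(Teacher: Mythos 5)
Your proposal is correct in substance but reaches the key fact --- that $\N^*$ maps onto $[0,1]^{2^\omega}$ --- by a genuinely different route. The paper takes an independent family of $2^\omega$ infinite subsets of $\N$, which yields a continuous surjection of $\N^*$ onto $2^{2^\omega}$, and then composes coordinatewise with the standard map of the Cantor set onto $[0,1]$. You instead invoke the Hewitt--Marczewski--Pondiczery theorem to get a countable dense subset of $[0,1]^{2^\omega}$, enumerate it with infinite repetitions, and extend the resulting map on $\N$ to $\beta\N$. Both are classical; the independent-family argument is self-contained combinatorics inside $\wp(\N)$ and produces the surjection on $\N^*$ directly, while yours trades that for the (nontrivial but standard) separability of $[0,1]^{2^\omega}$ and is arguably quicker once HMP is granted. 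For the final Banach-space clause you use Proposition~\ref{holsztynski}(2) where the paper invokes Proposition~\ref{weaklyutrick}; either works, and yours is if anything the more direct citation. Your reading of $\K_{2^\omega}$ as a typo for $\K_\kappa$ in the first clause is the right one.

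One local flaw: in passing from surjectivity of $\tilde f$ on $\beta\N$ to surjectivity on $\N^*$, your treatment of points $p\notin D$ appeals to ``a countable neighbourhood base at $p$'', but $[0,1]^{2^\omega}$ is not first countable, so no such base exists and the proposed diagonalisation does not get off the ground. The step is nevertheless unnecessary: the infinite-repetition trick already shows $D\subseteq \tilde f[\N^*]$ (each $d_n$ is the $\tilde f$-image of any free ultrafilter containing the infinite fibre $f^{-1}(d_n)$), and $\tilde f[\N^*]$ is compact, hence closed; a closed set containing the dense set $D$ is all of $[0,1]^{2^\omega}$. This is exactly the argument you already used for $\beta\N$, and it applies verbatim to $\N^*$. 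With that repair the proof is complete.
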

\begin{proof} Clearly any compact space of weight $\kappa$ embeds into
$[0,1]^\kappa$. $N^*$ maps onto $[0,1]^{2^\omega}$ because $\wp(\N)$ has
an independent family of infinite sets which gives a continuous surjection of
$\N^*$ onto $2^{2^\omega}$. Using the standard continuous mapping
of the Cantor set onto $[0,1]$ we obtain the desired mapping of $\N^*$ onto  $[0,1]^{2^\omega}$.
To go to the Banach spaces use \ref{weaklyutrick}.
\end{proof}

However, there are no surjectively universal Banach spaces of the form $C(K)$ as noted after
\ref{CKtransfer}. Let us finish this section with a different kind of a problem.  There are many
results about the existence and nonexistence of universal structures obtained by methods from
 a field of mathematical logic which is called model theory which deals with structures, languages
and theories expressible in these languages which are true in theses structures (models of theories).
There is a big limitation of applications of model theory in the theory of Banach spaces (or even topology
of compact spaces). The Banach space theory is not a first order theory which is best understood and investigated type
of a language in model theory.  It may happen however that the existence of universal Banach spaces can be
equivalent to the existence of simpler structures describable in terms of a first order language (which would code Banach spaces). Links between universality of Boolean algebras (a first order structure) and Banach spaces
are well know  (see e.g., 1.1. \cite{universal}) but as far as now equivalences were not obtained.
Also the existence of universal graphs, which are first order structures is implied  by
the existence of universal 
uniform Eberlein compact spaces as shown at the end of \cite{bell}. Of course any equivalence
would give a possibility of transferring model-theoretic results to questions concerning universal Banach spaces.
One concrete question could be:

\begin{question}\label{questiongraph} Is it consistent that there is a universal graph 
of size $\omega_1$ ($2^\omega$) but there is no universal Banach space for 
$\mathcal B_{\omega_1}$ ($\mathcal B_{2^\omega}$)?
\end{question}

It should be noted that there are some interesting results where first order model theory is used
to conclude the universality results concerning Banach spaces like for example in the work
of Shelah and Usvyatsov (\cite{shelahusvyatsov}) where it was first proved that it is consistent that there is
no isometrically universal Banach space for $\mathcal B_{2^\omega}$.
However these results concern only the isometric theory as far as now.

\section{The non-universality of $l_\infty/c_0$ and $\N^*$}

The spaces $\ell_\infty/c_0$ and $\N^*$ play a special role in the classes $\mathcal B_{2^\omega}$
and $\K_{2^\omega}$. One of the reasons is the universality result of \ref{parovicenko}. Of course
$\ell_\infty/c_0$ is isometric to $C(\N^*)$.
After K. Kunen proved that $[0, 2^\omega]$ may not be a continuous image of
$\N^*$ the question  under which assumptions which compact spaces must be continuous images of $\N^*$
was considered by many authors. Recently the dual question, which Banach spaces must embed isomorphically
or isometrically in $\ell_\infty/c_0$
was considered be several authors as well. In this section we survey these results.
It is not difficult to see the following well known facts:

\begin{proposition}All separable compact spaces are continuous images 
of $\N^*$ and all Banach spaces whose dual balls are weakly separable isometrically embed
in $\ell_\infty/c_0$.
\end{proposition}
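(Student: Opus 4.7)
The plan is to prove the two assertions separately, with the Banach space statement reducing to the topological one via the isometric-embedding propositions already established. First I would fix an infinite separable compact Hausdorff space $K$ with a countable dense subset enumerated as $\{x_n : n \in \N\}$. To sidestep issues with isolated points, I would arrange for each $x_n$ to have infinite preimage: pick a bijection $\N \leftrightarrow \N \times \N$ and set $f(n,k) = x_n$, regarding $f$ as a map $\N \to K$. Since $K$ is compact Hausdorff, $f$ extends uniquely to a continuous map $\beta f : \beta\N \to K$, whose image is closed and contains the dense set, hence equals $K$.

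The main step is to show that already $\beta f \restriction \N^*$ is surjective. Given $y \in K$, I would verify that $f^{-1}(U)$ is infinite for every open neighborhood $U$ of $y$: if $y = x_n$ for some $n$, the doubling gives an entire infinite fiber inside $f^{-1}(U)$; otherwise $y$ cannot be isolated (any isolated point of $K$ must lie in the dense set, since $K$ is $T_1$), so $U$ meets $\{x_n\}$ in infinitely many points---were there only finitely many, removing them from $U$ would yield a nonempty open set disjoint from $\{x_n\}$, contradicting density. Thus $\{f^{-1}(U) : U \ni y \text{ open}\}$ is a filter base of infinite subsets of $\N$; together with the Fr\'echet filter it generates a proper filter, which extends to a free ultrafilter $p \in \N^*$ satisfying $\beta f(p) = y$.

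For the second assertion, assume $B_{X^*}$ is weak$^*$-separable. Since $X$ is infinite dimensional, $B_{X^*}$ is infinite, so the first assertion supplies a continuous surjection $\varphi : \N^* \twoheadrightarrow B_{X^*}$. Proposition \ref{dualities}(1) then yields an isometric embedding of $C(B_{X^*})$ into $C(\N^*) \cong \ell_\infty/c_0$, and Proposition \ref{semistone}(2) gives an isometric embedding $X \hookrightarrow C(B_{X^*})$; composing produces the required isometry. I expect the only real subtlety to be the handling of isolated points of $K$ via the doubling construction---without it, an isolated $x_n$ would correspond only to the principal ultrafilter at $n$, so the restriction of $\beta f$ to $\N^*$ could miss that point; everything else is a direct application of facts already recorded in the paper.
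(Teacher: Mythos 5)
Your proof is correct, and for the first assertion it takes a genuinely different route from the paper's. The paper first replaces $K$ by a totally disconnected separable compact preimage $L$ (a Gleason space mapping irreducibly onto $K$), embeds the clopen algebra of $L$ into $\wp(\N)$ by evaluating clopen sets on a countable dense subset, and then invokes the Stone duality to realize $L$, hence $K$, as a continuous image of $\beta\N$, and therefore of $\N^*$ since $\beta\N$ is a continuous image of $\N^*$. You instead argue directly: enumerate a dense set with every point repeated infinitely often, extend to $\beta f:\beta\N\rightarrow K$, and show that for each $y\in K$ the sets $f^{-1}(U)$, with $U$ ranging over open neighbourhoods of $y$, form a filter base of infinite subsets of $\N$, so that some free ultrafilter $p$ satisfies $\beta f(p)=y$. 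Your fiber-doubling device is exactly what is needed to keep isolated points of $K$ in the range of $\beta f\restriction \N^*$, and your verification that neighbourhoods of non-isolated points pull back to infinite sets (using that finite sets are closed, so a neighbourhood meeting the dense set only finitely often would leave a nonempty open set missing it) is sound; so is the Hausdorff argument identifying the $p$-limit with $y$. What the paper's route buys is consonance with the Stone-duality theme of Section 2 and the reusable intermediate fact that separable totally disconnected compacta are continuous images of $\beta\N$; what yours buys is a self-contained elementary argument avoiding Gleason spaces and Boolean algebras altogether. The second assertion is handled identically in both proofs: a continuous surjection of $\N^*$ onto the weak$^*$-separable compact $B_{X^*}$ combined with \ref{dualities}(1) and \ref{semistone}(2).
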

 \begin{proof} Consider a separable compact $K$. Let $L$ be a totally disconnected separable 
compact space which maps onto $L$ (for example take an a totally disconnected $L$ which maps irreducibly onto $K$,
like a Gleason space \cite{engelking} 6.3.19). By sending  the dense countable subset of $L$ onto $\N$ we can embed the
Boolean algebra of clopen subsets of $L$ into $\wp(\N)$,
 so by the Stone duality $L$ is a continuous image of $\beta\N$
which is a continuous image of $\N^*$. 

If $X$ has separable dual ball, then $\N^*$ maps onto $B_{X^*}$, so $\l_\infty/c_0\equiv C(\N^*)$ isometrically
embeds  $C(B_{X^*})$ which isometrically embeds $X$ by \ref{semistone}.
\end{proof}

Note that the class of Banach spaces with weakly$^*$ separable duals is not closed
under isomorphisms (Ex. 12.40 \cite{fabianetal}) and includes
all HI spaces.  In \cite{przymusinski} Przymusi\'nski  proved that all perfectly normal 
compact spaces are continuous images of $\N^*$. 
 The topological part of the following theorem of van Douwen and
Przymusi\'nski already can be dualized to Banach spaces:

\begin{theorem}[\cite{przymusinskivd}] Assume MA$+\neg$CH. Let $\kappa<2^\omega$. Then each compact
$K\in \K_\kappa$ is a continuous image of $\N^*$. In particular each Banach space
of density character $\kappa<2^\omega$ isometrically embeds into $\ell_\infty/c_0$.
\end{theorem}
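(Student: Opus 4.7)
The plan is to prove the topological statement first and then derive the Banach space statement as a formal consequence, using the machinery of Section 2.

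For the topological part, I would first reduce to the totally disconnected case: given any compact $K$ of weight $\kappa$, embed it into the cube $[0,1]^\kappa$ and pull back along the canonical surjection $\{0,1\}^{\N\times\kappa}\to [0,1]^\kappa$ that sends a binary sequence to the real it represents in $[0,1]$; the preimage of $K$ is a totally disconnected compact $L$ of weight $\leq\kappa$ which maps continuously onto $K$. It therefore suffices to find a continuous surjection $\N^*\to L$. By Stone duality this is equivalent to embedding the Boolean algebra $\mathcal A=\mathrm{Clop}(L)$, which has cardinality $\leq\kappa<2^\omega$, into $\mathrm{Clop}(\N^*)=\wp(\N)/\mathrm{fin}$.

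The core step is therefore: under MA$+\neg$CH, every Boolean algebra $\mathcal A$ with $|\mathcal A|<2^\omega$ embeds into $\wp(\N)/\mathrm{fin}$. I would do this by transfinite recursion along an enumeration $\{a_\alpha:\alpha<\kappa\}$ of $\mathcal A$, constructing a coherent sequence of embeddings $\varphi_\alpha$ of the subalgebras $\mathcal A_\alpha$ generated by $\{a_\beta:\beta<\alpha\}$. At a limit stage one takes unions. At the successor stage $\alpha\to\alpha+1$ the task is to produce a set $A\subseteq\N$ such that
\[
\varphi_\alpha(c)\subseteq^* A\ \text{whenever}\ c\in\mathcal A_\alpha,\ c\leq a_\alpha,
\]
and
\[
\varphi_\alpha(c)\cap A=^* \emptyset\ \text{whenever}\ c\in\mathcal A_\alpha,\ c\wedge a_\alpha=0,
\]
since such an $A$ (together with its complement) supplies a consistent value for $\varphi_{\alpha+1}(a_\alpha)$ compatible with every Boolean identity expressible from $\mathcal A_\alpha\cup\{a_\alpha\}$.

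The main obstacle is precisely this separation problem in $\wp(\N)/\mathrm{fin}$, and this is exactly where MA$+\neg$CH is used. The two families have cardinality $\leq|\mathcal A_\alpha|<2^\omega$, and the hypothesis that $\mathcal A_\alpha\cup\{a_\alpha\}$ is consistent in $\mathcal A$ translates into: every finite Boolean combination of members of the first family with complements of members of the second is non-zero in $\mathcal A_\alpha$, so by the inductive hypothesis has infinite $\varphi_\alpha$-image. Thus one must find $A$ compatible with a collection of fewer than $2^\omega$ many almost-inclusion and almost-disjointness constraints. This is handled either by applying MA to the natural ccc forcing adding such an $A$ (equivalently, diagonalising through $<2^\omega$ dense subsets of a $\sigma$-centred poset of finite approximations), or by invoking the standard consequence of MA$+\neg$CH that $\wp(\N)/\mathrm{fin}$ is $(2^\omega,{<}2^\omega)$-saturated. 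Either formulation supplies the required $A$ and keeps the induction going through all $\kappa<2^\omega$ stages.

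Finally, the Banach space statement falls out immediately. If $X$ has density character $\kappa<2^\omega$ then $B_{X^*}$ with the weak$^*$ topology has weight $\leq\kappa<2^\omega$ (a dense set of size $\kappa$ in $X$ yields $\kappa$ evaluation maps that separate the points of $B_{X^*}$ and generate the weak$^*$ topology). By the topological part, $B_{X^*}$ is a continuous image of $\N^*$, so by Proposition~\ref{dualities}(1), $C(B_{X^*})$ is isometric to a subspace of $C(\N^*)=\ell_\infty/c_0$; by Proposition~\ref{semistone}(2), $X$ is isometric to a subspace of $C(B_{X^*})$. Composing the two isometries gives the desired isometric embedding of $X$ into $\ell_\infty/c_0$.
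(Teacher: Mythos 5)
First, note that the paper does not actually prove this theorem: the topological statement is quoted from \cite{przymusinskivd}, and only the dualization in the final sentence has a counterpart in the text (the proof of the preceding proposition on separable dual balls). Your last paragraph reproduces that dualization correctly, and your reduction to the zero-dimensional case via Stone duality is also fine. The problem is the core transfinite construction. A first, repairable, gap: the two conditions you impose on $A$ do not make $\varphi_{\alpha+1}$ an \emph{embedding}. They guarantee well-definedness and the homomorphism identities, but not injectivity: for that you also need $\varphi_\alpha(c)\cap A$ infinite whenever $c\wedge a_\alpha\neq 0$ and $\varphi_\alpha(c)\setminus A$ infinite whenever $c\not\leq a_\alpha$. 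If no nonzero element of $\mathcal A_\alpha$ lies below $a_\alpha$ and none is disjoint from it, your two families of constraints are vacuous and $A=\emptyset$ satisfies them, which would send $a_\alpha$ to $0$.

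The more serious gap is the appeal to MA at the successor step. The separation problem you isolate is a gap-filling problem in $\wp(\N)/Fin$, and MA$+\neg$CH does \emph{not} solve it for arbitrary orthogonal families of size $<2^\omega$: Hausdorff $(\omega_1,\omega_1)$-gaps exist in ZFC, are finitely satisfiable separation problems of size $\omega_1<2^\omega$ with no solution, and are preserved by ccc forcing. Correspondingly, the ``natural forcing adding such an $A$'' is not ccc for such configurations, and $\wp(\N)/Fin$ is not $(2^\omega,{<}2^\omega)$-saturated under MA. Once $|\alpha|\geq\omega_1$, nothing in your construction prevents $\varphi_\alpha$ from mapping the two ideals $\{c:c\leq a_\alpha\}$ and $\{c:c\wedge a_\alpha=0\}$ --- which are separated by $a_\alpha$ inside $\mathcal A$ --- onto an unfillable gap of $\wp(\N)/Fin$, at which point the induction dies. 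Any correct element-by-element argument must build the partial embeddings with enough foresight to avoid creating such gaps, and that is precisely the missing idea. The route actually suggested by the title of \cite{przymusinskivd} sidesteps Boolean algebras entirely: embed $K$ into $[0,1]^\kappa$ and use MA for the $\sigma$-centred poset of finite sequences with finitely many ``neighborhood promises'' to produce a sequence $(d_n)$ in $[0,1]^\kappa$ with $\bigcap_n\overline{\{d_m:m\geq n\}}=K$; this needs only $\kappa<2^\omega$ dense sets, and the Stone--\v Cech extension of $n\mapsto d_n$ restricted to $\N^*$ is then a continuous surjection onto $K$.
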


Actually, the Parovichenko  property of $\wp(\N)/Fin$ gives that every Boolean algebra
of cardinality not bigger than $\omega_1$ embeds into $\wp(\N)/Fin$ which by the Stone duality and
\ref{dualities} gives:

\begin{proposition}
Every compact space in $\K_{\omega_1}$ is a continuous image of $\N^*$, every
Banach space in $\mathcal B_{\omega_1}$ embeds isometrically in $\ell_\infty/c_0$.
\end{proposition}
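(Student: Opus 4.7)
The plan is to deduce both parts from Parovichenko's ZFC embedding theorem for $\wp(\N)/Fin$, combined with Stone duality and the transfer machinery of Section 2. Parovichenko's theorem asserts that every Boolean algebra of cardinality $\leq\aleph_1$ embeds into $\wp(\N)/Fin$; I take this as a black box (see \cite{koppelberg}).

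For the topological statement, fix $K\in \K_{\omega_1}$. First I would replace $K$ by a totally disconnected preimage $L$ of weight $\leq\omega_1$: since $K$ has weight $\leq\omega_1$ it embeds into $[0,1]^{\omega_1}$, so composing with the standard continuous surjection $\{0,1\}^{\omega_1}\to [0,1]^{\omega_1}$ and taking the preimage of $K$ produces a compact totally disconnected $L\subseteq\{0,1\}^{\omega_1}$ of weight $\leq\omega_1$ that continuously surjects onto $K$ (the Gleason cover of $K$ works equally well). Its clopen algebra $\mathcal A_L$ has cardinality $\leq\omega_1$, so Parovichenko's embedding theorem supplies an injective Boolean homomorphism $g:\mathcal A_L\hookrightarrow \wp(\N)/Fin=\mathcal A_{\N^*}$. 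By the Stone duality recalled at the start of Section 2 this dualizes to a continuous surjection $\phi_g:\N^*\to L$, and composing with $L\to K$ produces the desired continuous surjection $\N^*\to K$.

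For the Banach space claim, let $X\in \mathcal B_{\omega_1}$. The weak$^*$ topology on $B_{X^*}$ has a subbase indexed by $X$, so the weight of $B_{X^*}$ is bounded by the density character of $X$; hence $B_{X^*}\in\K_{\omega_1}$. The first part then yields a continuous surjection $\N^*\to B_{X^*}$. By Proposition \ref{dualities}(1) this gives an isometric embedding $C(B_{X^*})\hookrightarrow C(\N^*)=\ell_\infty/c_0$, and by Proposition \ref{semistone}(2) we have an isometric embedding $X\hookrightarrow C(B_{X^*})$. Composing yields an isometric embedding $X\hookrightarrow \ell_\infty/c_0$, as required.

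The main obstacle is Parovichenko's embedding theorem itself; everything else is bookkeeping with Stone duality and Propositions \ref{dualities}, \ref{semistone}. Its ZFC proof is a back-and-forth transfinite recursion of length $\omega_1$, at each countable stage using the key combinatorial property that $\wp(\N)/Fin$ realises every finitely consistent countable type over a countable subalgebra (which rests on the fact that a decreasing sequence of infinite subsets of $\N$ always admits a pseudo-intersection inside or outside any prescribed finite family of infinite sets). I would quote this rather than reprove it, since the whole point of the argument is that the $\omega_1$ case is free of the extra set-theoretic assumptions that Theorem \ref{parovicenko} requires for the full $\K_{2^\omega}$.
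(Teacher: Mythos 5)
Your proposal is correct and follows exactly the route the paper sketches: Parovichenko's ZFC theorem that every Boolean algebra of cardinality $\leq\omega_1$ embeds into $\wp(\N)/Fin$, dualized via Stone duality to a surjection of $\N^*$ onto a totally disconnected preimage, and then transferred to Banach spaces through Propositions \ref{dualities} and \ref{semistone}. The only difference is that you spell out the reduction to a zero-dimensional cover and the weight bound on $B_{X^*}$, which the paper leaves implicit.
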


Polyadic space are by definition continuous images of products of the form
$A(\kappa)^{\tau}$. They are the smallest class of topological spaces containing
metric compact spaces and closed under products and continuous images, they also contain all
dyadic spaces i.e., continuous images of $2^\kappa$ (see \cite{gerlits}). We have the following:

\begin{theorem}[\cite{bellshapiro}]\label{polyadic} Every polyadic space $K$ of weight $\leq 2^\omega$ is a continuous
image of $\N^*$. In particular $C(K)$ for such a $K$ isometrically embeds in $l_\infty/c_0$.
\end{theorem}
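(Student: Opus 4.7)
The plan is to produce a continuous surjection $\N^*\twoheadrightarrow K$; the Banach space conclusion then follows immediately from Proposition \ref{dualities}(1), since $l_\infty/c_0$ is isometric to $C(\N^*)$. As both $\N^*$ and the polyadic $K$ are totally disconnected, Stone duality reframes the topological problem as an algebraic one: embed the clopen algebra of $K$ into $\wp(\N)/\mathrm{Fin}$.

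First I would reduce to a single universal target. If $K$ is polyadic of weight $\leq 2^\omega$, then by definition $K$ is a continuous image of some $A(\kappa)^\tau$; a standard weight/subproduct argument lets us assume $\kappa,\tau\leq 2^\omega$, and the natural projection $A(2^\omega)^{2^\omega}\twoheadrightarrow A(\kappa)^\tau$ reduces the goal to constructing a continuous surjection $\N^*\twoheadrightarrow A(2^\omega)^{2^\omega}$. Since the clopen algebra of $A(\kappa)$ is the finite--cofinite algebra on $\kappa$, the clopen algebra of $A(2^\omega)^{2^\omega}$ is the Boolean coproduct of $2^\omega$ copies of the finite--cofinite algebra on $2^\omega$, and it is this coproduct that must embed into $\wp(\N)/\mathrm{Fin}$.

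I would build the embedding from a doubly indexed family $\{A_\xi^\alpha : \xi,\alpha<2^\omega\}\subseteq\wp(\N)$ satisfying: (i) for each fixed $\xi$, the sets $\{A_\xi^\alpha\}_{\alpha<2^\omega}$ form a non-covering almost disjoint family, encoding the atoms $\{\alpha\}\mapsto[A_\xi^\alpha]$ of the $\xi$-th factor; and (ii) the $2^\omega$ columns are mutually Boolean independent modulo finite---for any finite set of distinct columns and any choice per column of either a finite union $\bigcup_{\alpha\in F}A_\xi^\alpha$ or its complement, the meet is infinite. Concretely, the desired surjection sends a non-principal ultrafilter $u$ on $\N$ to the point of $A(2^\omega)^{2^\omega}$ whose $\xi$-th coordinate is the unique $\alpha$ with $A_\xi^\alpha\in u$, or $\infty$ if no such $\alpha$ exists. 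Condition (i) makes the map well-defined and continuous, and condition (ii) gives surjectivity by producing, for any prescribed target, an ultrafilter realizing it via the finite intersection property.

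The main obstacle is realizing (i) and (ii) simultaneously in ZFC. This strengthens the classical independent family of size $2^\omega$ in $\wp(\N)$---which already yields $\N^*\twoheadrightarrow 2^{2^\omega}$, the case $\kappa=2$ of our target---to a layered system in which each ``coordinate slot'' is an entire almost disjoint family rather than a single binary splitting. I expect to carry this out by a transfinite recursion of length $2^\omega$: at each stage, below any infinite residue one can find both an almost disjoint family of size $2^\omega$ and an independent family of size $2^\omega$, and one interleaves these while preserving positivity of all meets already forced by previously chosen columns. No additional set-theoretic hypotheses are needed, as the entire construction lives inside $\wp(\N)$ and relies only on ZFC-provable combinatorial facts.
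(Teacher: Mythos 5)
The paper does not actually prove this theorem: the continuous surjection $\N^*\twoheadrightarrow K$ is quoted from \cite{bellshapiro}, and the only step the paper itself supplies is the passage from that surjection to the isometric embedding $C(K)\hookrightarrow C(\N^*)\equiv \ell_\infty/c_0$ via Proposition \ref{dualities}(1); this part you reproduce correctly. Your overall architecture for the hard half is also the right one: reduce to the single target $A(2^\omega)^{2^\omega}$ and translate, via Stone duality, into the existence of a matrix $\{A_\xi^\alpha:\xi,\alpha<2^\omega\}$ of almost disjoint columns that are mutually independent modulo finite (essentially a $\mathfrak{c}\times\mathfrak{c}$ independent matrix in the sense of Kunen); your conditions (i) and (ii) do yield a well-defined, continuous and surjective map. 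Note only that the reduction ``every polyadic space of weight $\le 2^\omega$ is a continuous image of $A(2^\omega)^{2^\omega}$'' is itself a theorem (of Gerlits type, cf.\ \cite{gerlits}), not a routine subproduct manipulation.

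The genuine gap is your final paragraph. A transfinite recursion that adds one column at a time ``while preserving positivity of all meets already forced'' is not a ZFC argument: already at the second stage the family of accumulated constraints has cardinality $2^\omega$ (each column has $2^\omega$ members), so at every later stage you must produce a single new almost disjoint family all of whose finite unions split each member of a prescribed family of $2^\omega$ infinite subsets of $\N$. There is no ZFC principle guaranteeing even one common splitter for such a family: it is consistent that the reaping number $\mathfrak{r}$ is smaller than $2^\omega$, i.e.\ that some family of fewer than $2^\omega$ infinite sets admits no single set splitting all of them, and likewise maximal independent families of cardinality $<2^\omega$ exist consistently, so greedy recursions of this kind can die strictly before length $2^\omega$. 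The fact that one can find an almost disjoint or independent family of size $2^\omega$ ``below any infinite residue'' does not help, since you need one family that works simultaneously below all $2^\omega$ residues. You would have to show that the specific constraints generated by your earlier columns remain simultaneously splittable, and that is precisely the content of the theorem; the known ZFC constructions of such matrices avoid recursion altogether and build all columns at once by explicit finite approximations, in the style of the Fichtenholz--Kantorovich--Hausdorff independent family. As it stands, the existence of a family satisfying (i) and (ii) is asserted rather than proved.
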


Note  however  that $C(K)$s may not embed into $l_\infty/c_0$
for some  continuous images $K$ of polyadic spaces by \ref{wuuniforme} and \ref{ugresults}.
This completes our list of absolute results. It follows from our results with C.~Brech from \cite{universal} (see \ref{universal})
that it is consistent that some Banach spaces of density $2^\omega$ do not isomorphically embed in $\ell_\infty/c_0$.
In \cite{universal} we also proved the following:

\begin{theorem}\cite{universal} It is consistent that there exist universal
Banach spaces for $\mathcal B_{2^\omega}$ but $l_\infty/c_0$ is not among them.
\end{theorem}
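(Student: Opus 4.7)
The plan is to force over a ground model $V\models\mathrm{GCH}$ to obtain an extension $V[G]$ in which $2^\omega=\omega_2$, some Banach space $U$ of density $\omega_2$ is isomorphically universal for $\mathcal B_{2^\omega}$, yet $\ell_\infty/c_0$ is not. The approach couples a generic amalgamation argument, which produces $U$, with a preservation argument, which prevents $\ell_\infty/c_0$ from absorbing a carefully chosen witness Banach space $Y_0$.

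First, I would design a proper, $\omega_2$-c.c.\ countable-support iteration $\mathbb{P}$ of length $\omega_2$ whose basic step generically amalgamates pairs of countable rational (i.e., $\Q$-coded) Banach spaces, in the Fra\"iss\'e spirit of \cite{universal}. With appropriate bookkeeping through all rational blueprints, the generic filter yields a Banach space $U$ of density $\omega_2$ in $V[G]$. A standard density-plus-amalgamation argument then shows that every $Y\in\mathcal B_{2^\omega}$ in $V[G]$ embeds isomorphically into $U$: one writes $Y$ as a continuous chain of countable rational subspaces, uses genericity to embed the chain stage-by-stage, and uses amalgamation to ensure the embeddings cohere into a single isomorphic embedding $Y\hookrightarrow U$.

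Second, I would exhibit in $V[G]$ a Banach space $Y_0\in\mathcal B_{2^\omega}$ that does not embed isomorphically into $\ell_\infty/c_0$. By Propositions~\ref{dualities} and \ref{semistone}, any such embedding $Y_0\hookrightarrow C(\N^*)$ would yield, via the adjoint restricted to dual balls, a continuous surjection from a closed subset of $\N^*$ onto $B_{Y_0^*}$. It therefore suffices to choose $Y_0$ — equivalently, its dual ball $K_0:=B_{Y_0^*}$ — so that no such surjection exists in $V[G]$. The iteration $\mathbb{P}$ is tailored to preserve the relevant non-surjection property, for example via preservation of a specific gap in $\wp(\omega)/\mathrm{fin}$ or, more generally, by keeping $K_0$ out of the class of continuous images of closed subsets of $\N^*$ in $V[G]$.

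The main obstacle is to reconcile these two demands within a single forcing: $\mathbb{P}$ must be generic enough that $U$ absorbs every Banach space of density $\omega_2$ — including $Y_0$ itself — yet restrained enough that $\ell_\infty/c_0$ computed in $V[G]$ cannot. The technical heart is therefore a preservation theorem asserting that the amalgamation coordinates do not secretly encode new continuous surjections from $\N^*$ onto $B_{Y_0^*}$. This is carried out via properness of $\mathbb{P}$ and a side-condition argument in the style of \cite{universal}, exploiting the fact that the ``$U$-building'' and the ``$\wp(\omega)/\mathrm{fin}$-preserving'' aspects of the iteration can be arranged to act on essentially orthogonal coordinates of $\mathbb{P}$.
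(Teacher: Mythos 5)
Your reduction of non-embeddability into $\ell_\infty/c_0$ to a non-surjection statement about $\N^*$ is the central gap. You claim that an isomorphic embedding $Y_0\hookrightarrow C(\N^*)$ yields, via the adjoint restricted to dual balls, a continuous surjection from a closed subset of $\N^*$ onto $B_{Y_0^*}$. It does not: the adjoint acts on $B_{C(\N^*)^*}$, which is the ball of Radon measures on $\N^*$, not $\N^*$ itself ($\N^*$ sits inside it only as the set of Dirac deltas). For \emph{isometric} embeddings one can get back to a closed subset of $\N^*$ via Holszty\'nski's theorem (\ref{holsztynski}), but for merely isomorphic embeddings no such reduction is available --- the paper explicitly warns that the isomorphic version of \ref{dualities}(3) fails, and the best one gets is that $B_{Y_0^*}$ is a continuous image of a weak$^*$ compact set of \emph{measures} on $\N^*$. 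Overcoming exactly this obstruction is the technical content of \cite{universal}: the witness there is the concrete space $C([0,2^\omega])$, the model is one where Kunen's theorem applies (the Cohen model, where $[0,2^\omega]$ is not a continuous image of $\N^*$), and the work consists of ``dualizing'' Kunen's argument so that it applies directly to an isomorphic embedding $T:C([0,2^\omega])\to\ell_\infty/c_0$, i.e., to the functionals $T^*(\mu)$ rather than to points of $\N^*$. Your proposal never engages with this step, which is the whole point of the theorem.

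The architecture is also not the paper's and has independent problems. The argument in \cite{universal} is not a tailored iteration balancing ``amalgamation coordinates'' against ``preservation coordinates''; it works in the Cohen extension, where the existence of an (even isometrically) universal member of $\mathcal B_{2^\omega}$ is established separately by amalgamating names over the ccc, homogeneous Cohen forcing, and the non-universality of $\ell_\infty/c_0$ is the Kunen dualization above --- the two phenomena simply coexist in that model, with no preservation theorem needed. In your scheme the universality of $U$ is itself unproved: a Banach space of density $2^\omega=\omega_2$ in $V[G]$ need not be captured by any initial stage of a length-$\omega_2$ countable-support iteration, so ``bookkeeping through all rational blueprints'' does not obviously see all of $\mathcal B_{2^\omega}$; and the preservation theorem that is supposed to reconcile the two halves is asserted rather than proved. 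As it stands the proposal names the right two ingredients at the level of slogans but supplies neither.
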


The proof of the above results consist of showing that $C([0,2^\omega])$
does not embed isomorphically into $l_\infty/c_0$ in a suitable model of set theory dualizing the result of Kunen.

\begin{question} Is it consistent that $[0,2^\omega]$ is not a continuous image of
$\N^*$ but $C([0,2^\omega])$ isomorphically embeds in $\ell_\infty/c_0$?
\end{question}

At the end of \cite{universal} we describe how to construct a zero-dimensional compact $K$
such that $[0,2^\omega]$ is not a continuous image of
$K$ but $C([0,2^\omega])$ isomorphically embeds in $C(K)$ by we do not know the answer
to the following:

\begin{question}Is it consistent that $C([0,2^\omega])$ isomorphically embeds
in $l_\infty/c_0$ but $[0,2^\omega]$ is not a continuous image of $\N^*$?
\end{question}

Let us mention that 
M. Krupski and W. Marciszewski prove in \cite{krupski} 
that consistently there could be a uniform Eberlein compact $K$ such that
$C(K)$ embeds isomorphically in $\ell_\infty/c_0$ but does not embed isometrically.
 There is another example concerning similar issues. In \cite{dowhartmeasure}
it is proved that the Stone space $K$ of the measure  algebra (Boolean algebra of Borel sets of $[0,1]$ divided by
Lebesgue measure sets zero) consistently may not be a continuous image of $\N^*$. However it
is well known that $C(K)$ is isometric to $L_\infty([0,1])$ which is isomorphic to $\ell_\infty$ (by a theorem
of Pe\l czy\'nski) which is isomorphic to a complemented subspace of  $\ell_\infty/c_0$. This generate the following:

\begin{question} Suppose that $K$ is the Stone space  of the  measure  algebra (Borel sets of $[0,1]$ divided by
Lebesgue measure sets zero). Is it consistent that $C(K)$ does not isometrically embed into 
$\ell_\infty/c_0$?
\end{question}

By Holszty\'nski's theorem and the Stone duality
it would be enough to prove that the  measure  algebra does not embed into
any quotient of $\wp(\N)/Fin$.
In \cite{stevoemb} S. Todorcevic presented an axiomatic approach to 
the problem of isomorphic (and in fact isometric) embeddings into $l_\infty/c_0$. To express
these results we need a few definitions: $\wp^n(\kappa)$ denotes the $\sigma$-field 
of sets generated by sets of the form $A_1\times ... \times A_n$ for $A_1, ..., A_n\subseteq \kappa$.
If $E\subseteq \kappa^2$ is a binary relation, then $E^{[n]}=\{(x_1,..., x_n)\in \kappa^n: \forall i< j\ (x_i, x_j)\in E\}$.
We will say that $\kappa$ is an $n$-Kunen cardinal if and only if for every
binary relation $E\subseteq \kappa^2$ the sets $E^{[n]}$ and $(\kappa^2\setminus E)^{[n]}$ can be separated by
an element of $\wp^n(\kappa)$. It can be proved that $2$-Kunen cardinal is a Kunen cardinal in the sense that
$\wp(\kappa\times\kappa)=\wp^2(\kappa)$. More information on Kunen cardinals can be found in \cite{antoniokunen}.
In particular it is known (and implicit in \cite{kunenthesis}) that it s consistent that $2^\omega$ is not $n$-Kunen cardinal for
any $n\in \N$.

\begin{theorem}\cite{stevoemb} If $2^\omega$ is not an $n$-Kunen cardinal for any $n\in\N$, then
there is a first countable compact space $K$ and a Corson compact space $L$ 
such that neither $C(K)$ nor $C(L)$ isomorphically embed into $l_\infty/c_0$.
\end{theorem}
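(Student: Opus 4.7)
The plan is to use the failure of the $n$-Kunen property at $2^\omega$ to block the existence of an isomorphic embedding of $C(K)$ into $\ell_\infty/c_0 = C(\N^*)$. Fix, for each $n\in\N$, a witnessing binary relation $E_n\subseteq (2^\omega)^2$ such that $E_n^{[n]}$ and $((2^\omega)^2\setminus E_n)^{[n]}$ cannot be separated by any set in $\wp^n(2^\omega)$. The first step is to code these relations topologically: build a compact space $K$ of weight $\leq 2^\omega$ whose structure contains, for each $n$, a distinguished family of $n$-tuples of points carrying the information of $E_n^{[n]}$ together with a family carrying $((2^\omega)^2\setminus E_n)^{[n]}$, set up so that continuous functions on $K$ separate analogous pairs of tuples via measures supported on finitely many coordinates. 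For the first countable version, take $K$ to be a suitable Aleksandrov-style compactification (or a graded fan) of a first countable scaffolding on $2^\omega$; for the Corson version, realize the analogous object inside a $\Sigma$-product over a disjoint union of finite powers of $2^\omega$, which is automatically Corson if all supports are countable.

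Second, suppose for contradiction that $T:C(K)\to \ell_\infty/c_0$ is an isomorphic embedding with distortion constant $C$. Lift $T$ to a bounded operator $\tilde T:C(K)\to\ell_\infty$ (using the projectivity features of $c_0$ and the injectivity of $\ell_\infty$, standard in this setting), and read off the coordinate functionals $\mu_k=e_k^*\circ\tilde T\in C(K)^*=M(K)$. By the isomorphism condition, for every $f\in C(K)$ we have $\|f\|$ equivalent (up to $C$) to $\limsup_k|\mu_k(f)|$ modulo finitely many indices. This gives a sequence of Radon measures that asymptotically recover the sup norm on $K$; by standard reductions the supports may be assumed finite with bounded cardinality along subsequences, so for some fixed $n$ the $\mu_k$ are (up to a further controlled perturbation) combinations of at most $n$ point masses.

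Third, pick the index $n$ so that the measures $\mu_k$ effectively produce an element of $\wp^n(2^\omega)$ separating $E_n^{[n]}$ from $((2^\omega)^2\setminus E_n)^{[n]}$: given the coding built into $K$, the Borel set corresponding to $\{k:\mu_k(\,\cdot\,)>0\}$ pulled back through the parametrization yields the forbidden separator, contradicting the choice of $E_n$. Repeat this argument with the Corson realization of $K$ to obtain $L$.

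The main obstacle, which is why the hypothesis must fail at every $n$ simultaneously rather than just $n=2$, is the third step: an isomorphic (as opposed to isometric) embedding only controls norms up to the constant $C$, and the pass from $\mu_k$ to a clean $n$-fold product-measurable separator loses an amount of information proportional to $n$. One must choose $n$ large enough relative to $C$ so that the asymptotic structure of $(\mu_k)_{k\in\N}$ genuinely yields a set in $\wp^n(2^\omega)$; getting this comparison tight, and verifying that the combinatorial coding in $K$ is rigid enough to survive the distortion, is where the bulk of the technical work lies.
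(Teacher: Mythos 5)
The survey states this theorem without proof, citing Todor\v cevi\'c's paper \cite{stevoemb}, so your proposal has to be measured against the argument there. Your overall architecture --- code the witnessing relations $E_n$ into compact spaces, suppose an embedding $T:C(K)\to\ell_\infty/c_0$ exists, extract from $T$ a uniformly bounded norming family of measures, and turn it into a separator of $E_n^{[n]}$ and $((2^\omega)^2\setminus E_n)^{[n]}$ lying in $\wp^n(2^\omega)$ --- is the right one, and your closing observation that the isomorphism constant forces one to work with large $n$ (hence the hypothesis must fail at every $n$) is exactly the correct explanation. But two of your steps would fail as stated. First, the claim that "by standard reductions the supports may be assumed finite with bounded cardinality, so the $\mu_k$ are combinations of at most $n$ point masses" is not a standard reduction and is false: the functionals arising from an embedding into $\ell_\infty/c_0$ are arbitrary Radon measures of bounded norm, possibly diffuse, and the $\limsup$-norming condition does not push them toward atomic ones. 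What replaces this in the actual proof is its measure-theoretic core: for a single Radon measure $\mu$ with $\|\mu\|\le M$, a threshold $\epsilon>0$, and $n$ large relative to $M/\epsilon$, the set of $n$-tuples $(x_1,\dots,x_n)$ with $|\mu(U_{x_1}\cap\dots\cap U_{x_n})|\ge\epsilon$ (the $U_x$ being the basic clopen sets of the coding) lies in $\wp^n(2^\omega)$; this is proved by an inclusion--exclusion/pigeonhole argument (among sufficiently many sets of measure $\ge\epsilon$ in total mass $M$, many must share an intersection of definite measure), and it is precisely here that $n$ gets tied to the constants. You have substituted an incorrect shortcut for this lemma.

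Second, the lifting of $T$ to $\tilde T:C(K)\to\ell_\infty$ via "projectivity of $c_0$ and injectivity of $\ell_\infty$" is not available: $c_0$ is uncomplemented in $\ell_\infty$, and bounded linear liftings through $q:\ell_\infty\to\ell_\infty/c_0$ exist for operators from separable domains (Sobczyk) but not in general from nonseparable $C(K)$, which is the only case of interest here. The correct route is through the adjoint: the measures $T^*\delta_u$ for $u\in\N^*$ form a weak$^*$ compact norming set which is a continuous image of $\N^*$, and obtaining from this the countable sequence $(\mu_k)$ with $\|f\|\le C\limsup_k|\mu_k(f)|$ that the $\sigma$-field argument requires is itself a step needing proof (it is essentially the characterization of embeddability into $\ell_\infty/c_0$ established in \cite{stevoemb}). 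Finally, the spaces $K$ and $L$ and the test functions are left unspecified, whereas the argument hinges on an explicit computation: on the adequate-family Corson compact built from the finite $E_n$-cliques (and its first countable analogue), the products of coordinate projections $f_{\bar x}=\pi_{x_1}\cdots\pi_{x_n}$ satisfy $\|f_{\bar x}\|=1$ for $\bar x\in E_n^{[n]}$ and $f_{\bar x}=0$ for $\bar x$ in the complementary clique set. Without that dichotomy the separator in your third step cannot be produced.
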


The Corson part of the above result was improved in \cite{krupski}
in the following way:

\begin{theorem}\cite{krupski}
If $2^\omega$ is not an $n$-Kunen cardinal for any $n\in\N$, then
there is a uniform Eberlein compact space $K$ 
such that  $C(K)$ does not embed isomorphically  into $l_\infty/c_0$.
\end{theorem}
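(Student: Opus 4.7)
The plan is to mimic Todorcevic's Corson-compact construction from \cite{stevoemb} inside a Hilbert space so that the resulting compactum is uniform Eberlein rather than merely Corson. Fix $\kappa=2^\omega$ and, using the hypothesis, a symmetric relation $E\subseteq\kappa^2$ such that for each $n\in\N$ neither $E^{[n]}$ nor $(\kappa^2\setminus E)^{[n]}$ can be separated by an element of $\wp^n(\kappa)$; such an $E$ is essentially provided by the construction sketched in \cite{kunenthesis} at the end of the previous section. For every finite $E$-clique $s$ (a finite $s\subseteq\kappa$ with $s\times s\setminus\Delta_\kappa\subseteq E$) put $v_s=|s|^{-1/2}\chi_s\in\ell_2(\kappa)$, a unit vector, and let $K$ be the weak closure in $\ell_2(\kappa)$ of $\{0\}\cup\{v_s:s\text{ a finite }E\text{-clique}\}$. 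As a weakly closed bounded subset of a Hilbert space, $K$ is uniform Eberlein of weight at most $\kappa$, hence $K\in\UE_{2^\omega}$.

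Suppose for contradiction that $T\colon C(K)\to\ell_\infty/c_0$ is an isomorphic embedding. Let $f_\alpha\in C(K)$ be the restriction of the $\alpha$th coordinate functional on $\ell_2(\kappa)$, so $f_\alpha(v_s)=|s|^{-1/2}$ when $\alpha\in s$ and $0$ otherwise. By injectivity of $\ell_\infty$ lift each $T(f_\alpha)$ to some $g_\alpha\in\ell_\infty$, uniformly bounded in norm. Because the product $f_{\alpha_1}\cdots f_{\alpha_n}$ evaluated on $v_s$ equals $|s|^{-n/2}$ when $\{\alpha_1,\ldots,\alpha_n\}\subseteq s$ and vanishes otherwise, the embedding $T$ forces the coincidence patterns of the lifts $g_\alpha$ on $\N$ to encode, modulo $c_0$, the $n$-wise $E$-clique structure. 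A pigeonhole on some threshold $\epsilon>0$ and passage to a cofinite $N\subseteq\N$ then yields sets $B_\alpha\subseteq N$ such that, for a fixed $n$, the family $\{B_\alpha:\alpha<\kappa\}$ separates $E^{[n]}$ from its complement via a set in the $\sigma$-field generated by the slices $\{\alpha<\kappa:k\in B_\alpha\}$, $k\in N$. This set, by construction, lies in $\wp^n(\kappa)$ and witnesses that $\kappa$ is $n$-Kunen for this particular $n$, contradicting the choice of $E$.

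The main obstacle is the stabilization step. Because $T$ is merely an isomorphism, the lifts $g_\alpha$ reproduce products only up to a multiplicative distortion bounded away from $0$ and $\infty$, while the Hilbert normalizations $|s|^{-1/2}$ make the values of $f_{\alpha_1}\cdots f_{\alpha_n}$ decay as cliques grow, so one cannot separate all $n$ at once in a single uniform pigeonhole. The resolution is to work with one $n$ at a time: fix $n$, restrict attention to cliques of size exactly $n$ so that the weight becomes the fixed constant $n^{-n/2}$ and does not interfere with thresholding, and then invoke the failure of the $n$-Kunen property for this particular $n$. Making this precise requires adapting the argument of \cite{stevoemb} so that the Hilbert-space normalizations specific to the uniform Eberlein setting do not destroy the combinatorial extraction; this is exactly where the improvement over the Corson case takes place, and where the hypothesis ``not $n$-Kunen for any $n$'' is used in its full strength.
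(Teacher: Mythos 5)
There is a genuine gap, and it sits exactly at the heart of the matter. Note first that the paper itself offers no proof of this statement -- it is a survey citing the result from \cite{krupski} -- so the only question is whether your argument stands on its own. It does not: the entire combinatorial core, namely deriving from an isomorphic embedding $T\colon C(K)\to\ell_\infty/c_0$ a separation of $E^{[n]}$ from $(\kappa^2\setminus E)^{[n]}$ by an element of $\wp^n(\kappa)$, is asserted rather than proved, and your own closing paragraph concedes that ``making this precise requires adapting the argument of \cite{stevoemb}'' and that ``this is exactly where the improvement over the Corson case takes place.'' That adaptation \emph{is} the theorem; what you have written is a plan plus an acknowledgement that the plan's crucial step is open. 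Concretely, the step that fails as written is the claim that $T$ ``forces the coincidence patterns of the lifts $g_\alpha$ to encode, modulo $c_0$, the $n$-wise $E$-clique structure'': $T$ is only a bounded linear embedding, not a lattice or algebra homomorphism, so the products $f_{\alpha_1}\cdots f_{\alpha_n}\in C(K)$ carry no information that passes through $T$ in any controlled way, and nothing in your argument replaces this with an estimate on linear combinations (which is what Todorcevic's argument actually manipulates). The subsequent ``pigeonhole on some threshold $\epsilon$'' therefore has no quantitative input to act on, and the sets $B_\alpha$ are not actually produced.

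Two further points need attention even if the extraction were supplied. First, the hypothesis gives, for each $n$, \emph{some} relation $E_n$ witnessing the failure of the $n$-Kunen property; you use a single $E$ witnessing failure for all $n$ simultaneously, which does not follow from the hypothesis as stated without an additional argument (e.g., passing to a disjoint sum of the $E_n$ and of the corresponding compacta, which would then have to be checked to remain uniform Eberlein and to keep $C(K)$ embeddable as a $c_0$-sum). Second, your own fix for the normalization problem -- fixing $n$ and restricting to cliques of size exactly $n$ -- does not cleanly isolate the value $n^{-n/2}$, since $v_s$ for cliques $s$ with $|s|>n$ containing $\{\alpha_1,\dots,\alpha_n\}$ still give the product the nonzero value $|s|^{-n/2}$, so the thresholding you describe does not separate cliques of size $n$ from larger ones. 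The construction of $K$ itself (weak closure of the normalized clique indicators in $\ell_2(\kappa)$) is a sensible candidate and does yield a uniform Eberlein compact of weight at most $2^\omega$, so the setup is reasonable; but as a proof the proposal is incomplete where completeness matters most.
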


This is strictly related to \ref{polyadic} and \ref{wuuniforme}, \ref{bellpolyadic}, i.e., the gap between what
always can be embedded in $l_\infty/c_0$ and what consistently cannot be embedded is quite tight.
It is also related to results of \cite{ug} (see \ref{ugresults}).
  We note that \cite{mirnalajos} also contain consistency results improving the results of \cite{universal} to
Corson compacta and use a more technical approach than in \cite{stevoemb}. 
 We close this section by the following:

\begin{question} Is it consistent that $\N^*$ is not universal for $\K_{2^\omega}$ but
$\ell_\infty/c_0$ is universal for $\mathcal B_{2^\omega}$?
\end{question}

\bibliographystyle{amsplain}

\end{document}